\newcommand{\bs}{\boldsymbol}
\newcommand{\diver}{\operatorname{div}}
\newcommand{\dd}{\,\mathrm{d}}
\newcommand{\CD}{C_{\mathfrak D}}
\newcommand{\weak}{\rightharpoonup}
\numberwithin{equation}{section}
\newtheorem{theorem}{Theorem}[section]
\newtheorem{proposition}[theorem]{Proposition}
\newtheorem{lemma}[theorem]{Lemma}
\newtheorem{corollary}[theorem]{Corollary}
\newtheorem{remark}[theorem]{Remark}
\newtheorem{definition}[theorem]{Definition}
\newcommand{\eps}{\varepsilon}
\newcommand{\bfV}{{\bs V}}
\newcommand{\bfH}{{\bs H}}
\newcommand{\rr}{{\overline s}}
\newcommand{\sss}{{\overline\tau}}
\newcommand{\boxlength}{a}
\newcommand{\INTinf}{{]0,\infty[}}
\newcommand{\INTell}{{]0,\boxlength[}}
\newcommand{\INTT}{{]0,T[}}
\renewcommand{\kappa}{\varkappa}
\definecolor{myred}{RGB}{255,0,40}
\definecolor{mygreen}{RGB}{30,150,30}
\definecolor{bothcolor}{RGB}{255,0,255}
\begin{document}

\title{On the existence of global-in-time weak solutions\\
     and scaling laws for \\
     Kolmogorov's two-equation model for turbulence%
     \thanks{A.M. was partially supported by Deutsche
      Forschungsgemeinschaft (DFG) via  SFB\,910 \emph{Control of
      Self-Organizing Nonlinear Systems} (project no.\ 
    163436311), subproject A5 ``Pattern
    formation in coupled parabolic systems''.} 
}


\author{\large  Alexander Mielke$^{a,b}$ and\  Joachim Naumann$^b$\\[0.3em]
 {\normalsize \begin{tabular}{c@{\,}p{11cm}}
 $^a$ & Weierstra\ss{}-Institut f\"ur Angewandte Analysis und
    Stochastik\\
 $^b$& Institut f\"ur Mathematik, Humboldt-Universit\"at zu Berlin
 \end{tabular}}\\
 {\small \texttt{alexander.mielke@wias-berlin.de}, 
  \texttt{jnaumann@math.hu-berlin.de}} 
}



\date{Revision of 3. June 2022}

\maketitle

{\small\noindent\textbf{Abstract.}
  This paper is concerned with Kolmogorov's two-equation model for turbulence
  in $\mathbb{R}^3$ involving the mean velocity $\bs{u}$, the pressure $p$, an
  average frequency $\omega>0$, and a mean turbulent kinetic energy $k$. We
  consider the system with space-periodic boundary conditions in a cube
  $\Omega=\big(\INTell\big){}^3$, which is a good choice for studying the decay
  of free turbulent motion sufficiently far away from boundaries. In
  particular, this choice is compatible with the rich set of similarity
  transformations for turbulence.

  The main part of this work consists in proving existence of global
  weak solutions of this model. For this we approximate the system by
  adding a suitable regularizing $r$-Laplacian and invoke existence
  result for evolutionary equations with pseudo-monotone operators. An
  important point constitutes the derivation of pointwise a priori 
  estimates for $\omega$ (upper and lower) and $k$ (only lower) that
  are independent of the box size $\boxlength$, thus allow us to control the
  parabolicity of the diffusion operators.
}




\section{Introduction}\label{s1}

In 1942, A.N. Kolmogorov (see \cite{Ref14} and
\cite[pp.\,214--216]{Ref25} for an English translation) postulated the
following system of PDEs as a model for the isotropic homogeneous
turbulent motion of an incompressible fluid  $(x,t)\in \mathbb{R}^3\times
\INTinf$:
\begin{subequations}
\label{eq:I.1-4}
\begin{align}
 \label{1.1}
 \diver \bs{u}&=0 \;,
\\
 \label{1.2}
 \frac{\partial\bs{u}}{\partial t}+(\bs{u}\cdot\nabla)\bs{u}
  &=\nu_0\diver\Big(\frac{k}{\omega}\,\bs{D}(\bs{u})\Big)-\nabla
  p+\bs{f},
\\[1mm] 
 \label{1.3}
 \frac{\partial \omega}{\partial t}+\bs{u}\cdot\nabla\omega
  &= \nu_1 \diver\Big(\frac k \omega\,\nabla\omega\Big)- \alpha_1
  \omega^2,
\\[1mm]
 \label{1.4}
 \frac{\partial k}{\partial t}+\bs{u}\cdot\nabla k
  &= \nu_2 \diver\Big(\frac k\omega \,\nabla k\Big)+ \nu_0\frac
  k\omega \,\big|\bs{D}(\bs{u})\big|^2-\alpha_2 k\omega. 
\end{align}
\end{subequations}
Throughout the paper, bold letters denote functions with values in
$\mathbb{R}^3$ or $\mathbb{R}^9$ as well as normed spaces of such
functions. Here, the unknowns have the following physical
meaning: 
\begin{align*}
 \bs{u}& \ \text{ is the velocity of the mean flow,}\\
 p& \ \text{ is the average of the pressure,}\\
 \omega&\ \  \text{\parbox[t]{8.5cm}{is the average of the frequency associated
     with the turbulent kinetic energy,}}\\ 
 k& \ \text{ is the mean turbulent kinetic energy.} 
\end{align*}
The velocity field $\bs{v}$ of the fluid motion is given by
$\bs{v}=\bs{u}+\tilde{\bs{u}}$, where $\tilde{\bs{u}}$ denotes the
turbulent fluctuation velocity, such that the scalar $k$
is the time average  $\overline{\frac 12
\,|\tilde{\bs{u}}|^2}$. Further,
\begin{align*}
 & \nu_0,\ \nu_1,\ \nu_2 >0 \text{ and } \alpha_2,\ \alpha_1 >0 
    \text{ are dimensionless constant;} \\
 &\bs{f} \text{ is a given averaged external force,}\\
 &\bs{D}(\bs{u})=\frac12\big(\nabla\bs{u}+(\nabla\bs{u})^\top\big) 
 \text{ is the mean strain-rate tensor}.
\end{align*}
The function $\nu_0\frac k\omega$ denotes the kinematic eddy viscosity, while
$\nu_1 \frac k\omega$ and $\nu_2\frac k\omega$ denote the corresponding
diffusion constants for the scalars $\omega$ and $k$. The constants
$\nu_0,\ \nu_1,\ \nu_2 >0$ and $\alpha_2,\ \alpha_1 >0 $ in 
\eqref{eq:I.1-4} are related to the constants $A,\,A',\,A''$ \cite{Ref14}
(cf.\ also \cite[p.\.213]{Ref25} where $b=\frac23k$) as follows:
\begin{equation}
  \label{eq:KolmoValues}
  \nu_0=\frac43 A, \quad \nu_1=\frac23 A',\quad \nu_2 = \frac23 A'', \quad
\alpha_1=\frac7{11}, \quad \alpha_2=1.
\end{equation}
In Section \ref{s2} we discuss the scaling properties of the two-equation
model \eqref{eq:I.1-4} with the special viscosities ``$\nu_j\, k/\omega$'' and
loss terms ``$\alpha_1\omega^2$'' and ``$\alpha_2\,k\omega$''. These specific
choices of power-law nonlinearities relate to specific scaling laws in free
turbulence. In \cite{Ref14}, there is no indication why the particular
values of $\alpha_1$ and $\alpha_2$ were chosen.

Since the numerical values of $\nu_1$ and $\nu_2$ 
are not relevant for the existence theory of weak solutions for
\eqref{eq:I.1-4} we are going to develop below, we assume them
to be equal to 1. A detailed discussion of the numerical values of
closure coefficients and their role in turbulence modeling can be
found, e.g., in \cite{Ref2} and \cite[Chap.\,4.3.1]{Ref29}. 
However, we keep the coefficient $\nu_0$ to emphasize that the viscous
dissipation generated by the viscous term in \eqref{1.1} is feeding
into the mean turbulent kinetic energy, see the second last term in
\eqref{1.4}. Hence, for sufficiently smooth solutions we have the
formal energy relation
\begin{equation}
  \label{eq:I.Energy}
  \frac{\mathrm d}{\mathrm d t} \int_{\mathbb R^3} \Big(
\frac12|\bs{u}|^2 + k\Big) \dd x =  \int_{\mathbb R^3}\Big(\bs f \cdot
\bs u -\alpha_2 \omega k \Big) \dd x ,
\end{equation}
where the first term on the right-hand side gives the power of the
external forces, while the second term is Kolmogorov's way of modeling
dissipative losses, e.g.\ through thermal radiation.  We refer to
\cite{ObeBus02TT, ChaLew14MNFT} for general issues in turbulent
modeling, in particular to \cite[Ch.\,7+8]{ChaLew14MNFT} for the
mathematical analysis of the NS-TKE model (Navier-Stokes equation with
Turbulent Kinetic Energy), where the equation \eqref{1.3} for $\omega$
is absent and the energetic losses in \eqref{1.4} are modeled via
$k^{3/2}/\ell$ with a suitable mixing length $\ell$ 
instead of $\alpha_2k\omega$ (see
e.g.\ \cite[Eqn.\,(4.137)]{ChaLew14MNFT}.

System \eqref{eq:I.1-4} is an outgrowth of A.N. Kolmogorov's
theory of turbulence published in a series of papers in
1941. Comprehensive presentations of this theory can be found, e.g.,
in \cite{Ref10} and \cite[Vol.\,I, Chap.\,6.1,\,6.2; Vol.\,II,
Chap.\,8]{Ref21} (see also the article \cite[pp.\,488--503]{Ref27}). The
function $L=\frac{k^{1/2}}\omega$ (``external length scale'' or ``size
of largest eddies'') plays an important role for the study of the
energy spectrum of the turbulence (see \cite[Chap.\,33]{Ref15},
\cite[Chap.\,8.1]{Ref29}). A review of the work of A.N. Kolmogorov and the
Russian school of turbulence can be found in \cite{Ref30}. This paper
contains also some remarks about a possibly ``missing source term'' in
\eqref{1.3} (cf.\ \cite[p.\,212]{Ref25}).

A profound discussion of the mathematical background of
Obukhoff--Kolmogorov's spectral theory of turbulence (K41-functions,
bounds for the energy spectrum for low and high frequencies) is given
in \cite{Ref28}. 

In \cite{Ref6}, the authors study system \eqref{eq:I.1-4}  in
$\Omega\times\,]\,0,T\,[\,$, where $\Omega\subset\mathbb{R}^3$ is a
bounded $C^{1,1}$ domain, with mixed
boundary conditions for $\omega$ and $k$, the condition
$\bs{u}\cdot\bs{n}=0$ and a condition for the normal traction of the
tensor $-p\bs{I}+\nu_0\,\frac{k}{\omega}\,\bs{D}(\bs{u})$ on
$\partial\Omega\times\INTT$. Under these boundary
conditions, system \eqref{eq:I.1-4}  characterizes a
wall-bounded turbulent motion, i.e., turbulence is generated at the
Dirichlet part of the boundary. The authors complete this
boundary value problem by the initial conditions \eqref{1.6} and  
prove the existence of a weak solution by combining a truncation
method and the Galerkin approximation. Wall-generated turbulence is 
an important topic in engineering applications where two-equation models,
including the $k\,$-\,$\eps$ model, are heavily used, see
\cite{ChaLew14MNFT} and the references there.\smallskip 

The emphasis of this paper is quite different as we are interested in free
turbulence (also called isotropic or homogeneous turbulence) that develops
far away of the boundary and is rather governed by suitable scaling 
symmetries in the sense of \cite{Ober02SIST} and \cite{KlObPl20STM}. 
In \cite{{Ref14}} Kolmogorov writes about the derivation of his model:
``\emph{We may submit to a rather less complete mathematical investigation the
  turbulent motion which is homogeneous and isotropic (in all scales), and from
  which mean flow is absent; such a flow decays continuously with time. \ldots\
  Starting from the above local properties of turbulence (and with the help of
  some more coarsely approximate assumptions), we may construct the following
  complete system of equations to describe turbulent motion:}'' and then he
states his two-equation model (cited from English translation in \cite{Ref25}).

To preserve these similarity transforms we avoid boundaries and use periodic
boundary conditions and on a cube size with side length $\boxlength$, that can
be chosen much larger than the structures under consideration.  A bonus of the
scaling invariance of \eqref{eq:I.1-4} for $\bs f\equiv 0$ is the existence of
a rich class of similarity solutions.  Compatible with the periodic boundary
conditions we have the following explicit spatially constant solutions
\begin{equation}
  \label{eq:Solution}
  \bs u\equiv \bs u_\circ, \quad p\equiv 0, \quad
  \omega(t)=\frac{\omega_\circ}{ 1{+}\alpha_1\omega_\circ t}, 
  \quad 
  k(t)= \frac{k_\circ}{(1{+}\alpha_1 \omega_\circ t)^{\alpha_2/\alpha_1}},
\end{equation}
i.e.\ the mean turbulent kinetic energy decays like
$t^{-\alpha_2/\alpha_1}$, if there is no feeding through macroscopic
viscous dissipation. Indeed, independent of $\bs u$ and $k$, the equation
\eqref{1.3} for $\omega$ can always be solved by the spatially constant
solution $\omega(x,t)= \omega_\circ/(1{+}\alpha_1 \omega_\circ t)$. The
occurrence of asymptocially self-similar behavior for $\Omega = \mathbb R^d$ for a
closely related, but much simpler coupled system (obtained by replacing the
Navier-Stokes equation by a scalar equation for shear flows and neglecting
lower order terms) is discussed in \cite{Miel21?TCDP}. 

To show the effect of energy feeding from viscous dissipation
into the turbulent kinetic energy $k$ via the source term $\nu_0\frac
k\omega |\bs D(\bs u)|^2$ we can look at the following family of exact
shear flow solutions: 
\begin{align}
\label{eq:DecaySol}
\bs u(x,t)= \frac{U}{1{+}\alpha_1 \omega_\circ t} 
 \begin{pmatrix} \sin(\lambda x_3)\\ \cos(\lambda x_3)\\ 0 
   \end{pmatrix}, \quad 
 \omega(x,t)= \frac{\omega_\circ}{1{+}\alpha_1\omega_\circ t} , \quad 
 k(x,t)= \frac{k_\circ}{(1{+}\alpha_1\omega_\circ t)^2}. 
\end{align}
with $p\equiv 0$, where the positive constant parameters $\omega_\circ$,
$k_\circ$, $\lambda$, and $U$ are related by 
\[
U^2 = \frac{\alpha_2-2\alpha_1}{\alpha_1} \, k_\circ \quad \text{ and }
\quad \lambda^2 = \frac{2\alpha_1}{\nu_0}\, \frac{\omega_\circ^2}{k_\circ}\,.
\]
These solutions only exist for the case $\alpha_2/\alpha_1>2$, and
thus the decay of $k$ like $1/t^2$ is slower than
$1/t^{\alpha_2/\alpha_1}$ in \eqref{eq:Solution}, because of the
spatially constant 
source term $\nu_0\frac
k\omega |\bs D(\bs u)|^2 = \alpha_1 \omega_\circ U^2
(1{+}\alpha_1\omega_\circ t)^{-3}$. As in \cite{Ober02SIST} these invariant
solutions exist because of the scaling symmetries, and moreover they
are indeed compatible with period boundary conditions if $\lambda
\boxlength \in 2\pi \mathbb N$. For a given $\boxlength$ we find
infinitely many solutions by choosing $\lambda_n=2\pi n/\boxlength$ and
suitable $k_\circ$ and $\omega_\circ$. This also highlights the fact
that there are no uniform compactness properties unless we prescribe
a lower bound for $k$. 

In place of $\mathbb{R}^3\times \INTinf$, in the present
paper we study system \eqref{eq:I.1-4}  in the space-time
cylinder $Q=\Omega \times \INTT$, where
$\Omega=\big(\INTell\big)^3$ with $T,\boxlength>0$ arbitrary but
fixed. To implement periodic boundary 
conditions we interpret $\Omega$ as a torus by identifying the
opposite sides. If $\partial\Omega$ denotes the boundary of the cube
$\Omega \subset \mathbb{R}^3$ we set 
\[
 \Gamma_i=\partial\Omega\cap\{x_i=0\},\quad
 \Gamma_{i+3}=\partial\Omega\cap \{x_{i}=\boxlength \} \quad \text{
   for } i=1,2,3,  
\]
and complement \eqref{eq:I.1-4}  with periodic boundary
conditions and initial conditions as follows: 
\begin{subequations}
\label{eq:I.IBC}
\begin{align}
 \label{1.5}
& \left.\begin{array}{@{}l}
        \bs{u}\big|_{\Gamma_i\times\,]\,0,T\,[}=\bs{u}\big|_{\Gamma_{i+3}\times\,]\,0,T\,[\,},
        \qquad\qquad\text{ analogously for   }\; p,\omega,k,\\[2mm]
        \bs{D}(\bs{u})\big|_{\Gamma_i\times\,]\,0,T\,[\,}=\bs{D}(\bs{u})
        \big|_{\Gamma_{i+3}\times\,]\,0,T\,[\,},
        \ \text{ analogously for }\; \nabla\omega, \nabla k\\[2mm]
        \text{for }i=1,2,3;
       \end{array}\right\}\\[1.5mm]
\label{1.6}
&\;\bs{u}=\bs{u}_0,\;\; \omega=\omega_0, \;\; k=k_0\;\text{ in }\; \Omega\times\{0\}.
\end{align}
\end{subequations}
Initial/boundary-value problem \eqref{eq:I.1-4}  and 
\eqref{eq:I.IBC}  characterizes a turbulent motion of an
incompressible fluid in $Q$ that evolves from
$\{\bs{u}_0,\omega_0,k_0\}$ at time $t=0$. We assume the pressure
to be periodic thus avoiding additional pressure gradients that might occur
when assuming that $\nabla  p$ is periodic only. As a consequence the mean
flow $\boxlength^{-3} \int_\Omega \bs u(x,t) \,\dd x $ is constant, when assuming
$\bs f\equiv 0$, cf.\ \cite{ChoIoo94CTP, KagWah97ECCR}. The usage of
periodic boundary conditions is common in theoretical investigations of the
Navier-Stokes equations and modeling of free turbulence, see e.g.\ 
\cite{FMRT01NSET,LayLew03SSSS,Ref10,Lewa06VLES,LedLew07RANS,Ref28}. 

On physical grounds, the size $\boxlength$ of the underlying cube $\Omega$
should be greater than certain quantities of the turbulent motion. A
detailed discussion of this aspect is given in \cite[pp.\,25--26,\,%
424--435]{Ref8} (cf.\ also item 2$^\circ$ below). This is one of the main
reasons why we consider a cube $\Omega$ of side length $\boxlength$ and
periodic boundary conditions which provides an analysis that is
completely independent of $\boxlength$. In particular, we can choose $\boxlength$
much bigger than the ``external length scale''
$L(x,t) := k(x,t)^{1/2} /\omega(x,t)$.

Our proof of the existence of weak solutions of
\eqref{eq:I.1-4}  and \eqref{eq:I.IBC},  which has been
already sketched in \cite{Ref20}, is entirely independent of the discussion in
\cite{Ref6}. More specifically, the basic aspects of our paper are:
\begin{itemize}
\item[$1^\circ$] In Section \ref{s3} we introduce the notion of weak
  solution $\{\bs{u},\omega,k\}$ with defect measure $\mu$ for
  \eqref{eq:I.1-4}  and \eqref{eq:I.IBC}. This notion
  leads to a balance law for $\int_\Omega k(x,\cdot)\dd x$ and
  gives a connection between the energy equality for $\frac
  12\int_\Omega\big|\bs{u}(x,\cdot)\big|^2\dd x$ and the
  vanishing of $\mu$, cf.\ Proposition \ref{pr:3.2} which states that
  \eqref{eq:I.Energy} holds if $\mu = 0$.
 
 \item[$2^\circ$] In Section \ref{s4} we present our existence theorem
   for weak solutions $\{\bs{u},\omega,k\}$ with defect measure
   $\mu$. Based on comparison arguments 
   with the explicit solution in \eqref{eq:Solution} our
   solutions $\{\bs u,\omega,k\}$ satisfy, for a.a.\ $(x,t)\in \Omega \times \INTT$, 
\begin{equation}
 \label{eq:I.comparison}
 \frac{\omega^*}{1{+}\alpha_1\omega^* t} \geq \omega(x,t) 
  \geq \frac{\omega_*}{1{+}\alpha_1\omega_* t} 
 \quad \text{and} \quad 
  k(x,t) \geq \frac{k_*}{(1{+}\alpha_1\omega_* t)^{\alpha_2/\alpha_1}}
  \,, 
\end{equation}
if the initial conditions in \eqref{1.6} satisfy the corresponding
estimates at $t=0$. It is important to preserve these estimates even
through the necessary approximations, since that provide a lower bound for the
diffusion coefficients $k/w$ in the three evolution equations. 

\item[$3^\circ$] Moreover, the bounds in \eqref{eq:I.comparison} provide
  a physically relevant lower bound for 
Kolmogorov's external length scale $L=k^{1/2}/\omega$, namely 
  \[
    L(x,t)=\frac{k(x,t)^{1/2}}{\omega(x,t)} \ge c\,(1{+}t)^{1-\alpha_2/(2\alpha_1)}\quad 
     \text{ for all } t\in[0,T\,],
  \]
 where $\alpha_2$ and $\alpha_1$ are from  \eqref{1.3} and \eqref{1.4}, and 
 where $c=\text{const}>0$ neither depends on $\boxlength$ nor on $T$
 (cf.\ Corollary~\ref{c3.1} in  Section \ref{s4}). Using A.N. Kolmogorov's
 values from \eqref{eq:KolmoValues} we have $\alpha_2/\alpha_1= {11}/7$ and 
 $L$ {\it grows at least as } $t^{3/14}$, which compares well to $t^{2/7} $
 mentioned in \cite{Ref14}). 

\item[$4^\circ$] The proof of our existence theorem is given in
  Section \ref{s5}. It is based on the existence of an approximate
  solution $\{\bs{u}_\eps,\omega_\eps,k_\eps\}$
  (without defect measure) of \eqref{eq:I.1-4}  and
  \eqref{eq:I.IBC},  establishing 
  a-priori estimates independently of $\eps$ and then carrying
  out the limit passage $\eps\to 0$. The existence of the
  approximate solutions is obtained by applying an abstract existence
  results for evolutionary equations with pseudo-monotone operators
  from \cite[Thm.\,8.9]{Roub13NPDE}, see Appendix \ref{s.A} for the
  details. 

\item[$5^\circ$] Our approach is easily adaptable to more
  general domains with suitable boundary conditions, and to 
  the full-space $\mathbb R^d$ with general $d\in \mathbb N$. However,
  for notational convenience and physical relevance we restrict
  ourselves to $d=3$ and the spatially periodic case.

\item[$6^\circ$] In \cite{Lewa97MACT} a simplified one-equation model
    of turbulence is studied, where a defect measure appears as well
    (see the pages 397 and 416 there). Weak solutions for the full
    one-equation model were obtained in \cite{BuLeMa11}.   
\end{itemize}

The parallel work in \cite{Ref6} developed completely independently to the
present work, which had its origin in \cite{Ref20}. The former work is based on
an intricate Galerkin approximation with several regularization parameters and
is devoted to the case of bounded domains with nontrivial (even non-smooth)
boundary conditions that can trigger the generation of turbulence. For the
initial condition $k_0:= k(\cdot,0)$ we rely on the stronger assumption
$k_0(x)\geq k_*>0$ to obtain the very explicit lower bound for $k(x,t)$ in
\eqref{eq:I.comparison} that is independent of the domain size
$\boxlength$.  In \cite{Ref6} it is sufficient to assume the 
much weaker condition $\min\{0,\log k_0\} \in L^1(\Omega)$, but estimates are
given in terms of domain-dependent constants.   Moreover, \cite{Ref6} has a
\emph{stronger notion} of solution that additionally guarantees the validity of
a local balance equation for the total energy density
$E(x,t)=k(x,t)+\frac12|\bs u(x,t)|^2$, see Remark \ref{rm:TotalEnergy} and
relation \eqref{eq:BuMa.TotalEnergy} there.  

In subsequent work we will investigate similarity solutions that are
induced by the scaling laws discussed in Section \ref{s2}. The most
challenging question will be the derivation of suitable solution
concepts that allow the turbulent kinetic energy $k$ to vanish on parts of
the domain. This would allow us to study the predictions of  the
Kolmogorov model \eqref{eq:I.1-4} in which way turbulent regions invade
non-turbulent regions.

\section{Scaling laws and similarity}\label{s2}
\setcounter{equation}{0}

We consider the free turbulent motion of an incompressible fluid in
$\mathbb{R}^3\times\INTinf$ which is governed by the
following system of PDEs (note that $\bs f\equiv 0$):
\begin{subequations}
\label{eq:n2} 
\begin{align}
 \label{n2.1}
 \diver\bs{u}&=0,\\
 \label{n2.2}
 \frac{\partial\bs{u}}{\partial t}+(\bs{u}\cdot\nabla)\bs{u}
 &=\diver\big(d_1(\omega,k)\bs{D}(\bs{u})\big)-\nabla p,\\
 \label{n2.3}
 \frac{\partial\omega}{\partial t}+\bs{u}\cdot\nabla\omega
 &=\diver\big(d_2(\omega,k)\nabla\omega\big)-g_2(\omega,k)\omega,\\
 \label{n2.4}
 \frac{\partial k}{\partial t}+\bs{u}\cdot\nabla k
&=\diver\big(d_3(\omega,k)\nabla
k\big)+d_1(\omega,k)\big|\bs{D}(\bs{u})\big|^2-g_3(\omega,k)k, 
\end{align}
\end{subequations}
where $\bs{u}$, $p$, $\omega$ and $k$ are the unknowns, and 
\begin{align*}
 d_i: \big(\INTinf\big)^2&\longrightarrow\
 \INTinf\qquad(i=1,2,3),\\ 
 g_m: \big(\INTinf\big)^2&\longrightarrow \
 \INTinf\qquad(m=2,3) 
\end{align*}
are given coefficients. The coefficient $d_1(\omega,k)$ represents a
``generalized'' viscosity of the fluid. System
 \eqref{eq:n2}  obviously includes Kolmogorov's
two-equation model \eqref{eq:I.1-4}  with
\begin{align*}
 & {\displaystyle d_1(\omega,k)=\nu_0\frac k\omega,\quad
   d_2(\omega,k)= \nu_1\frac k\omega, \quad 
   d_3(\omega,k)=\nu_2 \frac k\omega,}&\\[2mm]
 &g_2(\omega,k)=\alpha_1\omega,\quad g_3(\omega,k)=\alpha_2 \omega.&
\end{align*}
We want to show that these choices are special, because they give a
richer structure of scaling invariances than arbitrary nonlinear
functions. In particular, they respect the classical Reynolds symmetry
(see \cite[Sec.\,3.3]{ChaLew14MNFT}), but go one step beyond because
the viscosities $d_j(\omega,k)$ also have scaling properties. We refer
to \cite{Bare93SLFD, Ober02DEIT, Ober02SIST} where the importance of scaling
symmetries for the modeling of free turbulence is discussed.

Let $\{\bs{u},\omega,k\}$ be a classical solution of \eqref{eq:n2}
that has a suitable decay for $|x|\to \infty$ such that the following
integrals over $\mathbb R^3 $ exist. We multiply \eqref{n2.2} by
$\bs{u}$, integrate by parts over $\mathbb R^3 $, integrate
\eqref{n2.4} over $\mathbb R^3 $, and add the equations obtained. This
gives the energy balance
\begin{equation}
 \label{n2.5}
 \frac{\mathrm d}{\mathrm d t}\int_{\mathbb{R}^3} 
\Big(\frac 12|\bs{u}|^2+k\Big)\dd x=
 -\int_{\mathbb{R}^3}g_3(\omega,k)k\dd x,\quad t\in\INTinf,
\end{equation}
cf.\  Proposition \ref{pr:3.2} in Section \ref{s4}.
\medskip

We are now studying the invariance of $\{\bs{u},\omega,k\}$ under the
scaling 
\begin{equation}
 \label{n2.6}
 \partial_t\mapsto\alpha\partial_t,\quad \partial_{x_j}\mapsto
 \beta\partial_{x_j},\quad\bs{u}\mapsto
 \gamma\bs{u},\quad\omega\mapsto\rho\omega,\quad k\mapsto\sigma k, 
\end{equation}
where
$(\alpha,\beta,\gamma,\rho,\sigma)\in\big(\,]\,0,+\infty\,[\,\big)^5$. Here,
the pressure $p$ is omitted, for it can be always suitably scaled. In
addition to the well-known scaling laws for the Navier-Stokes
equations, the scaling \eqref{n2.6} have to leave invariant the
coefficients $d_i(\omega,k)$ and $g_m(\omega,t)$ for $i=1,2,3$ and $m=2,3$,
too.

To this end, we consider the following conditions for the family of parameters $(\alpha,\beta,\gamma,\rho,\sigma)$ and the coefficients $d_i$ and $g_m$:
\begin{align}
\label{n2.7}
\alpha=\beta\gamma,\qquad \sigma=\gamma^2,\qquad\qquad\qquad\qquad\\[3mm]
 \label{n2.8}
 \forall \; \omega,k >0:\ \left\{ 
   \begin{array}{l}    
        \beta^2d_i(\rho\omega,\sigma k)=\alpha d_i(\omega,k),\quad
        i=1,2,3,
   \\[1mm]
        g_m(\rho\omega,\sigma k)=\alpha g_m(\omega,k),\quad m=2,3.
       \end{array}\right.
\end{align}
The first condition in \eqref{n2.7} implies the invariance of the
convective derivative $\partial_t+\bs{u}\cdot\nabla$ under
\eqref{n2.6}, while the second condition implies that $|\bs{u}|^2$ and
$k$ have the same scaling property which is necessary for the
conservation law \eqref{n2.5} to hold. It is now easy to see that
system  \eqref{eq:n2}  is invariant under the scaling laws
\eqref{n2.6} if the conditions \eqref{n2.7} and \eqref{n2.8} hold.

In order to relate the present discussion to Kolmogorov's two-equation
model \eqref{eq:I.1-4}  we make an ``ansatz'' for the parameter
$\beta$ as well as for the coefficients $d_i$ and $g_m$. For
$(\gamma,\rho),(\omega,k)\in\big(\INTinf\big)^2$ define
\begin{equation}
 \label{n2.9}
 \beta=\rho^A\gamma^{1-2B}
\end{equation}
\begin{equation}
 \label{n2.10}
 d_i(\omega,k)=D_i\omega^{-A}k^B,\quad g_m(\omega,k)=G_m\omega^A k^{1-B},
\end{equation}
where $D_i$, $G_m$ ($i=1,2,3$; $m=2,3$) and $A$, $B$ are arbitrary
positive constants. Condition \eqref{n2.9} is equivalent to
\[
 \frac\beta\gamma\,\rho^{-A}\gamma^{2B}=1\quad\text{ resp. }\quad \frac1{\beta\gamma}\,\rho^A\gamma^{2(1-B)}=1.
\]
Observing \eqref{n2.7}, it is readily seen that $d_i$ and $g_m$ as in
\eqref{n2.10} obey the scaling conditions \eqref{n2.8} for all choices
of $D_i$, $G_m$, $A$, and $B$.

Finally, let $A=B=1$ in \eqref{n2.9} and \eqref{n2.10}, i.e.\ $g_m$ does
not depend on $k$. Then  we obtain
\[
 d_i(\omega,k)=D_i\frac k\omega,\quad g_m(\omega,k)=G_m\omega\quad (i=1,2,3; \; m=2,3).
\]
Hence, Kolmogorov's two-equation model of turbulence, which is
obtained for $D_i=\nu_{i-1}$, $G_2=\alpha_1$, and $G_3=\alpha_2$, is
invariant under the scaling \eqref{n2.6} with the two-parameter
family 
\begin{equation}
  \label{eq:2.11}
   (\rho,\gamma)\mapsto (\alpha,\beta,\gamma,\rho,\sigma)
   =\Big(\rho,\frac \rho\gamma,\gamma,\rho,\gamma^2\Big).
\end{equation}

\section{Definition of weak solutions}\label{s3}

We begin with introducing notations that will be used throughout the
paper.

Let $X$ denote any real normed space with norm $|\cdot|_X$, and let
$\langle x^*,x\rangle_X$ denote the dual pairing of $x^*\in X^*$ and $x\in
X$. By $L^p(0,T;X)$ ($1\le p\le+\infty$) we denote the vector space of all
equivalence classes of Bochner measurable mappings $u:[\,0,T\,]\to X$ such that
\[
  \|u\|_{L^p(0,T;X)} = \left\{ 
   \begin{array}{l@{\quad\text{if }\;}l}
     {\Big(\int_0^T\big|u(t)\big|_X^p\dd t\Big)^{1/p}}& 1\le p<+\infty,
     \\
     \mathop{\mathrm{ess\,sup}}\limits_{t\in[0,T\,]}\big|u(t)\big|_X&p=+\infty
   \end{array}
  \right.
\]
is finite (see e.g.\ \cite[Chap.\,III, \S3, Chap.\,IV, \S3]{Ref4},
\cite[App.]{Ref5} and \cite{Ref9} for details).
Let $\Omega\subseteq\mathbb{R}^N$ ($N\ge 2$) be any open set, and let
$Q=\Omega\times\INTT$ for $T>0$. For $1\le p<\infty$
and $u\in L^p(Q)$ define
\[
 [u](t)(\cdot)=u(\cdot,t)\quad\text{for a.a. }\: t\in[\,0,T\,].
\]
By Fubini's theorem, the function
$t\mapsto\int_\Omega\big|u(x,t)\big|^p\mathrm{d}x$ is in $L^1(0,T)$
and there holds 
\[
 \int_0^T\big\|[u](t)\big\|_{L^p(\Omega)}^p\dd t=\int_Q 
   \big|u(x,t)\big|^p\dd x\dd t.
\]
An elementary argument shows that the mapping $u\mapsto[u]$ is a
linear isometry of $L^p(Q)$ onto
$L^p\big(0,T;L^p(\Omega)\big)$. Therefore, these spaces will be
identified in what follows. By $W^{1,p}(\Omega)$ we denote the usual
Sobolev space, and we set $\bs{W}^{1,p}(\Omega) =
\big(W^{1.p}(\Omega)\big)^N$.  \smallskip

Unless otherwise stated, from now on let $\Omega=\big(\INTell\big)^3$
denote the cube introduced in Section~\ref{s1}. We define 
\begin{align*}
  & W_{\mathrm{per}}^{1,p}(\Omega) =\big\{u\in
  W^{1,p}(\Omega);\:u\big|_{\Gamma_i}=u\big|_{\Gamma_{i+3}}\text{ for }i=1,2,3 \big\},
\\[1.5mm]
  &
  \bs{W}_{\mathrm{per},\diver}^{1,p}(\Omega)=\big\{\bs{u}\in
  \bs{W}_{\mathrm{per}}^{1,p}(\Omega);\,\diver\bs{u}=0\text{ a.e.\ in } 
  \Omega\big\},
\\[1.5mm]
  &C_{\mathrm{per},T}^1(\overline{Q})=\big\{\varphi\in
  C^1(\overline{Q});\ \varphi\big|_{\Gamma_i\times\INTT} = \varphi 
   \big|_{\Gamma_{i+3}\times\INTT} , \ 
   \nabla\varphi\big|_{\Gamma_i\times\INTT}
    = \nabla \varphi  \big|_{\Gamma_{i+3}\times\INTT} 
  \\
  &\hspace*{15em} \text{for }i=1,2,3,\ \varphi(x,T)=0\;\:\forall\: x\in\Omega\big\},\\[1.5mm]
  &\bs{C}_{\mathrm{per},T,\diver}^1(\overline{Q})=\big\{\bs{v}\in 
  \bs{C}_{\mathrm{per},T}^1(\overline{Q});\:\diver\bs{v}=0\text{ in }Q\big\}.
\end{align*}
We emphasize that the test functions in
$C^1_{\mathrm{per},T}(\overline{Q})$  vanish at $t=T$. 
Finally, by $\mathcal{M}_\geq(\overline{Q})$ we denote the set of all
non-negative, bounded Radon measures on the $\sigma$-algebra of Borel
sets $\subseteq\overline{Q}$, which is a closed cone in the vector space
$\mathcal{M}(\overline Q)\simeq C(\overline Q)^*$ of all (signed)
Radon measures. 

To simplify the notation we subsequently set $\alpha_1=1$ and
$\nu_2=1$, which can always be achieved by exploiting the scaling
\eqref{eq:2.11}. We further set $\nu_1=1$, but
keep the constant $\nu_0>0$ to emphasize that the source term in the
equation \eqref{1.4} for the turbulent energy $k$ arises from the
dissipation in the momentum equation \eqref{1.2} for $\bs u$.

\begin{definition} \label{def:WeakSol}
  Let $\bs{f} \in\bs{L}^1(Q)$, $\bs{u}_0\in \bs{L}^1(\Omega)$ and
  $\omega_0,k_0\in L^1(\Omega)$ such that $\omega_0,k_0\ge 0$ a.e.\ in
  $\Omega$. A triple of measurable functions $\{\bs{u},\omega,k\}$ in
  $Q$ is called \emph{weak solution of \eqref{eq:I.1-4}  and
    \eqref{eq:I.IBC}  with a non-negative defect measure}
  $\mu\in \mathcal{M}_\geq(\overline{Q})$, if
 \begin{equation}
  \label{2.1}
  \omega>0,\;\; \frac k\omega\ge \mathrm{const}>0\;\text{ a.e. in } Q,
 \end{equation}
\begin{equation}
 \label{2.2}
 \left.\begin{array}{l}
        \bs{u}\in L^\infty \big(0,T ;\bs{L}^2(\Omega)\big)\cap
        L^2\big(0,T;\bs{W}_{\mathrm{per},\diver}^{1,2}(\Omega)\big),
\\[2.5mm]
        \omega\in L^\infty \big(0,T ; L^2(\Omega)\big)\cap
         L^2\big(0,T;W_{\mathrm{per}}^{1,2}(\Omega)\big),
\\[2.5mm]
        k\in  L^\infty (0,T ; L^1(\Omega)\big)\cap
        L^{15/14}\big(0,T;W_{\mathrm{per}}^{1,15/14}(\Omega)\big), 
       \end{array}
\right\}
\end{equation}
\begin{equation}
\label{2.3}
\int_Q\frac k\omega\big(\big(1+\big|\bs{D}(\bs{u})\big|\big)\big|\bs{D}(\bs{u})\big|+|\nabla\omega|+|\nabla k|\big)\dd x\dd t<\infty,
\end{equation}
the following weak equations hold
\begin{equation}
 \label{2.4}
 \left.\begin{array}{l}
       {\displaystyle
         -\!\int_Q\!\bs{u}\cdot\frac{\partial\bs{v}}{\partial t}\dd
         x\dd t-\!\!\int_Q(\bs{u}\otimes\bs{u}):\nabla\bs{v}\,\dd x\dd
         t+\nu_0\int_Q\frac k\omega\bs{D}(\bs{u}):\bs{D}(\bs{v})\dd
         x\dd t}\\[0.9em] 
       =\displaystyle
         \int_\Omega\bs{u}_0(x)\cdot\bs{v}(x,0)\dd
         x+\int_Q\bs{f}\cdot\bs{v}\dd x\dd t\;\;\text{ for all
         }\bs{v}\in \bs{C}_{\mathrm{per},T,\diver}^1(\overline{Q}), 
       \end{array}\right\} 
\end{equation}
\begin{equation}
 \label{2.5}
 \left.\begin{array}{l}
        {\displaystyle -\int_Q\omega\frac{\partial\varphi}{\partial
            t}\,\dd x\dd t-\int_Q\omega\bs{u}\cdot\nabla\varphi\,\dd
          x\dd t+\int_Q\frac
          k\omega\,\nabla\omega\cdot\nabla\varphi\dd x\dd t}\\[0.9em] 
        {\displaystyle=\int_\Omega\omega_0(x)\varphi(x,0)
        \dd x-\int_Q\omega^2\varphi\,\dd x\dd t\;\;
  \text{ for all }\varphi\in C_{\mathrm{per},T}^1(\overline{Q}),}
       \end{array}\right\}
\end{equation}
\begin{equation}
 \label{2.6}
 \left.\begin{array}{l}
        {\displaystyle-\int_Q k\,\frac{\partial z}{\partial t}\,\dd x\dd t-\int_Q k\bs{u}\cdot\nabla z\,\dd x\dd t+\int_Q\frac k\omega\,\nabla k\cdot\nabla z\,\dd x\dd t}\\[0.9em]
        {\displaystyle=\int_\Omega k_0(x)z(x,0)\dd x+\int_Q\Big(\nu_0\frac k\omega\,\big|\bs{D}(\bs{u})\big|^2-\alpha_2 k\omega\Big)z\,\dd x\dd t}\\[0.9em]
        {\displaystyle\quad+\int_{\overline{Q}} z\,\dd
          \mu\quad\text{ for all } z\in C_{\mathrm{per},T}^1(\overline{Q}),}
       \end{array}\right\}
\end{equation}
the Leray-Hopf type energy bound for the Navier-Stokes equation
\begin{equation}
  \label{eq:NS.Ener}
  \left.\begin{array}{l}
        {\displaystyle \int_\Omega  \frac12\big|\bs{u}(x,t)\big|^2 \dd
          x+\int_0^t \!\!\int_\Omega \nu_0\frac k\omega \big| \bs{D}(\bs
          u)\big|^2 \,\dd x\dd s } 
\\[0.9em]
       \leq  {\displaystyle \int_\Omega 
     \frac12\big|\bs{u}_0(x)\big|^2 \dd x + 
   \int_0^t\!\!\int_\Omega\bs{f}\cdot\bs{u}\,\dd x\dd s}
       \end{array}\right\}\text{ for a.a. } t\in[0,T],
\end{equation}
and the total energy satisfies the estimate
\begin{equation}
 \label{eq:EstimTotEner}
\left.\begin{array}{l}
        {\displaystyle \int_\Omega  \! \Big(
          \frac12\big|\bs{u}(x,t)\big|^2 {+}k(x,t) \Big) \dd
          x+\int_0^t\!\!\int_\Omega \alpha_2k\omega\,\dd x\dd s } 
\\[0.9em]
        {\displaystyle\le\int_\Omega \! \Big( \frac12 
   \big|\bs{u}_0(x)\big|^2{+} k_0(x)\Big)\dd x + 
   \int_0^t\!\!\int_\Omega\bs{f}\cdot\bs{u}\,\dd x\dd s}
       \end{array}\right\}\text{ for a.a. } t\in[0,T].
\end{equation}
\end{definition}

\noindent
It is easy to see that all integrals in \eqref{2.4}--\eqref{2.6} are
well-defined. It suffices to consider the integrals with integrands
$k\bs{u}\cdot\nabla z$ and $\frac k\omega\big|\bs{D}(\bs{u})\big|^2z$ in
\eqref{2.6}. Firstly, it is well-known that condition \eqref{2.2} on $\bs{u}$
implies $\bs{u}\in \bs{L}^{10/3}(Q)$ (combine H\"older's inequality and
Sobolev's embedding theorem). Analogously, the condition \eqref{2.2} on $k$
implies $k\in L^{10/7}(Q)$ (take $N=3$, $\theta =3/4$,
$(p_1,p_2)=(1,\frac{15}{14})$, and $(s_1,s_2)=(\infty,\frac{15}{14})$ in
Lemma~\ref{lem:GN}(B) below). Hence, $k\bs{u}\in \bs{L}^1(Q)$. Secondly,
$\frac k\omega\,\big|\bs{D}(\bs{u})\big|^2\in L^1(Q)$ by virtue of \eqref{2.3}.

\begin{remark}\label{r2.neu}\slshape
The condition $k/\omega \geq \text{const} >0$ is crucial for our
existence theory, in particular for obtaining the regularities for $\{\bs u,
\omega, k\}$ stated in \eqref{2.2}. It would be desirable to develop
an existence theory without this condition, because this would allow
us to study  how the support of $k$, which may be called the
`turbulent region', invades the `non-turbulent region' where $k\equiv 0$. 
\end{remark} 

\begin{remark}[Classical solutions are weak solutions]
\label{re:ClassWeakSol}\slshape
Every sufficiently regular classical solution $\{\bs{u},\omega,k\}$ of
\eqref{eq:I.1-4} and \eqref{eq:I.IBC} satisfies the variational identities
\eqref{2.4}, \eqref{2.5} and \eqref{2.6} with defect measure $\mu=0$. To verify
this, we multiply \eqref{1.2}, \eqref{1.3} and \eqref{1.4} by the test
functions $\bs{v}$, $\varphi$ and $z$, respectively, and integrate by parts
over the cube $\Omega$ and then over the interval $[\,0,T\,]$.
Moreover, it is easy to see that the energy inequalities
\eqref{eq:NS.Ener} and \eqref{eq:EstimTotEner} hold as equalities. 
\end{remark}

Of course, the important implication to be shown is that smooth weak
solutions are indeed classical solutions. In order to establish this, we
crucially use that the inequality \eqref{eq:EstimTotEner} for the total energy 
$\int_\Omega \big(\frac12|\bs u|^2 + k\big) \dd x$ and combine it with the upper
estimate \eqref{eq:NS.Ener} for the macroscopic kinetic 
energy $\int_\Omega \frac12|\bs u|^2  \dd x$ and a lower energy estimate for
the turbulent kinetic energy $\int_\Omega  k \,\dd x$, which
will be derived next. 

\begin{lemma} 
  Let $\{\bs{u},\omega,k\}$ be a weak solution of \eqref{eq:I.1-4} and
  \eqref{eq:I.IBC} with defect measure $\mu$. Then, we have the integral
  relations
\begin{subequations}
\begin{equation}
 \label{2.8}
 \int_\Omega\omega(x,t)\dd
 x+\int_0^t\int_\Omega\omega^2\dd x\dd
 s=\int_\Omega\omega_0(x)\dd x\;\;\text{ for all }t\in[0,T\,],
\hspace*{2em}\vspace{0.5em}
\end{equation}
\begin{equation}
 \label{2.9}
 \left.\begin{array}{l}
        {\displaystyle\int_\Omega k(x,t)\dd x=\int_\Omega k_0(x)\dd x
        +\int_0^t\!\!\int_\Omega\Big(\nu_0\frac
        k\omega\,\big|\bs{D}(\bs{u})\big|^2-\alpha_2 k\omega\Big)\dd
        x\dd s}
\\[0.9em]
        \hspace*{6em}{}+{}\mu\big(\overline{\Omega}{\times}[0,t\,]\big)\;
        \text{ for a.a. } t\in [0,T\,],
       \end{array}
\right\}
\vspace{0.5em}
\end{equation}
\begin{equation}
 \label{2.10}
 \lim\limits_{t\to 0}\int_\Omega k(x,t)\dd x=\int_\Omega k_0(x)\dd x 
                               +\mu\big(\overline{\Omega} {\times}\{0\}\big),
\hspace*{10em}\vspace{0.5em}
\end{equation}
\begin{equation}
 \label{2.11}
 \left.\begin{array}{l}
        {\displaystyle\int_\Omega k(x,t)\dd x=\int_\Omega k(x,s)\dd
          x+\int_s^t\!\!\int_\Omega\Big(\nu_0\frac
          k\omega\,\big|\bs{D}(\bs{u})\big|^2-\alpha_2 k\omega\Big)\,\dd x\dd
          \tau}\\[0.9em] 
   \ \qquad\qquad\qquad+\mu\big(\overline{\Omega}\times\,]\,s,t\,]\big)\; 
   \text{ for a.a.\ } s,t\in[0,T\,] \text{ with } s<t.
       \end{array}\right\}
\end{equation}
\end{subequations}
\end{lemma}
\begin{proof} 
It suffices to prove \eqref{2.9}. The same reasoning gives \eqref{2.8}, and
the relations \eqref{2.10} and \eqref{2.11} follow from \eqref{2.9}.  For
$t\in \:]\,0,T\,[\,$ and $m>\frac1{T-t}$ with $m\in \mathbb{N}$ we define
 \[
  \eta_m(s)=\left\{\begin{array}{c@{\quad\text{if }\;\;}l}
                       1 &0\le s \le t,\\[0.1em]
                       m(t{-}s)+1& t \leq s  \leq t+\frac1m,\\[0.1em]
                       0& s\geq t+\frac1m.
                      \end{array}\right.
 \]
Then, $\eta_m\in C({[0,\infty[})$ and $\dot \eta_m = m
1\!\!1_{{]t,t+1/m[}}$. For the Steklov average 
\[
\eta_{m,\lambda}(s) = \frac1\lambda \int_s^{s+\lambda} \eta_m(\tau) \dd \tau,
\quad s\geq 0,\ \lambda>0,
\]
we find 
\[
\eta_{m,\lambda} \in C^1([0,T]), \quad \eta_{m,\lambda}\overset{\lambda \to
  0^+}{\longrightarrow} \eta_m \text{ in } C^0([0,T]), \quad
\eta_{m,\lambda}(0)=1 \text{ and } \eta_{m,\lambda}(T)=0 
\]
for $\lambda \in {]0,t[}$. Moreover, we have 
$\dot \eta_{m,\lambda}(s) \to \dot \eta_m(s) \text{ for all } s \in
[0,T]\setminus\{t, t{+}\tfrac1m\}$, 
and for all $f \in L^1(Q_T)$ we find
\[
\int_{Q_T} f(x,s) \dot \eta_{m,\lambda}(s) \dd x \dd s \longrightarrow -m
\int_{t}^{t+1/m} \int_\Omega f(x,s)\dd x \dd s \quad \text{as } \lambda\to 0^+.
\]

Inserting the function $z(x,s)=1\!\!1_\Omega(x)\eta_{m,\lambda}(s)$ for
$ (x,s) \in Q$ into \eqref{2.6} with $s$ in place of $t$, the limit $\lambda\to
0^+$ leads to the relation 
\begin{align}
m\! \int_t^{t+1/m}\!\!\!\int_\Omega k(x,s)\,\dd x\dd s
\nonumber
 &=\int_\Omega k_0(x)\dd x+\int_0^{t+1/m}\!\!\!\int_\Omega \!
\Big(\nu_0\frac k\omega\,\big|\bs{D}(\bs{u})\big|^2- 
  \alpha_2 k\omega\Big)\eta_m\,\dd x\dd\tau\\
   \label{2.12}
 &\quad+\mu\big(\overline{\Omega}{\times}[\,0,t\,]\big)
 +\int_{\overline{\Omega}\times]t,t+\frac1m[}\!\!\eta_m(\tau)\dd\mu.
\end{align}
Because of $\eta_m(\tau)\in [0,1]$ for all $s \in [0,T]$ we have 
$ \int_{\overline{\Omega}\times]t,t+\frac1m[} \eta_m(\tau) \,\dd\mu \leq
\mu\big(\overline{\Omega}\times {]t,t+\frac1m[}\big) \rightarrow 0$ as
$m\to\infty$. The limit passage $m\to \infty$ in \eqref{2.12} gives
\eqref{2.9} for every Lebesgue point $t\in [0,T\,]$ of the function
$t\mapsto\int_\Omega k(x,t)\dd x$.
\end{proof}

We are now ready to show that smooth enough weak solutions are indeed
classical solutions and that the associated defect measure has to vanish. . 

\begin{proposition}
[Smooth weak solutions are classical]
\label{pr:SmoothWeakSol}
If $\{\bs{u},\omega,k\}$ is a weak solution of \eqref{eq:I.1-4} and
\eqref{eq:I.IBC} with defect measure $\mu$ (in the
sense of Definition \ref{def:WeakSol}) such that $\bs{u}$, $\omega$, and $k$
are sufficiently smooth (e.g.\ twice 
continuously differentiable in $x$ and once in $t$), then 
$\{\bs{u},\omega,k\}$ is a classical solution of  \eqref{eq:I.1-4} and
\eqref{eq:I.IBC}. 
\end{proposition}
\begin{proof} By definition weak solutions lie in $\bs W^{1,2}_\text{per,div}
(\Omega)$, which implies \eqref{1.1}. Similarly, the periodic boundary 
conditions \eqref{1.5} follow from the choice of spaces for the weak solution.   

Using the smoothness of $\{\bs{u},\omega,k\}$ we can integrate by parts in the
weak equations \eqref{2.4} and \eqref{2.5}. From this we obtain the validity of
the classical equations \eqref{1.2} and \eqref{1.3} for $\bs u$ and $\omega$,
respectively, and the initial conditions $\bs u(0,\cdot)=\bs u_0$ and
$\omega(0,\cdot)=\omega_0$.
    
Since the Navier-Stokes equation is classically satisfied, the kinetic energy
satisfies  \eqref{eq:NS.Ener} with equality. Adding this equality to relation
\eqref{2.9} for the turbulent energy, the term $\nu_0 \frac k\omega|\bs D(\bs
u)|^2$ exactly cancels; and we obtain  
\[
\int_\Omega \!\!\big( \frac12 |\bs u(x,t)|^2{+} k(x,t)\big) \dd x = \!\int_\Omega 
\!\!\big( \frac12|\bs u_0|^2 {+} k_0\big) \dd x + \! \int_0^t\!\!\int_\Omega \!\! 
\big( \bs f{\cdot} \bs u {-}\alpha_2 k\omega \big) \dd x \dd s 
+ \mu(\overline{\Omega}{\times} [0,t])
\]
for a.a.\ $t\in [0,T]$.  Comparing this to the total energy inequality
\eqref{eq:EstimTotEner} and using $\mu\geq 0$, we conclude
$ \mu(\overline{\Omega}{\times} [0,t]) = 0 $ for a.a.\ $t\in [0,T]$. Thus, we find
$\mu(\overline\Omega {\times} {[0,T[})=0$ which gives $\int_{\overline Q} z \dd \mu=
\int_{\overline\Omega}z(x,T) \dd\mu(T,x) =0$ in
\eqref{2.6}. For the last identity we exploit that $z\in
C^1_{\text{per},T}(\overline Q) $ implies $z(x,T)=0$ on $\overline\Omega$. 

Again, using the smoothness of $\{\bs{u},\omega,k\}$ we can integrate by
parts in the weak equations \eqref{2.6} and obtain
the validity of the classical equations \eqref{1.4}  and the initial conditions
$k(0,\cdot)=k_0$. 
\end{proof}

We note that by \eqref{2.9} the defect measure $\mu\geq 0$ contributes
positively to the integrated turbulent energy $\int_\Omega k(x,t) \dd
x$. In contrast, the energy inequality \eqref{eq:NS.Ener} for weak
solutions of the Navier-Stokes equations provides an upper bound for the
integrated kinetic energy $\int_\Omega \frac12 |\bs{u}(x,t)|^2 \dd x$ in terms of possibly
different defect measure $\mu_\mathrm{NS}$.  The expectation is that these
two measures exactly cancel each other when considering the total kinetic
energy $\int_\Omega \big( \frac12 |\bs{u}(x,t)|^2 +k(x,t)\big) \dd x $, 
and then \eqref{eq:EstimTotEner} holds as an equality. Our 
methods will not be strong enough to show this cancellation but we establish the
corresponding upper bound stated in  \eqref{eq:EstimTotEner}, which may be
interpreted as $\mu \leq \mu_\mathrm{NS}$.   In the
related work \cite{Ref6} the desired cancellation is derived by completely
different methods.

\begin{remark}
[Conservation law for the energy density $E$] 
\label{rm:TotalEnergy}\slshape 
For fluid models involving an additional energy
  equation, it is natural to derive equations for the total energy density,
  which in our case reads $E(x,t)=k(x,t)+\frac12 |\bs u(x,t)|^2$. This idea
  goes back to Feireisl and M\'alek in \cite{FeiMal06,BuFeMa09} and provides a
  local balance law for the total energy density $E$.  We expect that the
  result of \cite[Thm.\,1.1, Eqn.\,(1.50)]{Ref6} also holds in our case and 
  conjecture that there exist weak solutions as stated in Theorem
  \ref{th:MainExist} that 
  additionally satisfy the distributional form of the local balance equation
\begin{equation}
  \label{eq:BuMa.TotalEnergy}
  \frac{\partial}{\partial t} E + \diver\big( (E{+}p)\bs u\big) = 
\diver\Big(\frac k\omega \nabla k + \nu_0 \frac k\omega \bs D(\bs u) \, \bs u
\Big)  + \bs f  {\cdot}\;\! \bs  u - \alpha_2 k \omega,
\end{equation}
A close inspection of our estimates shows that all
terms in this equation can be defined as distributions, if the pressure $p$ is
recovered from \eqref{1.2} in the standard way. However, at present it remains
unclear how this relation can be derived using our approach based on
pseudo-monotone operators.
\end{remark}

Clearly, integrating the local balance law \eqref{eq:BuMa.TotalEnergy}
over $\Omega$ and using the periodic boundary condition implies that the
total-energy inequality \eqref{eq:EstimTotEner} holds as equality:
\begin{equation}
  \label{eq:IntegEnerg}
  \begin{aligned}
 &\int_\Omega\Big(\frac12\big|\bs{u}(x,t)\big|^2+k(x,t)\Big)\dd
 x+\alpha_2\int_0^t\!\!\int_\Omega k\omega\dd x\dd s\\ 
 & \quad =\int_\Omega\Big(\frac12\big|\bs{u}_0(x)\big|^2+k_0(x)\Big)\dd
 x+\int_0^t\!\!\int_\Omega\bs{f}\cdot\bs{u}\dd x\dd s \quad \text{for all }
 t\in [0,T].
\end{aligned}
\end{equation}
The following result shows that in this case the defect measure $\mu$ in
\eqref{2.6} is closely related to the defect measure associated with the weak
solution of the Navier-Stokes equation. The result follows simply by
subtracting \eqref{2.9} from \eqref{eq:IntegEnerg}.

\begin{proposition}[Energy equalities and defect measure] \label{pr:3.2} 
Let $\{\bs{u},\omega,k\}$ and $\mu$ be a weak solution as in Definition
\ref{def:WeakSol}. If additionally the energy equality \eqref{eq:IntegEnerg}
holds, then the following two statements are equivalent:
\smallskip

(i) \ \quad $\mu=0$;
\smallskip

(ii) \quad ${\displaystyle \int_\Omega\frac12\big|\bs{u}(x,t)\big|^2\dd x 
 +\nu_0\int_0^t\!\!\int_\Omega\frac k\omega 
  \big|\bs{D}(\bs{u})\big|^2\dd x\dd s}$
\\[0.3em]
\hspace*{1.2cm} ${\displaystyle
  \qquad =\frac12\int_\Omega\big|\bs{u}_0(x)\big|^2\dd
  x+\int_0^t\!\!\int_\Omega\bs{f}\cdot\bs{u}\dd x \dd
  s\;\text{ for a.a. }\: t\in[\,0,T\,]}$.
\end{proposition}

This result shows that the two energy inequalities \eqref{eq:NS.Ener},
\eqref{eq:EstimTotEner} and the 
defect measure $\mu$ in \eqref{2.6} are related to the classical problem of
proving an energy equality for weak solutions of the Navier-Stokes 
equations. A similar result for the case of Navier-Stokes equations
with temperature dependent viscosities has been obtained in
\cite{Ref22}. 
Defect measures also appear in a natural way in the context of weak
solutions of other types of nonlinear PDEs (see e.g.\
\cite{Ref1,Ref11,Ref16}).

\section{An existence theorem for weak solutions}\label{s4}
\setcounter{equation}{0}

We define the function spaces 
\begin{align*}
 & C_{\mathrm{per}}^\infty(\Omega)=\big\{u|_\Omega\; ; \ u\in C^\infty(\mathbb{R}^3), u \,\text{ is $\boxlength$-periodic}\\
 &\hspace*{4cm} \text{ in the directions }\:\bs{e}_1, \bs{e}_2,\bs{e}_3\big\},\\[1.5mm]
 &\bs{C}_{\mathrm{per},\diver}^\infty(\Omega)=\big\{\bs{u}\in
 \bs{C}_{\mathrm{per}}^\infty(\Omega)\; ; \ \diver\bs{u}=0\;\text{ in }\: \Omega\big\}.
\end{align*}
We impose the following conditions upon the right-hand side in
\eqref{1.2} and the initial data in \eqref{1.6}:  
\begin{equation}
 \label{3.1}
 \left.\begin{array}{l}
   \bs{f}\in \bs{L}^2(Q);\  \bs{u}_0 \in
   \bs{L}^2_{\diver}(\Omega):= \overline{\bs{C}_{\mathrm{per},\diver}^\infty(\Omega)}^{\|\cdot\|_{\bs{L}^2(\Omega)}}, 
    \ \omega_0 \in L^\infty(\Omega), \ k_0\in L^1(\Omega),
\\[1.5mm]
   \text{\it there exist positive }\omega_*,\ \omega^* \text{ \it such
     that }    \omega_*\le\omega_0(x)\le\omega^*\text{ \it for a.a. }
        x\in\Omega,
\\[1.5mm]
       \text{\it there exist positive } k_* \text{ \it such
     that }    k_0(x) \ge k_* \text{ \it for a.a. }
        x\in\Omega .
       \end{array}\right\}
\end{equation}
The following theorem is the main result of our paper.
\medskip

\begin{theorem}[Main existence result] \label{th:MainExist} Assume \eqref{3.1}
  and $\alpha_2=\mathrm{const}>0$ (cf.\ \eqref{1.4}).  Then there exists a
  triple of measurable functions $\{\bs{u},\omega,k\}$ in $Q$ and a
  non-negative defect measure 
  $\mu\in \mathcal{M}_\geq (\overline{Q})$ such that
\begin{equation}
 \label{3.2}
 \frac{\omega_*}{1+t\omega_*}\le\omega(x,t)\le\frac{\omega^*}{1+t\omega^*}
 \text{ and } \frac{k_*}{(1+t\omega^*)^{\alpha_2}}\le k(x,t)\;\text{ \it for a.a.\ } (x,t)\in Q;
\end{equation}
\begin{equation}
 \label{3.3}
 \left.\begin{array}{l}
        \bs{u}\in C_{\mathrm{w}} \big([\,0,T\,];\bs{L}^2(\Omega)\big)\cap
        L^2(0,T;\bs{W}_{\mathrm{per},\diver}^{1,2}(\Omega)\big),\\[2.5mm] 
        \omega\in C_{\mathrm{w}}\big([\,0,T\,];L^2(\Omega)\big)\cap
        L^2\big(0,T;W_{\mathrm{per}}^{1,2}(\Omega)\big),\\[2.5mm] 
        k\in L^\infty\big(0,T;L^1(\Omega)\big)\cap\bigcap\limits_{1\le
          p<2}L^p\big(0,T;W_{\mathrm{per}}^{1,p}(\Omega)\big); 
        \end{array}\right\}
\end{equation}
\begin{equation}
 \label{3.5}
 \int_Q k\,\big(\big|\bs{D}(\bs{u})\big|^2 {+}|\nabla\omega|^2\big) 
  \dd x\dd t<\infty, 
\end{equation}
\begin{equation}
 \label{3.6}
 \left.\begin{array}{l}
        \displaystyle \bs{u}'
  \in\bigcap\nolimits_{ \sigma > 16/5} L^{4/3}\big(0,T;\big(
  \bs{W}_{\mathrm{per},\diver}^{1,\sigma} (\Omega)\big)^*\big),
\\[0.9em]
        \displaystyle \omega' 
  \in\bigcap\nolimits_{\sigma> 16/5} L^{4/3}\big(0,T; 
       \big(W_{\mathrm{per}}^{1,\sigma}(\Omega)\big)^*\big).
       \end{array}\right\}
\end{equation}
The triple $\{\bs u, k, \omega\}$ is a weak solution of
\eqref{eq:I.1-4} and \eqref{eq:I.IBC} in the sense of Definition
\ref{def:WeakSol} with 
\begin{equation}
 \label{3.10}
 \bs{u}(0)=\bs{u}_0 \text{ in }  \bs{L}^2(\Omega) \  
 \text{ and } \
 \omega(0)=\omega_0 \text{ in } L^2(\Omega); 
\vspace{0.6em}
\end{equation}
In particular, \eqref{2.6} holds and for all $\sigma> 16/5$ we have 
\begin{equation}
 \label{3.7}
 \left.\begin{array}{l}
        {\displaystyle\int_0^T\!\!\big\langle\bs{u}'(t),\bs{v}(t) 
         \big\rangle_{W_{\mathrm{per},\diver}^{1,\sigma}} \!\!\dd t
          +\int_Q\Big( {-}(\bs{u} {\otimes} \bs{u}) {:} \nabla\bs{v}
  +\nu_0\frac k\omega\,\bs{D}(\bs{u}) 
  {:}\bs{D}(\bs{v})\Big)\dd x\dd t }
\\[0.7em]
        {\displaystyle=\int_Q\bs{f}{\cdot}\bs{v}\,\dd x\dd t}
     \qquad \ \text{ for all } \bs{v}\in
 L^\sigma \big(0,T; \bs{W}_{\mathrm{per},\diver}^{1,\sigma}(\Omega)\big) ; 
  \end{array}\right\}
\vspace{0.6em}
\end{equation}
\begin{equation}
 \label{3.8}
 \left.\begin{array}{l}
     \displaystyle
  \int_0^T\big\langle\omega'(t),\varphi(t) 
               \big\rangle_{W_{\mathrm{per}}^{1,\sigma}}\dd t
   -\int_Q\omega\bs{u}\cdot\nabla\varphi\,\dd x\dd t+\int_Q\frac
     k\omega\,\nabla \omega\cdot\nabla\varphi\,\dd x\dd t \\
 \displaystyle=-\int_Q\omega^2\varphi\,\dd x\dd t
  \quad \text{ for all }
    \varphi\in L^\sigma \big(0,T; W_{\mathrm{per}}^{1,\sigma} 
     (\Omega)\big) . 
       \end{array}\right\}
\end{equation}
\end{theorem}

Of course, in \eqref{3.7} and \eqref{3.8} it suffices to consider
$\sigma = \frac{16}5+\eta$ for an arbitrarily small $\eta>0$. 
The derivatives $\bs{u}'$ and $\omega'$ in \eqref{3.6} are understood
in the sense of distributions from $]\,0,T\,[\,$ into
$\big(\bs{W}_{\mathrm{per},\diver}^{1,\sigma}(\Omega)\big)^*$ and
$\big(W_{\mathrm{per}}^{1,\sigma}(\Omega)\big)^*$, respectively (see e.g.\
\cite[App.]{Ref5}, \cite[pp.\,54--56]{Ref9} for details). Here
we have used the continuous and dense embeddings 
\[
 W_{\mathrm{per}}^{1,2}(\Omega)\subset
 L^2(\Omega)\subset\big(W_{\mathrm{per}}^{1,\sigma}(\Omega)\big)^*\quad
 \text{for } { \sigma \geq  \frac{6}5. }
\]

To see that $\{\bs{u},\omega,k\}$ together with the measure $\mu$ in
the above theorem are a weak solution of \eqref{eq:I.1-4}  and
\eqref{eq:I.IBC}  in the sense of the Definition
\ref{def:WeakSol}, it suffices to note that \eqref{2.4}
and \eqref{2.5} follow from \eqref{3.7} and \eqref{3.8}, respectively,
by integration by parts of the first integrals on the left-hand sides.

Before starting the proof it is 
instructive to check that the above estimates \eqref{3.2} to
\eqref{3.6} are enough to show that all terms in \eqref{3.7}, \eqref{3.8}, and
\eqref{2.6} are well defined. For this, we first recall the classical
Gagliardo-Nirenberg estimate and then provide an anisotropic
  version that is adjusted to the parabolic problems on $Q=[0,T]\times
  \Omega$, we use the short-hand notations
\[
L^s(L^p):=L^s(0,T;L^p(\Omega)) \quad \text{and} \quad 
J_\theta(a,b) := a^{1-\theta}\big( a{+}b \big)^\theta.
\] 

\begin{lemma}[Gagliardo-Nirenberg estimates] \label{lem:GN}
For $N\in \mathbb N$ consider a bounded Lipschitz domain 
$\Omega\subset\mathbb{R}^N$. \\
(A) (Classical isotropic version)  Assume $1\leq p_1<
p<\infty$, 
$p_2\in {]1,N[}$ and $\theta\in {]0,1[}$ such that
\begin{equation}
  \label{eq:Interp.p}
  \frac1p = (1{-}\theta) \,\frac1{p_1}
+\theta\:\big( \frac1{p_2}-\frac1N\big). 
\end{equation} 
Then, there exists a constant $C>0$ such that for all $\psi\in W^{1,p_2}(\Omega)$ we have
\begin{equation}
 \label{3.15}
 \| \psi\|_{L^p (\Omega)} \leq C \, J_\theta\big(
 \|\psi\|_{L^{p_1}(\Omega)},  \|\nabla
\psi\|_{L^{p_2}(\Omega)} \big). 
\end{equation}
(B) (Anisotropic version) Consider $p,\;p_1,\;p_2$, and $\theta$ as in
(A) and $s,\;s_1$, and $s_2$ satisfying
\begin{equation}
  \label{eq:Interp.s}
1\leq s_2 \leq s\leq s_1 \quad \text{ and } \frac1s = (1{-}\theta) \,\frac1{s_1}
+\theta\:\frac1{s_2}.
\end{equation} 
Then, there exists $C^*>0$ such that for all $\varphi \in
L^{s_2}(0,T;W^{1,p_2}(\Omega))$ we have 
\begin{equation}
  \label{eq:GaNi.aniso}
\|\varphi\|_{L^s(L^p)} \leq C^* J_\theta \Big( 
\| \varphi\|_{L^{s_1}(L^{p_1})} , \| \nabla\varphi\|_{L^{s_2}(L^{p_2})} \Big) .
\end{equation}
\end{lemma}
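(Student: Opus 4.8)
\textbf{Proof plan for Lemma~\ref{lem:GN}.}
Part (A) is the classical Gagliardo--Nirenberg inequality, so I would simply cite a standard reference (e.g.\ Nirenberg's original paper or Brezis' book); the only point worth a remark is that on a bounded Lipschitz domain one needs the full $W^{1,p_2}$-norm on the right, but using $J_\theta(a,b)=a^{1-\theta}(a+b)^\theta$ in place of the homogeneous product $a^{1-\theta}b^\theta$ conveniently absorbs the lower-order term, since $\|\psi\|_{W^{1,p_2}}\simeq \|\psi\|_{L^{p_2}}+\|\nabla\psi\|_{L^{p_2}}$ and $\|\psi\|_{L^{p_2}}\le C\|\psi\|_{L^{p_1}}^{1-\vartheta}(\|\psi\|_{L^{p_1}}+\|\nabla\psi\|_{L^{p_2}})^{\vartheta}$ by a further interpolation if $p_1<p_2$, or directly if $p_1\ge p_2$. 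So (A) requires no real work.

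For part (B) the plan is to reduce the space-time estimate to (A) fibrewise in $t$ followed by a H\"older inequality in $t$. Concretely, for a.e.\ $t\in(0,T)$ apply \eqref{3.15} to $\psi=\varphi(\cdot,t)$ to get
\[
\|\varphi(\cdot,t)\|_{L^p(\Omega)}\le C\,\|\varphi(\cdot,t)\|_{L^{p_1}(\Omega)}^{1-\theta}\Big(\|\varphi(\cdot,t)\|_{L^{p_1}(\Omega)}+\|\nabla\varphi(\cdot,t)\|_{L^{p_2}(\Omega)}\Big)^{\theta}.
\]
Raise this to the power $s$, integrate over $(0,T)$, and apply H\"older's inequality in $t$ with the exponent pair determined by $\frac1s=(1-\theta)\frac1{s_1}+\theta\frac1{s_2}$, i.e.\ with conjugate exponents $\frac{s_1}{s(1-\theta)}$ and $\frac{s_2}{s\theta}$ (which are $\ge1$ and conjugate precisely because of \eqref{eq:Interp.s}). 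This yields
\[
\|\varphi\|_{L^s(L^p)}\le C\,\|\varphi\|_{L^{s_1}(L^{p_1})}^{1-\theta}\,\Big\|\,\|\varphi\|_{L^{p_1}(\Omega)}+\|\nabla\varphi\|_{L^{p_2}(\Omega)}\,\Big\|_{L^{s_2}(0,T)}^{\theta}.
\]
Finally, bound the last factor by $\|\varphi\|_{L^{s_2}(L^{p_1})}+\|\nabla\varphi\|_{L^{s_2}(L^{p_2})}$ using the triangle inequality in $L^{s_2}(0,T)$, and then use $s_2\le s_1$ together with $T<\infty$ — or, cleaner, note $\|\varphi\|_{L^{s_2}(L^{p_1})}\le C\,\|\varphi\|_{L^{s_1}(L^{p_1})}^{1-\lambda}(\cdots)^\lambda$ is not even needed: one can simply enlarge $J_\theta$ by monotonicity, absorbing $\|\varphi\|_{L^{s_2}(L^{p_1})}$ into the term $\|\varphi\|_{L^{s_1}(L^{p_1})}$ at the cost of a constant depending on $T$, since $s_2\le s_1$. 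Collecting constants into $C^*$ gives \eqref{eq:GaNi.aniso}.

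The only mildly delicate point — the part I would be most careful about — is the bookkeeping of exponents in the H\"older step: one must check that $\frac{s(1-\theta)}{s_1}+\frac{s\theta}{s_2}=1$ exactly matches the relation in \eqref{eq:Interp.s}, so that the two exponents $\frac{s_1}{s(1-\theta)},\frac{s_2}{s\theta}$ are genuinely a conjugate pair, and that both are $\ge1$ (equivalently $s\le s_1$ and $s\le s_2$? — actually only $s\le s_1$ and $s_2\le s$ are needed, which is exactly the ordering assumed). I would also note in passing that the measurability of $t\mapsto\|\varphi(\cdot,t)\|_{L^p(\Omega)}$ etc.\ is standard for $\varphi\in L^{s_2}(0,T;W^{1,p_2}(\Omega))$, so that the $t$-integrals make sense. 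Everything else is routine.
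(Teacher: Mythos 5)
Your proposal is correct and follows essentially the same route as the paper's proof: part (A) is cited, and part (B) is obtained by applying (A) fibrewise in $t$, then a H\"older step in time with the conjugate pair determined by \eqref{eq:Interp.s}, and finally absorbing the $\|\varphi\|_{L^{s_2}(L^{p_1})}$ contribution into $\|\varphi\|_{L^{s_1}(L^{p_1})}$ using $s_2\le s_1$ and $T<\infty$ (the factor $T^{1/s_2-1/s_1}$), exactly as in the paper. No changes needed.
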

\begin{proof} Part (A) is well-known, see  e.g.\
  \cite[Thm.\,1.24]{Roub13NPDE}.

To establish Part (B) we apply Part (A) for $\psi=\varphi(t)$ 
a.a.\ $t\in [0,T]$. Thus, we obtain (abbreviating
$\|\psi\|_p:=\|\psi\|_{L^p(\Omega)}$)  
\begin{align*}
\|\varphi\|^s_{L^s(L^p)}&=\int_0^T \| \varphi(t)\|_{p}^s\dd t
\overset{\text{\eqref{3.15}}}\leq C_1 \int_0^T \|\varphi(t)\|_{p_1}^{(1-\theta)s} 
\big(\|\varphi(t)\|_{p_1}{+} \|\nabla \varphi(t)\|_{p_2}
\big)^{\theta s} \dd t 
\\ 
&\hspace{-1.8em}\overset{\text{H\"older+\eqref{eq:Interp.s}}}\leq C_1
\big\|\,\|\varphi\|_{p_1}\big\|_{L^{s_1}(0,T)}^{(1-\theta)s}
\;\Big\|\, \|\varphi\|_{p_1} {+}
\|\nabla\varphi\|_{p_2} \Big\|_{L^{s_2}(0,T)}^{\theta s}\\
&\hspace{-0.6em} \overset{s_1\geq s_2}\leq  C_1
\big\|\varphi\|_{L^{s_1}(L^{p_1})}^{(1-\theta)s}\big( 
T^{1/s_2-1/s_1} \|\varphi\|_{L^{s_1}(L^{p_1})} {+}
\|\nabla\varphi\|_{L^{s_2}(L^{p_2})} \big)^{\theta s}  \\
& \leq \ C_2
\Big(  J_\theta \big( \|\varphi\|_{L^{s_1}(L^{p_1})},
\|\nabla\varphi\|_{L^{s_2}(L^{p_2})} \big) \Big)^s , 
\end{align*}
which is the desired estimate. 
\end{proof}

\begin{remark}[Well-definedness of nonlinear terms]\label{r3.1} \slshape
We first show that the second integral on the left-hand side of
the variational identity in \eqref{3.7} is  well-defined. 
For the integral of  $(\bs u {\otimes} \bs u) {:} \nabla \bs v$
we see that \eqref{3.3} allows us to use Lemma \ref{lem:GN} with $N=3$,
$(s_1,p_1)=(\infty,2)$ and $(s_2,p_2)=(2,2)$. With $\theta=3/4$ part
(A) gives 
\begin{equation}
 \label{3.13}
\|\bs u\|_{\bs L^4(\Omega)} \leq C\Big( \|\bs u\|_{\bs L^2(\Omega)} + 
 \|\bs u\|_{\bs L^2(\Omega)}^{1/4}  \|\nabla\bs u\|_{\bs L^2(\Omega)}^{3/4}\Big),
\end{equation}
whereas part (B) leads to $\bs u\in L^{8/3}(0,T;\bs L^4(\Omega))$, which implies 
\begin{equation}
 \label{3.14}
\bs u {\otimes} \bs u \in L^{4/3}(0,T;L^2(\Omega)). 
\end{equation}
With
$\sigma>16/5>2$ we have $\nabla \bs v\in L^2(0,T; L^2(\Omega))$ and
$\int_Q (\bs u {\otimes} \bs u) : \nabla \bs v\dd x \dd t$ 
is well defined.  Using $\theta=3/5$ in Lemma \ref{lem:GN}(B) we
obtain $s=p=10/3$ and hence conclude 
\begin{equation}
\label{eq:bs.u.10/3}
\| \bs u\|_{L^{10/3}(Q)} \leq C_2J_{3/5}\Big(\| \bs u\|_{L^\infty(\bs
  L^2)}, \|\nabla \bs u\|_{L^2(L^2)}\Big).  
\end{equation}

For the integral of $\frac k\omega \bs D(\bs u){:} \bs D(\bs v)$ we use
$\omega\geq 
\omega_*/(1{+}T\omega_*)>0$ from \eqref{3.2}, $k^{1/2}\bs D(\bs u)\in
L^2(Q)$  from \eqref{3.5}. Using \eqref{3.3} we can apply
Lemma \ref{lem:GN}(B) to $k$ with $N=3$, $(s_1,p_1)=(\infty,1)$, and
$s_2=p_2\in {[1,2[}$. Choosing $\theta=3/4$ we obtain $s=p=4p_2/3$,
such that $k$ lies in
$L^{4p_2/3}(0,T;L^{4p_2/3}(\Omega))=L^{4p_2/3}(Q)$. As $p_2\in {[1,2[}$ is
arbitrary, we have $k^{1/2} \in L^q(Q)$ for all $q\in {[1,16/3[}$. 
By H\"older's inequality we arrive at 
\begin{equation}
  \label{3.16}
  k\bs D(\bs u) = k^{1/2} \,k^{1/2}\bs D(\bs u) \in L^{\overline p}(Q)
  \ \text{ for all }\overline p \in {[1,16/11[}.
\end{equation}
Using $\bs D(\bs v)\in L^\sigma(0,T;L^\sigma(\Omega)) = L^{\sigma}(Q)$ 
with $ \sigma >16/5 $ we see that there is always a
$\overline p\in {[1,16/11[}$ such that
$\frac1{\sigma}+ \frac1{\overline p}\leq 1$. Hence we conclude
\[
\int_Q \big| \frac k\omega \bs D(\bs u){:} \bs D(\bs v) \big| \dd x \dd t 
 \leq C 
\| k\bs D(\bs u)\|_{L^{\overline p}(Q)} \|\bs D(\bs v) \|_{L^{\sigma}(Q)} <\infty. 
\]
Thus, by a routine argument, \eqref{3.14} and \eqref{3.16} lead to the
existence of the distributional derivative $\bs{u}'$ as in
\eqref{3.6}, see also Sections~\ref{s5.4}--\ref{s5.6}. 

An analogous reasoning applies to the second and the third integral on
the left-hand side of the variational identity in \eqref{3.8}.

Finally, combining $\bs u \in \bs L^2(Q)$ and $\nabla k\in L^p(Q)$ for
all $p\in {[1,2[}$ (see \eqref{3.3}) and $k \in
L^{4p/3}(Q)$ from above,  H\"older's inequality gives 
\[
 k \bs u \in \bs L^q(Q) \text{ and } k \nabla k\in \bs
 L^q(Q)\quad\text{for all } q\in {[1,8/7[}, 
\]
i.e., the second and third integral on the left-hand side in
\eqref{2.6} are well defined.
\end{remark}

The estimates \eqref{3.2}, which will be derived by using
suitable comparison arguments, allow us to deduce the following
result (based on the choice $\alpha_1=1$). 

\begin{corollary}\label{c3.1}
 For a.a.\ $(x,t)\in Q$, we have the following estimates:
 \begin{align}
  \label{3.17}
  &L(x,t):=\frac{k(x,t)^{1/2}}{\omega(x,t)}\ge
  \frac{k_*^{1/2}}{\omega^*}\:(1+t\omega^*)^{1-\alpha_2/2},\\ 
  \label{3.18}
  &\frac 1{\omega^*}+t\le\frac1{\omega(x,t)}\le\frac1{\omega_*}+t.
 \end{align}
\end{corollary}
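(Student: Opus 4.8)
The plan is to obtain both estimates by purely elementary manipulations of the pointwise bounds \eqref{3.2} established in Theorem~\ref{th:MainExist}, recalling that $\alpha_1=1$ has been fixed in Section~\ref{s3}.

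First I would prove \eqref{3.18}. Since $\omega(x,t)>0$ a.e.\ and all the bounds in \eqref{3.2} are strictly positive, the double inequality $\frac{\omega_*}{1+t\omega_*}\le\omega(x,t)\le\frac{\omega^*}{1+t\omega^*}$ may be inverted to give $\frac{1+t\omega^*}{\omega^*}\le\frac1{\omega(x,t)}\le\frac{1+t\omega_*}{\omega_*}$; expanding $\frac{1+t\omega^*}{\omega^*}=\frac1{\omega^*}+t$ and likewise for the upper bound yields exactly \eqref{3.18}.

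Next I would establish \eqref{3.17}. Taking square roots in the lower bound $k(x,t)\ge k_*/(1+t\omega^*)^{\alpha_2}$ gives $k(x,t)^{1/2}\ge k_*^{1/2}(1+t\omega^*)^{-\alpha_2/2}$. Multiplying this non-negative quantity by the lower bound $\frac1{\omega(x,t)}\ge\frac{1+t\omega^*}{\omega^*}$ obtained above produces
\[
L(x,t)=\frac{k(x,t)^{1/2}}{\omega(x,t)}\ge\frac{k_*^{1/2}}{(1+t\omega^*)^{\alpha_2/2}}\cdot\frac{1+t\omega^*}{\omega^*}=\frac{k_*^{1/2}}{\omega^*}\,(1+t\omega^*)^{1-\alpha_2/2},
\]
which is \eqref{3.17}.

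There is no genuine obstacle at the level of the corollary itself: the entire content is the arithmetic above. The real work lies upstream in the already-stated bounds \eqref{3.2}, whose proof rests on comparison arguments with the explicit spatially homogeneous solution \eqref{eq:Solution}; once \eqref{3.2} is available, \eqref{3.17}--\eqref{3.18} follow in the few lines indicated.
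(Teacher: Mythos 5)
Your proof is correct and is precisely the argument the paper intends: the corollary is deduced from the pointwise bounds \eqref{3.2} (with $\alpha_1=1$) by inverting the two-sided bound on $\omega$ to get \eqref{3.18} and combining the square root of the lower bound on $k$ with the lower bound on $1/\omega$ to get \eqref{3.17}. Nothing further is needed.
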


\noindent
Kolmogorov claimed in \cite{Ref14} that $L=L(x,t)$ ``... grows in proportion of
$t^{2/7}$ ...'' (see also \cite[p.\,215]{Ref25},
\cite[p.\,329]{Ref27}). Clearly, from \eqref{3.17} with
$\alpha_2=10/7$ it follows
\[
L(x,t)\ge\frac{k_*^{1/2}}{\omega^*}(1+t\omega^*)^{2/7}\quad\text{for a.a. }\; (x,t)\in\Omega\times\,]\,t_0,T\,[\,.
\]
Of course, Kolmogorov's claim is compatible with our lower estimate for any
choice $\alpha_2\geq 10/7$ (and in \cite{Ref14} $\alpha_2=11/7$ was
chosen). However, it cannot be true for $\alpha_2 \in {]0,10/7[}$.


\section{Proof of the existence theorem}\label{s5}

The proof of the main Theorem \ref{th:MainExist} proceeds in several
steps. First we regularize the problem by adding small higher-order
dissipation terms of $r$-Laplacian type and small
coercivity-generating lower order terms. A general result for
pseudo-monotone operators, which is detailed in Appendix \ref{s.A},
then provides approximate solutions $\{\bs u_\eps,
\omega_\eps,k_\eps\}$. In Section \ref{s5.2} we provide
$\eps$-independent upper and lower bounds for $\omega_\eps$ and
$k_\eps$ by comparison arguments. In Section \ref{s5.3} we
complement the standard energy estimates by improved integral
estimates for $k_\eps$ that allow us to pass to the limit $\eps
\searrow 0$ in Section \ref{s5.5}.

\subsection{Defining suitable approximate solutions 
$\{ \bs{u}_\eps,  \omega_\eps, k_\eps\}$ }
\label{s5.1}

Let be $\omega_*$, $\omega^*$ and $k_*$ as in \eqref{3.1}. We
introduce the comparison functions 
\begin{equation}
 \label{4.1}
 \underline{\omega}(t)=\frac{\omega_*}{1+t\omega_*},\quad\overline{\omega}(t)=\frac{\omega^*}{1+t\omega^*},\quad\kappa(t)=\frac{k_*}{(1+t\omega^*)^{\alpha_2}}\quad
 \text{for } t\in[0,T],
\end{equation}
which will be the desired bounds for $\omega_\eps$ and
$k_\eps$ in $Q$. Subsequently we will use the notion 
\[
\xi^+ := \max\{\xi,0\} \geq 0 \quad \text{and} \quad
\xi^-=\min\{\xi,0\} \leq 0
\]
for the positive and negative parts of real numbers or real-valued
functions.

We choose a fixed number $r\in {]3,\infty[}$ and consider for all small
$\eps >0$ the following {\it $r$-Laplacian approximation\/} of
\eqref{eq:I.1-4}, where we add the coercivity-generating terms 
$\eps\big|\bs u\big|^{r-1} \bs u$,  $\eps |\omega|^{r-2} \omega $
and $\eps |k|^{r-2} k$ to the right-hand sides of \eqref{1.2} to 
\eqref{1.4}, respectively:
\begin{subequations}
\label{eq:4Approx}
\begin{align}
 \label{4.2}
&\hspace*{5em} \diver\bs{u}=0,
\\[0.7em]
\label{4.3}
&\begin{aligned}
      \frac{\partial\bs{u}}{\partial t}+(\bs{u}\cdot\nabla)\bs{u} 
& = \nu_0\diver\Big(\frac{k^+}{\eps+\omega^+}\,\bs{D}(\bs{u})\Big) 
     -\nabla p+\bs{f}\\[0.3em] 
     &\quad
     +\eps\Big(\diver\big(\big|\bs{D}(\bs{u})\big|^{r-2} 
               \bs{D}(\bs{u})\big)-|\bs{u}|^{r-2}\bs{u}\Big), 
      \end{aligned}
\\[0.7em]
 \label{4.4}
 &\begin{aligned}
       \frac{\partial\omega}{\partial t}+\bs{u}\cdot\nabla\omega
&= \diver\Big(\frac{k^+}{\eps+\omega^+}\,\nabla\omega\Big)-\omega^+\omega
  \\[0.3em] 
        &\quad+\eps\Big(\diver\big(|\nabla\omega|^{r-2}\nabla\omega\big) 
       -|\omega|^{r-2}\omega\Big)+\eps\big(\underline{\omega}(t)\big)^{r-1},
       \end{aligned}
 \\[0.7em]
 \label{4.5}
 &\begin{aligned}
     \frac{\partial k}{\partial t}+\bs{u}\cdot\nabla k
    &=\diver\Big(\frac{k^+}{\eps\!+\!\omega^+}\,\nabla
    k\Big)\!+\nu_0\frac{k^+}{\eps+\omega^+\!+\eps
      k^+}\,\big|\bs{D}(\bs{u})\big|^2\!\!-\!\alpha_2 k\omega^+
     \\[0.3em] 
        &\quad+\eps\Big(\diver\big(|\nabla k|^{r-2}\nabla k\big)- 
       |k|^{r-2}k\Big)+\eps\big(\kappa(t)\big)^{r-1}.        
       \end{aligned}
\end{align}
\end{subequations}
The additional terms $\eps\big(\underline{\omega}(t)\big)^{r-1}$ and
$\eps\big(\kappa(t)\big)^{r-1} $ are added in \eqref{4.4} and \eqref{4.5},
respectively, to make the comparison principle work again.   
In principle, it would be possible to use different exponents $r_{\bs u}$,
$r_\omega$, and $r_k$ in the equations \eqref{4.3} to \eqref{4.5}, because they need
to satisfy different restrictions. In our case $r=r_{\bs u}=r_\omega=r_k$ is sufficient
and fits exactly with the assumptions in \eqref{eq:A.Assum} with $p=r$ for the
abstract existence Theorem \ref{thm:A}. 

We consider system \eqref{eq:4Approx} with initial data
$\{\bs{u}_{0,\eps},\omega_{0,\eps},k_{0,\eps}\}$ satisfying 
\begin{subequations}
\label{eq:4.IC}
\begin{align}
 \label{4.6}
 &\{\bs{u}_{0,\eps},\omega_{0,\eps},k_{0,\eps}\}\in\bs{W}_{\mathrm{per},\diver}^{1,r}
  (\Omega)\times  W_{\mathrm{per}}^{1,r}(\Omega)\times W_{\mathrm{per}}^{1,r}(\Omega) ,
\\[0.5em]
\label{4.7}
 &\omega_*\le\omega_{0,\eps}(x)\le\omega^* \ \text{ and } \  k_{0,\eps}(x)\ge
 k_*\quad\text{ a.e.\ in } \Omega,
\\[0.5em]
\label{4.8}
 &\left.\begin{array}{@{}l}
          \bs{u}_{0,\eps}\longrightarrow \bs{u}_0\text{ in }\bs{L}^2(\Omega),\quad \omega_{0,\eps}\longrightarrow \omega_0\text{ a.e.\ in } \Omega,\\[2mm]
          k_{0,\eps}\longrightarrow k_0\;\text{ in }\; L^1(\Omega)\ 
     \text{ for }\eps\rightarrow 0.
         \end{array}\right\}
\end{align}
\end{subequations}
The existence of a sequence $\{\bs{u}_{0,\eps}\}_{\eps>0}$ which
satisfies \eqref{4.6} follows immediately from the condition on
$\bs{u}_0$ in \eqref{3.1}, whereas the existence of sequences
$\{\omega_{0,\eps}\}_{\eps>0}$ and $\{k_{0,\eps}\}_{\eps>0}$
satisfying \eqref{eq:4.IC} can be derived by routine argument
from the conditions on $\omega_0$ and $k_0$ in \eqref{3.1}.

The following lemma states the existence of weak solutions of 
\eqref{eq:4Approx} under the periodic boundary conditions
\eqref{1.5} and initial data \eqref{eq:4.IC}. This result, which
we derive in Appendix \ref{s.A} by a direct application of existence
results for pseudo-monotone evolutionary problems (see Theorem
\ref{thm:A}), forms the starting point for our discussion in
Subsections~\ref{s5.2}--\ref{s5.6}.


\begin{proposition}[Existence of approximate solutions] \label{p4.1}
  Let $\{\bs{u}_{0,\eps},\omega_{0,\eps},k_{0,\eps}\}_{\eps>0}$ be as
  in \eqref{eq:4.IC}, $r>3$, and $\bs{f}\in\bs{L}^2(Q)$.
  Then, for every $\eps>0$ there exists a triple
  $\{\bs{u}_\eps,\omega_\eps,k_\eps\}$ such that
\begin{subequations}
 \label{4.9}
\begin{align}
 \label{4.9a}
        &\bs{u}_\eps\in C\big([\,0,T\,];\bs{L}^2(\Omega)\big)\cap
        L^r\big(0,T;\bs{W}_{\mathrm{per},\diver}^{1,r}(\Omega)\big),
 \\[1.5mm]
 \label{4.9b}     
   &\omega_\eps,k_\eps\in C\big([\,0,T\,];L^2(\Omega)\big)\cap
   L^r\big(0,T;W_{\mathrm{per}}^{1,r}(\Omega)\big), 
\\[1.5mm]
 \label{4.9c}
 &\bs{u}'_\eps \in
 L^{r'}\big(0,T;\big(\bs{W}_{\mathrm{per},\diver}^{1,r}(\Omega)\big)^*\big),
 \quad \omega'_\eps,\, k'_\eps\in
 L^{r'}\big(0,T;\big(W_{\mathrm{per}}^{1,r}(\Omega)\big)^*\big),
\end{align}
\end{subequations}
and 
\begin{subequations}
\label{eq:ApproxSys}
\begin{equation}
 \label{4.11}
 \left.\begin{array}{l}
        \displaystyle \int_0^T\big\langle\bs{u}'_\eps(t),\bs{v}(t)\big\rangle_{W_{\mathrm{per},\diver}^{1,r}}\dd
          t+\int_Q \sum_{i=1}^3 u_{\eps,i}(\partial_i\bs{u}_\eps)
          \cdot\bs{v}\dd x\dd t \\ 
       \quad +{\displaystyle\,\nu_0\int_Q\frac{k_\eps^+}{\eps+\omega_\eps^+}\,\bs{D}(\bs{u}_\eps):\bs{D}(\bs{v})\dd x \dd t}\\
      \quad  +{\displaystyle\,\eps\int_Q\big(\big|\bs{D}(\bs{u}_\eps)\big|^{r-2}\bs{D}(\bs{u}_\eps):\bs{D}(\bs{v})+|\bs{u}_\eps|^{r-2}\bs{u}_\eps\cdot\bs{v}\big)\dd x\dd t}\\
        ={\displaystyle\int_Q\bs{f}\cdot\bs{v}\dd x \dd t\qquad\quad 
 \text{ for all  } \bs{v}\in L^r\big(0,T; \bs{W}_{\mathrm{per},\diver}^{1,r}(\Omega)\big),}        
       \end{array}\right\}
\vspace{0.6em}
\end{equation}
\begin{equation}
 \label{4.12}
 \left.\begin{array}{l}
        {\displaystyle\int_0^T\big\langle\omega'_\eps(t),\varphi(t)\big\rangle_{W_{\mathrm{per}}^{1,r}}\dd t+\int_Q\varphi\bs{u}_\eps\cdot\nabla\omega_\eps\dd x\dd t}\\
    \quad    {\displaystyle+\int_Q\frac{k_\eps^+}{\eps+\omega_\eps^+}\,\nabla\omega_\eps\cdot\nabla\varphi\dd x\dd t+\int_Q \omega_\eps^+\omega_\eps\varphi\dd x\dd t}\\
     \quad   {\displaystyle+\,\eps\int_Q\big(|\nabla\omega_\eps|^{r-2}\nabla\omega_\eps\cdot\nabla\varphi+|\omega_\eps|^{r-2}\omega_\eps\varphi\big)\dd x\dd t}\\
        ={\displaystyle\eps\int_Q\big(\underline{\omega}(t)\big)^{r-1}\varphi\dd
          x\dd t \qquad\  \text{for all }\varphi\in L^r\big(0,T;W_{\mathrm{per}}^{1,r}(\Omega)\big),}
       \end{array}\right\}
\vspace{0.6em}
\end{equation}
\begin{equation}
 \label{4.13}
 \left.\begin{array}{l}
        {\displaystyle\int_0^T\big\langle k'_\eps(t),z(t)\big\rangle_{W_{\mathrm{per}}^{1,r}}\dd t+\int_Q z\bs{u}_\eps\cdot\nabla k_\eps\dd x\dd t}\\
       \quad
       {\displaystyle+\int_Q\frac{k_\eps^+}{\eps{+}\omega_\eps^+}\,\nabla
         k_\eps {\cdot}\nabla z\dd x\dd t
  -\nu_0\int_Q \frac{k_\eps^+}{\eps{+}\omega_\eps^+{+}\eps k_\eps^+}
  \, \big|\bs{D}(\bs{u}_\eps)\big|^2z\dd x\dd t}\\
     \quad    {\displaystyle+\,\alpha_2\int_Q k_\eps\omega_\eps^+ z \dd x\dd t
        +\eps\int_Q\!\! \big(|\nabla k_\eps|^{r-2}\nabla
        k_\eps{\cdot}\nabla z {+}|k_\eps|^{r-2}k_\eps z\big)\dd x\dd t}\\
        {\displaystyle=\eps\int_Q \big(\kappa(t)\big)^{r-1}z\dd x\dd t
    \qquad \qquad \text{for all } z\in
    L^r\big(0,T;W_{\mathrm{per}}^{1,r}(\Omega)\big),} 
       \end{array}\right\}
\vspace{0.8em}
\end{equation}
\end{subequations}
\begin{equation}
 \label{4.14}
 \bs{u}_\eps(0)=\bs{u}_{0,\eps},\quad\omega_\eps(0)=\omega_{0,\eps},\quad k_\eps(0)=k_{0,\eps}.
\end{equation}
\end{proposition}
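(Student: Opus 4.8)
The plan is to rewrite the regularized system \eqref{eq:4Approx} under the periodic conditions \eqref{1.5} and the initial data \eqref{eq:4.IC} as a single abstract Cauchy problem
\[
 U'+\mathcal A(t,U)=\mathcal F(t)\ \text{ in } \mathbb V^*,\qquad U(0)=U_0,
\]
and then to invoke the abstract existence theorem for pseudo-monotone evolution equations of Appendix~\ref{s.A} (Theorem~\ref{thm:A}, following \cite[Thm.\,8.9]{Roub13NPDE}). Here I would work on the Gelfand triple $\mathbb V\hookrightarrow\mathbb H\hookrightarrow\mathbb V^*$ with the separable reflexive product spaces $\mathbb V=\bs{W}^{1,r}_{\mathrm{per},\diver}(\Omega)\times W^{1,r}_{\mathrm{per}}(\Omega)\times W^{1,r}_{\mathrm{per}}(\Omega)$ and $\mathbb H=\bs{L}^2(\Omega)\times L^2(\Omega)\times L^2(\Omega)$, with dense continuous embeddings, and set $U=(\bs{u},\omega,k)$, $U_0=(\bs{u}_{0,\eps},\omega_{0,\eps},k_{0,\eps})\in\mathbb H$ by \eqref{eq:4.IC}. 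The operator $\mathcal A(t,\cdot)\colon\mathbb V\to\mathbb V^*$ collects all spatial terms on the left-hand sides of \eqref{4.11}--\eqref{4.13}: the block-diagonal $r$-Laplacian part $\eps\bigl({-}\diver(|\nabla\,\cdot\,|^{r-2}\nabla\,\cdot\,)+|\,\cdot\,|^{r-2}\,\cdot\,\bigr)$ on each component, the convective operators, the degenerate-viscosity diffusion operators with coefficient $k^+/(\eps+\omega^+)$, the reaction terms $\omega^+\omega$ and $\alpha_2 k\omega^+$, and the truncated production term $-\nu_0\frac{k^+}{\eps+\omega^++\eps k^+}\,|\bs{D}(\bs{u})|^2$; the load $\mathcal F$ gathers $\bs{f}$, $\eps(\underline\omega(t))^{r-1}$ and $\eps(\kappa(t))^{r-1}$ and lies in $L^{r'}(0,T;\mathbb V^*)$ because $r'<2$, $\bs{f}\in\bs{L}^2(Q)$, and $\underline\omega,\kappa\in L^\infty(0,T)$.

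The structural observation that makes the choice $r>3$ work is the Morrey embedding $W^{1,r}_{\mathrm{per}}(\Omega)\hookrightarrow\hookrightarrow C(\bar\Omega)$, valid in the spatial dimension $N=3$. It renders all truncated coefficients harmless: one has $0\le\frac{k^+}{\eps+\omega^++\eps k^+}\le\frac1\eps$ uniformly, while $0\le\frac{k^+}{\eps+\omega^+}\le\frac1\eps\|k\|_{C(\bar\Omega)}$ is controlled by $\|U\|_{\mathbb V}$; in particular $\frac{k^+}{\eps+\omega^+}\bs{D}(\bs{u})\in L^{r/2}(0,T;L^r(\Omega))$ whenever $U\in L^r(0,T;\mathbb V)$, and the corresponding bilinear form is continuous on $\mathbb V$ since $2/r+1/r\le1$. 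With this in hand, the hypotheses of Theorem~\ref{thm:A} to be verified are: $\mathcal A$ is a Carath\'eodory map with growth $\|\mathcal A(t,U)\|_{\mathbb V^*}\le C\bigl(1+\|U\|_{\mathbb V}^{r-1}\bigr)$ (the $r$-Laplacian contributes the leading term, all other contributions being of strictly lower polynomial order after using $W^{1,r}\hookrightarrow C(\bar\Omega)$), and $\mathcal A$ is coercive, $\langle\mathcal A(t,U),U\rangle\ge\eps c_r\|U\|_{\mathbb V}^r-C$. For the latter, the convective contributions vanish because $\int_\Omega(\bs{u}\cdot\nabla\bs{u})\cdot\bs{u}=0$ for $\diver\bs{u}=0$ with periodic boundary conditions, and likewise $\int_\Omega\omega\,\bs{u}\cdot\nabla\omega=\int_\Omega k\,\bs{u}\cdot\nabla k=0$; the diffusion terms and the reaction terms $\int_\Omega(\omega^+)^2$, $\alpha_2\int_\Omega\omega^+k^2$ carry the favorable sign; and the only term with the wrong sign, $-\nu_0\int_\Omega\frac{k^+}{\eps+\omega^++\eps k^+}|\bs{D}(\bs{u})|^2k$, is bounded in modulus by $\frac{C\nu_0}{\eps}\|k\|_{C(\bar\Omega)}\|\bs{u}\|_{\bs{W}^{1,r}}^2\le\frac{C\nu_0}{\eps}\|U\|_{\mathbb V}^3$, which is absorbed by $\eps c_r\|U\|_{\mathbb V}^r$ since $3<r$.

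The main work, and the step I expect to be the principal obstacle, is to verify that $\mathcal A(t,\cdot)$ is pseudo-monotone. I would decompose $\mathcal A$ into its monotone, bounded, hemicontinuous principal part (the block-diagonal $r$-Laplacian, in fact strictly monotone) plus a strongly continuous perturbation made of all remaining terms. To handle the perturbation, observe that if $U_n\weak U$ in $L^r(0,T;\mathbb V)$ with $\{U_n'\}$ bounded in $L^{r'}(0,T;\mathbb V^*)$, then the Aubin--Lions lemma applied with the compact embedding $\mathbb V\hookrightarrow\hookrightarrow C(\bar\Omega)^3\hookrightarrow\mathbb V^*$ yields, along a subsequence, $U_n\to U$ strongly in $L^r(0,T;C(\bar\Omega)^3)$ and a.e.\ on $Q$, with uniform bounds in $L^r(0,T;C(\bar\Omega))$ for $\omega_n^+,k_n^+$; this lets one pass to the limit in the convective terms (strong $\times$ weak), in the degenerate diffusion and reaction terms (bounded coefficients converging a.e.\ tested against weakly convergent gradients), and, once the standard Minty/monotonicity argument has upgraded $\bs{u}_n\weak\bs{u}$ to strong convergence of $\bs{D}(\bs{u}_n)$ in $L^r(Q)$ along a subsequence (using $\limsup\langle\mathcal A(t,U_n),U_n\rangle\le\langle\mathcal A(t,U),U\rangle$), also in the quadratic production term. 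Hence $\mathcal A$ is pseudo-monotone, and Theorem~\ref{thm:A} provides $U=(\bs{u}_\eps,\omega_\eps,k_\eps)\in L^r(0,T;\mathbb V)$ with $U'\in L^{r'}(0,T;\mathbb V^*)$ solving \eqref{4.11}--\eqref{4.13}; the claimed regularities \eqref{4.9a}--\eqref{4.9c} and the attainment of the initial datum \eqref{4.14} in $\mathbb H$ then follow from the Lions--Magenes embedding $\{v\in L^r(0,T;\mathbb V):v'\in L^{r'}(0,T;\mathbb V^*)\}\hookrightarrow C([0,T];\mathbb H)$.
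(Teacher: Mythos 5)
Your global strategy coincides with the paper's: the same Gelfand triple $\bfV=\bs{W}^{1,r}_{\mathrm{per},\diver}(\Omega)\times W^{1,r}_{\mathrm{per}}(\Omega)\times W^{1,r}_{\mathrm{per}}(\Omega)\subset\bfH\subset\bfV^*$, the same abstract Theorem~\ref{thm:A}, the forcing terms $\bs f$, $\eps\underline\omega^{r-1}$, $\eps\kappa^{r-1}$ moved to the right-hand side, and essentially the same boundedness and coercivity arguments (vanishing of the convective terms by $\diver\bs u=0$, absorption of the truncated production term into the $\eps$ $r$-Laplacian via Young's inequality and $r>3$). The gap is in the step you yourself flag as the principal obstacle, the pseudo-monotonicity. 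First, condition \eqref{eq:A.Assum.c} of Theorem~\ref{thm:A} is a property of the \emph{stationary} operator $A:\bfV\to\bfV^*$: the test sequences converge weakly in $\bfV$ only, and no bound on time derivatives is available at that level. Your verification instead takes $U_n\weak U$ in $L^r(0,T;\bfV)$ with $U_n'$ bounded in $L^{r'}(0,T;\bfV^*)$ and invokes Aubin--Lions; this verifies pseudo-monotonicity of the induced parabolic operator with respect to the space $W=\{v\in L^r(\bfV):v'\in L^{r'}(\bfV^*)\}$ (a Lions-type framework), not the hypothesis of the theorem you cite. The mismatch is repairable, and in fact Aubin--Lions is unnecessary: for $r>3$ the compact embedding $W^{1,r}(\Omega)\Subset C(\overline\Omega)$ already turns weak convergence in $\bfV$ into uniform convergence of $(\bs u_m,\omega_m,k_m)$, which is exactly what the paper exploits in \eqref{eq:UniformCvg}.

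Second, and more seriously, your structural claim that $A$ splits into a strictly monotone $r$-Laplacian part plus a \emph{strongly continuous} perturbation is false for the degenerate diffusion terms $-\diver\bigl(\tfrac{k^+}{\eps+\omega^+}\nabla\omega\bigr)$, $-\diver\bigl(\tfrac{k^+}{\eps+\omega^+}\nabla k\bigr)$ and the corresponding term in the momentum equation: these are linear in the weakly convergent gradients (so the associated functionals converge only weakly in $\bfV^*$, the coefficient converging uniformly does not help), and they are not monotone either, since the coefficient depends on the unknown. Hence the standard calculus ``monotone $+$ totally continuous $\Rightarrow$ pseudo-monotone'' does not cover them, and the Minty upgrade you mention is asserted only for $\bs D(\bs u_n)$, whereas the liminf inequality in \eqref{eq:A.Assum.c} also contains terms quadratic in $\nabla\omega_m$ and $\nabla k_m$. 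What is actually needed is the argument of the paper's Steps 3--4 in Appendix~\ref{s.A}: from $\limsup\langle A(U_m),U_m{-}U\rangle\le0$ one extracts \emph{strong} convergence of all components of $U_m$ in $\bfV$, using the quantitative monotonicity $(\Phi_r(\bs\xi)-\Phi_r(\bs\eta))\cdot(\bs\xi-\bs\eta)\ge 2^{2-r}|\bs\xi-\bs\eta|^r$ of the $\eps$-regularization to control $\|U_m-U\|_\bfV^r$, after showing term by term in \eqref{A1.16} that the convective parts vanish (uniform convergence plus $\diver\bs u_m=0$), the degenerate diffusion differences split into a uniformly vanishing coefficient error plus a non-negative quadratic form, and the sign-indefinite production difference $K_{9,m}$ tends to zero because it is an $L^{r/2}$-bounded density multiplied by $k_m-k\to0$ uniformly. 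None of these points is carried out in your proposal, so as written the pseudo-monotonicity hypothesis of Theorem~\ref{thm:A} is not established.
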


The proof of Proposition~\ref{p4.1} is the content of Appendix \ref{s.A}.
Observing the separability of $\bs{W}_{\mathrm{per},\diver}^{1,r}(\Omega)$ and
$W_{\mathrm{per}}^{1,r}(\Omega)$ and using \eqref{4.9}, a routine argument
yields that the system \eqref{eq:ApproxSys} is equivalent to the following
conditions for a.a.\ $t\in [0,T]$:
\begin{subequations}
\label{eq:AppSyst.a.a.}
\begin{equation}
  \label{4.15}
  \left.\begin{array}{l}
    {\displaystyle\big\langle\bs{u}'_\eps(t),\bs{w}\big\rangle_{W_{\mathrm{per},\diver}^{1,r}}
    +\int_\Omega \!\! \Big(\big(\bs{u}_\eps(t){\cdot}\nabla
         \bs{u}_\eps (t) \big) 
   {\cdot }\bs{w} +\nu_0 \frac{k_\eps^+(t)}{\eps{+}\omega_\eps^+(t)}\,\bs{D}\big(\bs{u}_\eps(t)\big){:}\bs{D}(\bs{w})\Big)\dd x}\\[0.5em]
      \quad    {\displaystyle+\,\eps\int_\Omega\big(\big|\bs{D}\big(\bs{u}_\eps(t)\big)\big|^{r-2}\bs{D}\big(\bs{u}_\eps(t)\big):\bs{D}(\bs{w})+\big|\bs{u}_\eps(t)\big|^{r-2}\bs{u}_\eps(t)\cdot\bs{w}\big)\dd x}\\[0.5em]
        {\displaystyle=\int_\Omega\bs{f}(t)\cdot\bs{w}\dd
          x\qquad\qquad \text{for all }\bs{w}\in\bs{W}_{\mathrm{per},\diver}^{1,r}(\Omega),}
 \end{array}\right\}
 \end{equation}
\begin{equation}
 \label{4.16}
 \left.\begin{array}{l}
 \displaystyle\big\langle\omega'_\eps(t),\psi\big\rangle_{W_{\mathrm{per}}^{1,r}}
          + \int_\Omega 
          \!\!\Big( \psi\bs{u}_\eps(t)\cdot\nabla\omega_\eps(t)+
          \frac{k_\eps^+(t)}{\eps{+} 
            \omega_\eps^+(t)}\,\nabla\omega_\eps(t){\cdot}\nabla\psi
          \Big) \dd x \\[0.5em]
 \quad \displaystyle +\int_\Omega\!\!\Big( \omega_\eps^+(t)\omega_\eps(t)\psi
        +\eps \big(\big|\nabla\omega_\eps(t)\big|^{r-2}
          \nabla\omega_\eps(t)\cdot\nabla\psi
          +\big|\omega_\eps(t)\big|^{r-2} \omega_\eps(t)\psi\big) \Big)
          \dd x \\[0.5em]
        {\displaystyle=\eps\big(\underline{\omega}(t)\big)^{r-1}\int_\Omega\psi\dd
          x \qquad\qquad \text{for all } \psi\in W_{\mathrm{per}}^{1,r}(\Omega),}
       \end{array}\right\}
\end{equation}
\begin{equation}
 \label{4.17}
 \left.\begin{array}{l}
        {\displaystyle\big\langle
          k'_\eps(t),z\big\rangle_{W_{\mathrm{per}}^{1,r}}+\int_\Omega \!\Big(
          z\bs{u}_\eps(t)\cdot\nabla k_\eps(t) +
          \frac{k_\eps^+(t)}{\eps+\omega_\eps^+(t)}\,\nabla
          k_\eps(t)\cdot\nabla z \Big) \dd x}
  \\[0.8em]
        \;\;{\displaystyle-\nu_0\int_\Omega
          \frac{k_\eps^+(t)}{\eps{+}\omega_\eps^+(t) {+}\eps k_\eps^+(t)} 
         \,\big|\bs{D}\big(\bs{u}_\eps(t)\big)\big|^2z\dd x  +
        \alpha_2\int_\Omega k_\eps(t)\omega_\eps^+(t) z\dd x}
 \\[0.8em]
        \;\;{\displaystyle+\,\eps\int_\Omega\big(\big|\nabla k_\eps(t) 
          \big|^{r-2}\nabla k_\eps(t)\cdot\nabla z+\big|k_\eps(t) 
                 \big|^{r-2}k_\eps(t)z\big)\dd x}
  \\[0.8em]
        {\displaystyle=\eps\big(\kappa(t)\big)^{r-1}\int_\Omega z\dd x
         \qquad\qquad \text{for all }  z\in W_{\mathrm{per}}^{1,r}(\Omega)}
       \end{array}\right\}
\end{equation}
\end{subequations}
We notice that the set
$\mathcal{N}\subset[\,0,T\,]$ of measure zero of those $t$ where
\eqref{eq:AppSyst.a.a.} fails, does not depend on
$(\bs{w},\psi,z)$. More specifically, if $\eps=\eps_m>0$ with
$\lim\limits_{m\to\infty}\eps_m=0$, then 
$\mathcal{N}$ can be chosen independently of $m$.

The variational identities in \eqref{eq:AppSyst.a.a.} are the
point of departure for the proof of a series of the a priori
estimates for 
$\{\bs{u}_\eps,\omega_\eps,k_\eps\}$ we are going to derive in
Subsections~\ref{s5.2}--\ref{s5.4}.
%

\subsection{Upper and lower bounds for
  $\{\omega_\eps,k_\eps\}$}\label{s5.2} 

Let $\underline{\omega}$, $\overline{\omega}$ and $\kappa$ be as in
\eqref{4.1} and $r>3$ as chosen in Section \ref{s5.1}. The
following result provides pointwise upper and lower bounds that are
obtained via classical comparison arguments for weak solutions of
the scalar parabolic equations for $\omega$ and $k$, cf.\ \eqref{1.3} and
\eqref{1.4}, respectively.  


\begin{lemma}\label{l4.2}
 Let be $\{\bs{u}_\eps,\omega_\eps,k_\eps\}$ a
 triple according to Proposition~\ref{p4.1} with $r>3$. Then, 
 \begin{equation} 
  \label{4.18}
  \underline{\omega}(t)\le\omega_\eps(x,t)\le\overline{\omega}(t) 
\quad \text{ and } \quad \kappa(t)\le k_\eps(x,t)
 \end{equation}
for a.a.\ $(x,t)\in Q$ and for all $\eps>0$.
\end{lemma}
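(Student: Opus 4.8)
The plan is to establish the three pointwise bounds in \eqref{4.18} by the classical parabolic comparison technique: for each inequality we form a suitable truncated difference (e.g.\ $(\omega_\eps - \overline\omega)^+$), use it as a test function in the corresponding weak formulation \eqref{4.16} or \eqref{4.17}, and deduce that this nonnegative quantity vanishes by a Gronwall argument. Since $\overline\omega$, $\underline\omega$, and $\kappa$ are spatially constant, the diffusion and $r$-Laplacian terms cooperate, and the comparison functions were precisely chosen so that the reaction terms match up with the correct sign.

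First I would treat the upper bound $\omega_\eps \le \overline\omega$. Observe that $\overline\omega(t)=\omega^*/(1+t\omega^*)$ solves $\overline\omega' = -(\overline\omega)^2$ with $\overline\omega(0)=\omega^* \ge \omega_{0,\eps}$. Testing the difference of \eqref{4.16} (written in the a.a.-$t$ form) and the ODE for $\overline\omega$ against $\psi = (\omega_\eps(t)-\overline\omega(t))^+ \in W^{1,r}_{\mathrm{per}}(\Omega)$, one gets, after noting $\nabla\overline\omega = 0$ and $\int_\Omega \bs u_\eps\cdot\nabla\big(\tfrac12((\omega_\eps-\overline\omega)^+)^2\big)\dd x = 0$ by $\diver\bs u_\eps = 0$ and periodicity,
\[
 \frac12\frac{\mathrm d}{\mathrm d t}\int_\Omega \big((\omega_\eps-\overline\omega)^+\big)^2\dd x
  + \big(\text{nonneg.\ diffusion and }r\text{-Laplacian terms}\big)
 = -\int_\Omega\big(\omega_\eps^+\omega_\eps - (\overline\omega)^2\big)(\omega_\eps-\overline\omega)^+\dd x + \eps\int_\Omega\big((\underline\omega)^{r-1}-(\overline\omega)^{r-1}\big)(\omega_\eps-\overline\omega)^+\dd x.
\]
On the set where $(\omega_\eps-\overline\omega)^+>0$ we have $\omega_\eps>\overline\omega>0$, so $\omega_\eps^+\omega_\eps-(\overline\omega)^2 = \omega_\eps^2-(\overline\omega)^2 \ge 0$, making the first term on the right $\le 0$; and $\underline\omega \le \overline\omega$ gives $(\underline\omega)^{r-1}-(\overline\omega)^{r-1}\le 0$, so the second term is $\le 0$ as well. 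Since the initial datum of $(\omega_\eps-\overline\omega)^+$ is zero, Gronwall forces $(\omega_\eps-\overline\omega)^+\equiv 0$. The lower bound $\underline\omega \le \omega_\eps$ is entirely analogous, testing against $-(\omega_\eps-\underline\omega)^- = (\underline\omega-\omega_\eps)^+\ge 0$ and using $\underline\omega' = -(\underline\omega)^2$, $\underline\omega(0)=\omega_*\le\omega_{0,\eps}$, together with $(\underline\omega)^{r-1}-(\overline\omega)^{r-1}\le 0$ having the favourable sign now because the test function carries the opposite sign; on the relevant set $0<\omega_\eps<\underline\omega$ so again $\omega_\eps^+\omega_\eps-(\underline\omega)^2 = \omega_\eps^2-(\underline\omega)^2\le 0$ with the right sign.

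For $\kappa \le k_\eps$ I would test \eqref{4.17} minus the ODE $\kappa' = -\alpha_2\omega^*\kappa$ against $-(k_\eps-\kappa)^- = (\kappa-k_\eps)^+\ge 0$. The convection term again drops by $\diver\bs u_\eps=0$; the diffusion and $r$-Laplacian terms have the right sign since $\nabla\kappa=0$. The production term $-\nu_0\frac{k_\eps^+}{\eps+\omega_\eps^++\eps k_\eps^+}|\bs D(\bs u_\eps)|^2$ has a sign that, when multiplied by the negative-valued test function, contributes with the favourable sign (it only helps push $k_\eps$ up); and the coercivity term $\eps\kappa^{r-1}$ was added precisely to dominate. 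The remaining reaction comparison reduces to controlling $\alpha_2 k_\eps\omega_\eps^+ - \alpha_2\omega^*\kappa$ on the set $0 < k_\eps < \kappa$: there $k_\eps\omega_\eps^+ \le \kappa\,\overline\omega(t) \le \kappa\,\omega^*$ using the already-proved bound $\omega_\eps\le\overline\omega\le\omega^*$, so this term also has the right sign, and Gronwall closes the argument.

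\textbf{Main obstacle.} The genuinely delicate point is the logical ordering: the bound $\kappa\le k_\eps$ relies on $\omega_\eps\le\overline\omega\le\omega^*$, so the $\omega$-estimates must be established first and used as an input. A secondary technical issue is justifying that the truncations $(\omega_\eps-\overline\omega)^+$ etc.\ are admissible test functions in $W^{1,r}_{\mathrm{per}}(\Omega)$ and that the chain rule / integration-by-parts identities (in particular $\int_\Omega \bs u_\eps\cdot\nabla((\cdot)^+)^2\dd x=0$ and $\langle (\omega_\eps)', (\omega_\eps-\overline\omega)^+\rangle = \tfrac{\mathrm d}{\mathrm d t}\tfrac12\|(\omega_\eps-\overline\omega)^+\|_{L^2}^2$) hold in the weak setting \eqref{4.9}; this is standard but requires the Lipschitz regularity of $\xi\mapsto\xi^+$ and a density/approximation argument, and it is here that one uses $\omega_\eps,k_\eps\in C([0,T];L^2(\Omega))\cap L^r(0,T;W^{1,r}_{\mathrm{per}}(\Omega))$ with $r>3$ from Proposition \ref{p4.1}.
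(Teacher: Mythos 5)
Your proposal follows essentially the same route as the paper: pointwise comparison with the spatially constant functions $\underline{\omega}$, $\overline{\omega}$, $\kappa$, implemented by testing \eqref{4.16} and \eqref{4.17} with truncated differences, the chain rule for $\langle w',w^\pm\rangle$, and a Gronwall-type conclusion from zero initial data. The only structural difference is the order of the two $\omega$-bounds: the paper proves $\omega_\eps\ge\underline{\omega}$ first and uses it to control the artificial source $\eps(\underline{\omega})^{r-1}$ in the upper-bound step, whereas you prove the upper bound first and absorb that source via $\omega_\eps>\overline{\omega}\ge\underline{\omega}$ on the support of $(\omega_\eps-\overline{\omega})^+$; this alternative ordering is also valid. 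Two slips should be corrected. First, the comparison function $\kappa(t)=k_*(1{+}t\omega^*)^{-\alpha_2}$ satisfies $\kappa'=-\alpha_2\,\overline{\omega}(t)\,\kappa$, not $\kappa'=-\alpha_2\omega^*\kappa$; as written, you subtract a false identity. The repair is immediate and already contained in your own chain $k_\eps\omega_\eps^+\le\kappa\,\overline{\omega}\le\kappa\,\omega^*$: with the correct ODE the needed sign condition is precisely $k_\eps\omega_\eps^+\le\kappa\,\overline{\omega}$ on $\{k_\eps<\kappa\}$, which is how the paper closes this step. Second, in the lower bound for $\omega_\eps$ the relevant zeroth-order comparison is between $\eps(\underline{\omega})^{r-1}$ and $\eps|\omega_\eps|^{r-2}\omega_\eps$ on $\{\omega_\eps<\underline{\omega}\}$, handled by the monotonicity of $s\mapsto|s|^{r-2}s$; the quantity $(\underline{\omega})^{r-1}-(\overline{\omega})^{r-1}$ plays no role there. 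Finally, on $\{k_\eps<\kappa\}$ you may not assume $k_\eps>0$ a priori; either first show $k_\eps\ge0$ by testing with $k_\eps^-$ (as the paper does) or observe that $k_\eps\omega_\eps^+\le 0\le\kappa\,\overline{\omega}$ wherever $k_\eps\le0$, so the sign argument still goes through.
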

\begin{proof}
 For notational simplicity, we set
 $\bs{u}\equiv\bs{u}_\eps$, $\omega\equiv\omega_\eps$ and $k\equiv
 k_\eps$ within this proof.

\underline{Step 1: $\omega \geq \underline\omega$.}  The function
$\psi=\big(\omega(\cdot,t)-\underline{\omega}(t)\big)^-$ is an admissible
test function for \eqref{4.16}. Since $\underline{\omega}(t)$
does not depend on $x$ 
we have $\frac12\nabla (\psi^2)=\psi\nabla \omega$ and $\nabla
\omega \cdot \nabla \psi=|\nabla \psi|^2\geq 0$. Using $
\underline{\omega} >0$ and the
monotonicity of $\omega \mapsto |\omega|^{r-2}\omega$ we arrive at 
\begin{align}
\nonumber
&  \big\langle\omega'(t),\big(\omega(t)-\underline{\omega}(t)\big)^- 
 \big\rangle_{W_{\mathrm{per}}^{1,r}}+
\int_\Omega\omega^2\big(\omega-\underline{\omega}(t)\big)^-\dd x\\ 
\label{4.19}
& \le
\eps\int_\Omega\big(\big(\underline{\omega}(t)\big)^{r-1}-
  |\omega|^{r-2} \omega \big)
\big(\omega-\underline{\omega}(t)\big)^-\dd
x \qquad 
\le 0
\end{align}
for a.a.\ $t\in[\,0,T\,]$. By construction we have
$\underline{\omega}'(t)=\frac{\mathrm d}{\mathrm d
  t}\underline\omega(t) =-\big(\underline{\omega}(t)\big)^2$. 
Identifying $\underline{\omega}$ with a function in
$C^1\big([0,T\,];W_{\mathrm{per}}^{1,r}(\Omega)\big)$ the estimate
\eqref{4.19} leads to 
\[
\big\langle\omega'(t)-\underline{\omega}'(t),
\big(\omega(t){-}\underline{\omega}(t)\big)^-\big\rangle_{W_{\mathrm{per}}^{1,r}}\le 
-\int_\Omega\big(\omega^2-\big(\underline{\omega}(t)\big)^2\big)\big(\omega-\underline{\omega}(t)\big)^-\dd x
\le 0.
\]
By \eqref{4.1} and \eqref{4.7}, we have
$\omega(x,0)-\underline{\omega}(0)\ge 0$, which means $\psi(x,0)=0$
for a.a. $x\in\Omega$.  Using a slight modification of
\cite[pp.\,290--291]{Ref18} we find 
\begin{align*}
\int_\Omega \frac12\psi(t)^2\dd x = \int_\Omega \frac12\psi(0)^2 \dd x
+ \int_0^t \langle \psi',\psi\rangle_{W_{\mathrm{per}}^{1,r}} \dd t =
0 + \int_0^t  \langle \omega'{-}\underline{\omega}',
(\omega{-}\underline{\omega})^- \rangle_{W_{\mathrm{per}}^{1,r}} \dd t
\leq 0.   
\end{align*}
Hence, we conclude $\psi(t)=0$ for all $t$, which means that 
\begin{equation}
 \label{4.20}
 \omega(x,t)\ge\underline{\omega}(t)\quad\text{ for a.a. }\; (x,t)\in Q.
\end{equation}

\underline{Step 2: $\omega \leq \overline\omega$.}  Next, we insert
$\psi=\big(\omega(\cdot,t){-}\overline{\omega}(t)\big)^+$ in \eqref{4.16} and
argue as in Step 1:
\[
\big\langle\omega',(\omega{-}\overline{\omega})^+
 \big\rangle_{W_{\mathrm{per}}^{1,r}}+ \int_\Omega\omega^2 
 \big(\omega-\overline{\omega}\big)^+\dd x
\le\eps \! \int_\Omega \!\big((\underline{\omega})^{r-1} 
  {-}\omega^{r-1}\big)\big(\omega{-}\overline{\omega}\big)^+\dd x 
\le 0.
\]
For the last estimate we used $\omega\ge\underline{\omega}$,
which was obtained in Step 1. Hence, as above, 
\[
\frac{\mathrm d}{\mathrm d t} \int_\Omega \frac12\psi(t)^2\dd x = 
\big\langle\omega'(t)-\dot{\overline{\omega}}(t),\big(\omega(t)-\overline{\omega}(t)\big)^+\big\rangle_{W_{\mathrm{per}}^{1,r}}
\leq -\int_\Omega\big(\omega^2 {-} \overline{\omega}^2\big)
\big(\omega {-} \overline{\omega}\big)^+\dd x \le 0
\]
for a.a. $t\in[\,0,T\,]$. Again by \eqref{4.1} and \eqref{4.7},
we have $\psi(0)=0$ a.e.\ in $\Omega$ and conclude 
\begin{equation}
 \label{4.21}
 \omega(x,t)\le\overline{\omega}(t)\quad\text{ for a.a.\ } (x,t)\in Q.
\end{equation}

\underline{Step 3: $k \geq \kappa$.}  We first insert $z=k^-(\cdot,t)$
into \eqref{4.17} and find $k\ge 0$ a.e. in $Q$. Next, we insert the
test function $z(x,t)=\big(k(x,t)-\kappa(t)\big)^-$ and obtain as
above
\[
 \big\langle k'(t),\big(k(t)-\kappa(t)\big)^-\big\rangle_{W_{\mathrm{per}}^{1,r}}+\alpha_2\int_\Omega k(t)\omega(t)\big(k{-}\kappa(t)\big)^-\dd x\le 0
\]
for a.a. $t\in[\,0,T\,]$. By construction $\kappa$ satisfies 
$\kappa'(t)=-\alpha_2\kappa(t)\overline{\omega}(t)$ for all
$t\in[\,0,T\,]$. It follows
\begin{align*}
&\frac{\mathrm d}{\mathrm d t} \int_\Omega
\frac12\Big(\big(k(t){-}\kappa(t)\big)^-\Big)^2 \dd x = 
 \big\langle k'(t)-\dot{\kappa}(t),\big(
 k(t)-\kappa(t)\big)^-\big\rangle_{W_{\mathrm{per}}^{1,r}} \\
&\le 
-\alpha_2\int_\Omega
\big(k(t)\omega(t)-\kappa(t)\overline{\omega}(t)\big)
\big(k(t)-\kappa(t)\big)^-\dd  x \ \le \ 0.
\end{align*}
To see the last inequality, we use $\omega\le\overline{\omega}$
a.e. in $Q$ from Step 2, which gives
$k(x,t)\omega(x,t)\le\kappa(t)\overline{\omega}(t)$ for a.a. $x$ of
the set $\big\{x\in\Omega\: ; \: k(x,t) \leq \kappa(t)\big\}$. Since
$k(x,0)\ge \kappa(0)$ for  a.a. $x\in \Omega$ by \eqref{4.1} and
\eqref{4.7}  we obtain, as above, $ k(x,t) \geq \kappa(t)$ for a.a.\ $(x,t)\in Q$. 
Altogether the upper and lower bounds in \eqref{4.18} are established. 
\end{proof}

\subsection{Energy estimates for $(\bs u_\eps,\omega_\eps)$ and
  improved  estimates for $k_\eps$}
\label{s5.3}

For the subsequent estimates we fix the data 
\[
\mathfrak D= \{T, \; \bs{f}, \; \omega_*, \; \omega^*, \; k_*, \; r\}
\]
and will indicate constants that only depend on $\mathfrak D$ by
$\CD$. However, depending on the context the constants $\CD$ may be
different. We also define
the constant 
\[
 \beta_*=\frac{k_*}{(1+\omega^*)(1+T\omega^*)^{\alpha_2}},
\]
which according to Lemma \ref{l4.2} is a lower bound for
$k_\eps/(\eps{+}\omega_\eps)$.  This will allows us to derive the
standard estimates for $\bs u_\eps$ and $\omega_\eps$.

\begin{lemma}\label{l4.3}
There exists a constant $\CD>0$ such for all $\eps\in
{]0,1]}$ and all solutions $\{\bs{u}_\eps,\omega_\eps,k_\eps\}$ as in
Proposition~\ref{p4.1} we have the estimates 
\begin{subequations}
\label{eq:AppSolEE}
\begin{equation}
 \label{4.22}
 \left.\begin{array}{r}
   \displaystyle\|\bs{u}_\eps\|_{L^\infty(\bs{L}^2)}^2+ \int_Q
          \!
          \big(\beta_*+\frac{k_\eps}{\eps{+}\omega_\eps}\big) 
   \big|\bs{D}(\bs{u}_\eps)\big|^2\dd x\dd t  + \eps \int_Q\!\! 
   \big(\big|\bs{D}(\bs{u}_\eps) \big|^r {+} 
|\bs{u}_\eps|^r\big)\dd x\dd t \  \\[0.7em]
       \le   \CD \big(\|\bs{u}_{0,\eps}\|_{\bs{L}^2}^2+\|\bs{f}\|_{\bs{L}^2}^2\big),
       \end{array}\right\}
\end{equation}
\begin{equation}
 \label{4.23}
 \left.\begin{array}{r}
   \displaystyle\|\omega_\eps\|_{L^\infty(L^2)}^2+\int_Q\Big(\beta_* 
   +\frac{k_\eps}{\eps{+}\omega_\eps}\Big)|\nabla\omega_\eps|^2\dd x\dd t 
        + \eps\int_Q\big(|\nabla\omega_\eps|^r 
     +\omega_\eps^r\big)\dd x\dd t \quad \\[0.8em]
        \leq \CD \big(1+\|\omega_{0,\eps}\|_{L^2}^2 \big).
       \end{array}\right\}
\end{equation}
\end{subequations}
\end{lemma}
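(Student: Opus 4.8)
The plan is to test the weak formulations \eqref{4.15} and \eqref{4.16} with the solutions themselves, that is $\bs w = \bs u_\eps(t)$ and $\psi = \omega_\eps(t)$, and then integrate in time. Both choices are admissible since $\bs u_\eps(t)\in\bs W^{1,r}_{\mathrm{per},\diver}(\Omega)$ and $\omega_\eps(t)\in W^{1,r}_{\mathrm{per}}(\Omega)$ for a.a.\ $t$ by \eqref{4.9}. The convective terms $\int_\Omega (\bs u_\eps\cdot\nabla\bs u_\eps)\cdot\bs u_\eps\dd x$ and $\int_\Omega \psi\,\bs u_\eps\cdot\nabla\omega_\eps\dd x$ vanish because $\diver\bs u_\eps = 0$ and the periodic boundary conditions kill the boundary integral after integration by parts (write $(\bs u\cdot\nabla\bs u)\cdot\bs u = \tfrac12\bs u\cdot\nabla|\bs u|^2$, similarly $\psi\,\bs u\cdot\nabla\omega = \tfrac12\bs u\cdot\nabla(\omega^2)$). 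The duality term produces $\int_0^t\langle\bs u_\eps',\bs u_\eps\rangle\dd s = \tfrac12\|\bs u_\eps(t)\|^2_{\bs L^2} - \tfrac12\|\bs u_{0,\eps}\|^2_{\bs L^2}$ via the standard integration-by-parts lemma for Gelfand triples (the same one cited on pp.\,290--291 of \cite{Ref18}), and analogously for $\omega_\eps$.

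Next I would collect the remaining terms. For \eqref{4.15} this gives, for a.a.\ $t$,
\begin{align*}
&\tfrac12\|\bs u_\eps(t)\|^2_{\bs L^2} + \nu_0\int_0^t\!\!\int_\Omega \frac{k_\eps^+}{\eps+\omega_\eps^+}|\bs D(\bs u_\eps)|^2\dd x\dd s
 + \eps\int_0^t\!\!\int_\Omega\big(|\bs D(\bs u_\eps)|^r + |\bs u_\eps|^r\big)\dd x\dd s\\
&\qquad = \tfrac12\|\bs u_{0,\eps}\|^2_{\bs L^2} + \int_0^t\!\!\int_\Omega \bs f\cdot\bs u_\eps\dd x\dd s .
\end{align*}
For the right-hand side I use Korn's inequality (available under periodic boundary conditions on the divergence-free space, up to the mean-value handling of the pressure-free flow) together with Young's inequality: $\int_\Omega\bs f\cdot\bs u_\eps\dd x \le \|\bs f\|_{\bs L^2}\|\bs u_\eps\|_{\bs L^2} \le \tfrac12\|\bs f\|^2_{\bs L^2} + \tfrac12\|\bs u_\eps\|^2_{\bs L^2}$, and then Gronwall's lemma absorbs the $\|\bs u_\eps\|^2_{\bs L^2}$-term; taking the essential supremum over $t\in[0,T]$ gives the $L^\infty(\bs L^2)$-bound. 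To obtain the $\beta_*$-term in \eqref{4.22} I use the pointwise lower bound $k_\eps/(\eps+\omega_\eps) \ge \beta_*$ from Lemma~\ref{l4.2} (so that the coercive dissipation term controls $\beta_*\|\bs D(\bs u_\eps)\|^2_{L^2(L^2)}$ plus the $\eps$-$r$-Laplacian term), splitting $\nu_0 k_\eps^+/(\eps+\omega_\eps^+) = \nu_0\beta_* + \nu_0(k_\eps^+/(\eps+\omega_\eps^+)-\beta_*)$ with the second factor non-negative. Here I should be slightly careful: \eqref{4.22} has the full weight $\beta_* + k_\eps/(\eps+\omega_\eps)$ whereas the equation only yields $\nu_0 k_\eps/(\eps+\omega_\eps)$, so I retain both contributions (the $\beta_*$ piece coming from the estimate $k_\eps/(\eps+\omega_\eps)\ge\beta_*$ applied to half of the $\nu_0$-term, and the full $k_\eps/(\eps+\omega_\eps)$ from the other half), which costs only a harmless constant factor depending on $\nu_0$ and $\beta_*$, hence on $\mathfrak D$.

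For \eqref{4.16} tested with $\psi = \omega_\eps(t)$, the zeroth-order term $\int_\Omega\omega_\eps^+\omega_\eps\cdot\omega_\eps\dd x = \int_\Omega(\omega_\eps^+)^3\dd x \ge 0$ (using $\omega_\eps\ge\underline\omega>0$ from Lemma~\ref{l4.2}, so $\omega_\eps^+=\omega_\eps$) is dropped to the good side, the diffusion and $\eps$-$r$-Laplacian terms give the dissipation exactly as for $\bs u_\eps$, and the only source is $\eps\int_\Omega(\underline\omega(t))^{r-1}\omega_\eps\dd x \le \eps\,(\underline\omega(0))^{r-1}|\Omega|^{1/2}\|\omega_\eps\|_{L^2} \le C_{\mathfrak D} + \tfrac12\|\omega_\eps\|^2_{L^2}$ since $\underline\omega(t)\le\underline\omega(0)=\omega_*$ and $\eps\le1$. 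Gronwall and the supremum over $t$ then deliver \eqref{4.23}, with the $\beta_*$-splitting of the diffusion weight handled as before. I expect the main obstacle to be purely bookkeeping: making the duality-pairing integration-by-parts rigorous for the approximate solutions (which is genuinely legitimate here because $\bs u_\eps, \omega_\eps \in L^r(0,T;W^{1,r})$ with $\bs u_\eps', \omega_\eps'\in L^{r'}(0,T;(W^{1,r})^*)$, so the Gelfand-triple chain rule applies), and correctly tracking that every constant produced — from Korn, from Young, from Gronwall over $[0,T]$, and from the $\beta_*$/$\nu_0$ splittings — depends only on $\mathfrak D = \{T,\bs f,\omega_*,\omega^*,k_*,r\}$ and not on $\eps$.
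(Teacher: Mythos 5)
Your proposal is correct and follows essentially the same route as the paper: test \eqref{4.15} and \eqref{4.16} with $\bs{u}_\eps$ and $\omega_\eps$, use the lower bound $k_\eps/(\eps{+}\omega_\eps)\ge\beta_*$ from Lemma~\ref{l4.2}, and conclude with Young's and Gronwall's inequalities after integrating over $[0,t]$. The only superfluous element is the appeal to Korn's inequality, which is not needed here since the dissipation terms already appear in the exact form required on the left-hand sides of \eqref{4.22}--\eqref{4.23}.
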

\begin{proof}
  We insert the test functions $\bs{w}=\bs{u}_\eps$ and
  $\psi=\omega_\eps$ in \eqref{4.15} and \eqref{4.16}, respectively.
  Integrating over $[0,t]$ and using $ \frac{k_\eps}{\eps+\omega_\eps}
  \ge \beta_*$ a.e.\ in $Q$ (cf.\ \eqref{4.18}), the desired estimates 
 \eqref{eq:AppSolEE} are readily obtained by the
 aid of Gronwall's lemma.
\end{proof}

By \eqref{eq:4.IC} the approximative initial conditions satisfy
$ \sup_{0<\eps\leq 1} \big(\|\bs{u}_{0,\eps}\|_{\bs{L}^2} {+}
\|\omega_{0,\eps}\|_{L^2}\big)<\infty$.  Therefore all terms on the
left hand sides of \eqref{eq:AppSolEE} are bounded independently of
$\eps \in {]0,1]}$.

Of course, one obtains a trivial bound for $k_\eps$ in
$L^\infty(0,T;L^1(\Omega))$ by testing \eqref{4.17} with $z\equiv
1$. We include this result in the following non-trivial estimate
that implies uniform higher 
integrability of $k_\eps$ as well as suitable bounds for $\nabla
k_\eps$. For this we test \eqref{4.17} by $z=1-(1{+}k_\eps)^{-\delta}$
for $\delta \in {]0,1[}$, which is a well-known technique 
for treating diffusion equations with an $L^1$ right-hand side, see
e.g.\ \cite{BocGal89NEPE,Rako91,BoDaOr97}.

\begin{proposition}\label{pr:4.4} For given data $\mathfrak D$, $p\in
  {[1,2[}$, and $\delta \in {]0,1[}$, there exists $\CD^{p,\delta}>0$ such that for
  all $\eps \in {]0,1]}$ and all $\{\bs{u}_\eps,\omega_\eps,k_\eps\}$
  as in Proposition~\ref{p4.1}, we have the estimate
 \begin{equation}
  \label{4.24}
  \left.\begin{array}{l}
         {\displaystyle\|k_\eps\|_{L^\infty(0,T;L^1(\Omega))}+ 
 \int_Q\Big(k_\eps^{4p/3}+|\nabla k_\eps|^p+\frac{|\nabla k_\eps|^2} 
   {(1{+}k_\eps)^\delta}\Big)\dd x\dd t}
\\[0.8em]
 \hspace{7em} \displaystyle 
  + \,\eps \int_Q \Big(\frac{|\nabla k_\eps|^r}{(1{+}k_\eps)^{1+\delta}} +
 k_\eps^{r-1} \Big) \dd x \dd t 
\\[0.98em]
      \qquad   \le \CD^{p,\delta} 
  \big(1+\|\bs{u}_{0,\eps}\|_{\bs{L}^2(\Omega)}^2
        +\|k_{0,\eps}\|_{L^1(\Omega)} \big). 
          \end{array}\right\}
 \end{equation}
\end{proposition}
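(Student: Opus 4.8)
The plan is to test the weak equation \eqref{4.17} for $k_\eps$ with the bounded, monotone function $z(x,t)=1-\big(1{+}k_\eps(x,t)\big)^{-\delta}$, which by \eqref{4.9} is an admissible test function in $L^r(0,T;W^{1,r}_{\mathrm{per}}(\Omega))$. Since $z\in {[0,1[}$ and $\nabla z=\delta(1{+}k_\eps)^{-1-\delta}\nabla k_\eps$, the time-derivative term produces $\frac{\mathrm d}{\mathrm dt}\int_\Omega \Phi(k_\eps)\dd x$ with $\Phi(s)=\int_0^s\big(1-(1{+}\sigma)^{-\delta}\big)\dd\sigma$, a nonnegative convex primitive comparable to $s$ for large $s$; the convective term $\int_\Omega z\,\bs u_\eps\cdot\nabla k_\eps\dd x$ vanishes after integration by parts because $\diver\bs u_\eps=0$ and $z$ is a function of $k_\eps$ (so $z\nabla k_\eps$ is a gradient); the principal diffusion term gives the crucial good quantity $\delta\int_Q \frac{k_\eps^+}{\eps+\omega_\eps^+}\,\frac{|\nabla k_\eps|^2}{(1{+}k_\eps)^{1+\delta}}\dd x\dd t$, and combined with the lower bound $k_\eps/(\eps{+}\omega_\eps)\ge\beta_*$ from Lemma~\ref{l4.2} (together with $k_\eps\le 1{+}k_\eps$) yields control of $\int_Q \frac{|\nabla k_\eps|^2}{(1{+}k_\eps)^{\delta}}\dd x\dd t$; similarly the $\eps$-regularization term yields $\eps\delta\int_Q \frac{|\nabla k_\eps|^r}{(1{+}k_\eps)^{1+\delta}}\dd x\dd t$ plus $\eps\int_Q k_\eps^{r-1}z\dd x\dd t$. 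The remaining terms must be moved to the right-hand side or bounded: the decisive point is that $0\le z<1$, so the source term $\nu_0\int_Q \frac{k_\eps^+}{\eps+\omega_\eps^+{+}\eps k_\eps^+}|\bs D(\bs u_\eps)|^2 z\dd x\dd t$ is bounded simply by $\nu_0\int_Q \frac{k_\eps}{\eps+\omega_\eps}|\bs D(\bs u_\eps)|^2\dd x\dd t$, which is already controlled by $\CD(\|\bs u_{0,\eps}\|^2_{\bs L^2}+\|\bs f\|^2_{\bs L^2})$ via Lemma~\ref{l4.3}; the dissipative term $-\alpha_2\int_Q k_\eps\omega_\eps^+ z\dd x\dd t\le 0$ can be dropped; the source $\eps\int_Q(\kappa(t))^{r-1}z\dd x\dd t$ is bounded by $\eps\,|\Omega|\,T\,k_*^{r-1}$; and the initial term is $-\int_\Omega\Phi(k_{0,\eps})\dd x$, bounded below by $-C\|k_{0,\eps}\|_{L^1}$.

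After these manipulations I obtain, for a.a.\ $t$,
\[
\int_\Omega\Phi(k_\eps(t))\dd x + \delta\beta_*\!\int_0^t\!\!\int_\Omega\frac{|\nabla k_\eps|^2}{(1{+}k_\eps)^{\delta}}\dd x\dd s + \eps\delta\!\int_0^t\!\!\int_\Omega\frac{|\nabla k_\eps|^r}{(1{+}k_\eps)^{1+\delta}}\dd x\dd s + \eps\!\int_0^t\!\!\int_\Omega k_\eps^{r-1}\dd x\dd s \le \CD^{p,\delta}\big(1+\|\bs u_{0,\eps}\|^2_{\bs L^2}+\|k_{0,\eps}\|_{L^1}\big),
\]
using that $\Phi(s)\ge c_\delta s - C_\delta$ gives the $L^\infty(0,T;L^1(\Omega))$-bound for $k_\eps$. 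To convert the weighted gradient bound into the unweighted ones, I would use Hölder's inequality: for $p\in{[1,2[}$,
\[
\int_Q|\nabla k_\eps|^p\dd x\dd t = \int_Q\frac{|\nabla k_\eps|^p}{(1{+}k_\eps)^{p\delta/2}}(1{+}k_\eps)^{p\delta/2}\dd x\dd t \le \Big(\int_Q\frac{|\nabla k_\eps|^2}{(1{+}k_\eps)^{\delta}}\dd x\dd t\Big)^{p/2}\Big(\int_Q(1{+}k_\eps)^{p\delta/(2-p)}\dd x\dd t\Big)^{1-p/2},
\]
so it suffices to control $k_\eps$ in $L^{p\delta/(2-p)}(Q)$; choosing $\delta$ small enough (for fixed $p$) makes this exponent $\le 4p/3$, which is exactly the remaining quantity. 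For the $L^{4p/3}(Q)$-bound on $k_\eps$ I invoke the anisotropic Gagliardo–Nirenberg inequality, Lemma~\ref{lem:GN}(B), with $N=3$, $\theta=3/4$, $(s_1,p_1)=(\infty,1)$ and $s_2=p_2=p$: this gives $s=p_{\mathrm{exp}}=4p/3$ and hence $\|k_\eps\|_{L^{4p/3}(Q)}\le C^* J_{3/4}\big(\|k_\eps\|_{L^\infty(L^1)},\|\nabla k_\eps\|_{L^p(L^p)}\big)$, closing the loop via the already-established $L^\infty(0,T;L^1)$-bound and the $L^p$-gradient bound. A standard absorption/bootstrap argument (first fix $p$ close to $1$, get a crude bound, then iterate $p\nearrow 2$) handles the mild circularity.

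The main obstacle I anticipate is precisely this coupling between the weighted gradient estimate, the desired unweighted $L^p$-gradient bound, and the $L^{4p/3}(Q)$-bound on $k_\eps$: none of the three follows in isolation, and one must choose the exponents $(p,\delta)$ and order the Hölder/Gagliardo–Nirenberg steps so the interpolation exponent on $(1{+}k_\eps)$ stays below $4p/3$ while $\theta=3/4$ keeps the Gagliardo–Nirenberg scaling consistent with $d=3$. A secondary technical point is justifying the chain rule in time for $\int_\Omega\Phi(k_\eps)\dd x$ when $k_\eps$ only has $k'_\eps\in L^{r'}(0,T;(W^{1,r}_{\mathrm{per}})^*)$; this is handled by the same regularization/approximation argument as in Lemma~\ref{l4.2} (the reference to \cite[pp.\,290--291]{Ref18} applies verbatim since $\Phi$ is $C^1$ with bounded derivative), so it is routine rather than genuinely hard. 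Everything else — the sign of the dissipative term, the boundedness of the source term by the already-controlled energy flux, the vanishing of the convective term — is elementary once the test function is chosen.
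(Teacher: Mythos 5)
Your proposal is essentially the paper's own proof: the same test function $z=\Phi'(k_\eps)=1-(1{+}k_\eps)^{-\delta}$ in \eqref{4.17}, the same term-by-term treatment (convective term vanishes by $\diver\bs u_\eps=0$, dissipation source bounded by \eqref{4.22} since $0\le z<1$, the $k_\eps\omega_\eps^+z$-term dropped by sign), and the same closing via H\"older plus Lemma~\ref{lem:GN}(B) with $\theta=3/4$. Two points of execution deserve attention. First, the step ``$k_\eps/(\eps{+}\omega_\eps)\ge\beta_*$ together with $k_\eps\le 1{+}k_\eps$ yields control of $\int_Q|\nabla k_\eps|^2(1{+}k_\eps)^{-\delta}$'' is not correct as written: the diffusion term only produces the weight $(1{+}k_\eps)^{-1-\delta}$, and $\beta_*$ alone upgrades nothing. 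What is needed is a positive lower bound on $\frac{k_\eps}{(\eps+\omega_\eps)(1+k_\eps)}$, which follows from the pointwise bounds of Lemma~\ref{l4.2}, namely $k_\eps\ge\kappa(t)\ge\kappa(T)>0$ (so $1{+}k_\eps\le(1{+}1/\kappa(T))\,k_\eps$) together with $\eps+\omega_\eps\le 1+\omega^*$; this is exactly how the paper argues, and the same lower bound $k_\eps\ge\kappa(T)$ is also what lets you replace $\eps\int k_\eps^{r-1}z$ by $\eps\int k_\eps^{r-1}$ on the left. Second, the ``mild circularity'' you set up (choosing $\delta$ small so that $p\delta/(2{-}p)\le 4p/3$ and then absorbing) does work — your absorption exponent is $3\delta/8<1$ and $\|\nabla k_\eps\|_{L^p(Q)}$ is finite for each fixed $\eps$ — but the paper avoids it altogether: it derives the $\delta$-weighted bound for every $\delta\in{]0,1[}$ first, and then, for the $L^p$-bound on $\nabla k_\eps$, uses the auxiliary choice $\delta'=(2{-}p)/p$, for which the H\"older partner exponent on $(1{+}k_\eps)$ equals $1$, so only the already established $L^\infty(0,T;L^1(\Omega))$-bound is needed and the $L^{4p/3}$-estimate then follows afterwards by Gagliardo--Nirenberg with no bootstrap.
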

%
%
\begin{proof} \underline{Step 1:} 
 For $0<\delta<1$ we define $\Phi:{[0,\infty[}\to {[0,\infty[}$
 via 
 \[
  \Phi(\tau)=\tau+\frac1{1-\delta}\big(1-(1{+}\tau)^{1-\delta}\big),\quad
  0\le\tau<\infty. 
 \]
Hence, $\Phi$ is convex and satisfies, for all $\tau \geq 0$, the estimates 
\begin{equation}
 \label{4.26}
\frac\tau 2-\frac{2}{1{-}\delta}\le\Phi(\tau)\le\tau,\quad 
 \Phi'(\tau)=1-\frac1{(1+\tau)^\delta} \in [0,1],\quad
 \Phi''(\tau)=\frac\delta{(1+\tau)^{1+\delta}}.
\end{equation}
From \cite[pp.\,360--361; cf.\
also pp.\,365--366]{Ref23} (with $W_{\mathrm{per}}^{1,p}(\Omega)$ in
place of $W_0^{1,p}(\Omega)$) we have the chain rule 
\[
 \int_0^t\big\langle k'_\eps(s),\Phi'\big(k_\eps(s)\big)\big\rangle_{W_{\mathrm{per}}^{1,r}}\dd s=\int_\Omega\Phi\big(k_\eps(x,t)\big)\dd x-\int_\Omega\Phi\big(k_{0,\eps}(x)\big)\dd x
\]
for all $t\in[\,0,T\,]$. Using $\diver\bs u_\eps =0$ we obtain 
\[
 \int_\Omega\Phi'\big(k_\eps(\cdot,t)\big)\bs{u}_\eps(t)\cdot\nabla
 k_\eps(t)\dd x =\int_\Omega \bs u_\eps(t)\cdot
 \nabla\big(\Phi(k_\eps(\cdot,t)\big) \dd x =0\quad\text{for
   a.a.\ } t\in[0,T].
\]
Inserting $z=\Phi'\big(k_\eps(\cdot,t)\big)$ into
\eqref{4.17} and using the last relation we find (recall
$\nu_0=1=\alpha_2$) 
\begin{align*}
 & \int_\Omega\Phi\big(k_\eps(x,t)\big)\dd x+ \delta\int_0^t
 \!\int_\Omega \frac{k_\eps}{\eps{+}\omega_\eps}\, 
 \frac{|\nabla k_\eps|^2}{(1{+}k_\eps)^{1+\delta}}\dd x\dd s\\
 & \quad +\,\eps\int_0^t\!\!\int_\Omega\Big(\delta\,
   \frac{|\nabla k_\eps|^r}{(1{+}k_\eps)^{1+\delta}}+ 
 k_\eps^{r-1}\big(1-\frac1{(1{+}k_\eps)^\delta}\big)\Big)\dd x\dd s\\
 & =\int_\Omega\Phi\big(k_{0,\eps}(x)\big)\dd x+ 
 \eps\int_0^t\!\!\int_\Omega\big(\kappa(s)\big)^{r-1} 
   \Big(1-\frac1{(1{+}k_\eps)^\delta}\Big)\dd x\dd s \\
 & \quad +\int_0^t\!\int_\Omega \Big(\frac{k_\eps} 
  {\eps{+}\omega_\eps {+} \eps k_\eps}\,
   \big|\bs{D}(\bs{u}_\eps)\big|^2 - k_\eps\omega_\eps\Big)
  \Big(1-\frac1{(1{+}k_\eps)^\delta}\Big)\dd x\dd s
\end{align*}
for all $t\in[\,0,T\,]$. By \eqref{4.22}, \eqref{4.26}, and
$k_\eps/\big((\eps{+} \omega_\eps)(1{+}k_\eps)\big) \geq
1/(1{+}\overline\omega(T))>0$ we find 
\begin{align}
\nonumber
 & \|k_\eps\|_{L^\infty(0,T;L^1(\Omega))} + \delta\int_Q\frac{|\nabla
   k_\eps|^2}   {(1{+}k_\eps)^\delta}\dd x\dd t + \eps\delta\int_Q 
 \frac{|\nabla k_\eps|^r}{(1{+}k_\eps)^{1+\delta}}\dd x\dd s + \eps \int_Q
 k_\eps^{r-1} \dd x \dd t \\
 \label{4.27}
 &\qquad \le
 c\Big(\frac1{1{-}\delta}+\|\bs{u}_{0,\eps}\|_{\bs{L}^2}^2+
 \|k_{0,\eps}\|_{L^1}+\|\bs{f}\|_{\bs{L}^2}^2+k_*^{r-1}\Big),  
\end{align}
where the constant $c$ is independent of $\delta$ and
$\eps$. Thus, we have estimated all the terms on the
left-hand side of \eqref{4.24} except for the second and third. 

\underline{Step 2:} To estimate $\nabla k_\eps$ we choose $p \in
{]1, 2[}$  and $\delta=(2{-}p)/p \in {]0,1[}$. 
With H\"older's inequality we find 
\begin{align*}
& \int_Q|\nabla k_\eps|^p\dd x\dd t= \int_Q\frac{|\nabla
   k_\eps|^p}{(1{+}k_\eps)^{p\delta/2}}\: (1{+}k_\eps)^{p\delta/2} \dd x\dd t 
\\
&\leq \Big( \int_Q\frac{|\nabla k_\eps|^2} 
 {(1{+}k_\eps)^\delta} \dd x \dd t \Big)^{p/2} 
   \Big(\int_Q(1{+}k_\eps)^{\delta p/(2-p)}\dd x \dd t\Big)^{(2-p)/2}
 \\
 & \leq \frac1{\delta^{p/2}}\,\Big( \delta \int_Q\frac{|\nabla k_\eps|^2} 
 {(1{+}k_\eps)^\delta} \dd x\dd t\Big)^{p/2} \:T\Big( |\Omega|{+}
 \|k_\eps\|_{L^\infty(0,T;L^1(\Omega))}\Big). 
\end{align*}
Using \eqref{4.27} this provides the estimate for the third term
on the left-hand side of \eqref{4.24}.

\underline{Step 3:} To show higher integrability of $k_\eps$ we
simply use the Gagliardo--Nirenberg interpolation from Lemma
\ref{lem:GN}  for $z\in W^{1,p}(\Omega)$ with $\Omega \subset \mathbb
R^3$ where $p\in {[1,2[}$ as in Step 2:
\[
\|z\|_{L^{4p/3}(\Omega)} \leq C_{\text{GN}} \|z\|_{L^1(\Omega)}^{1/4}
\big(\|z\|_{L^1(\Omega)}+ \|z\|_{L^p(\Omega)} \big)^{3/4},
\]
Applying this to $z=k_\eps(t)$, taking the power $4p/3$, and
integrating $t\in [0,T]$ we obtain
\begin{align*}
\int_Q |k_\eps|^{4p/3}\dd x \dd t & =  \int_0^T
\|k_\eps(t)\|_{L^{4p/3}(\Omega)}^{4p/3} \dd t 
\leq 
C_{\text{GN}}^{4p/3} \int_0^T K_\eps^{p/3} \big(
K_\eps + \|\nabla k_\eps(t)\|_{L^p(\Omega)}
\big)^p \dd t,
\end{align*}
where $K_\eps:= \| k_\eps(\cdot)\|_{L^\infty(L^1(\Omega))} \leq
C<\infty$ by Step 1. Hence, together with Step 2 the second
term on the left-hand side of \eqref{4.24}is uniformly bounded by the
right-hand side of \eqref{4.24}. 

In summary, the desired a priori estimate \eqref{4.24} is established.   
\end{proof}

\subsection{Estimates for $\{\bs{u}'_\eps, \omega'_\eps,  k'_\eps\}$} 
\label{s5.4} 

We now provide a priori estimates on the time derivative. To
obtain estimates  that
are independent of $\eps\in {]0,1]}$ we recall $r\geq 3$ and will use
$\sigma>r$ and estimate in the dual space of  $W^{1,\sigma}(\Omega)$. While
for $\bs u'_\eps$ and $\omega'_\eps$ we obtain estimates in spaces
$L^{q}\big(0,T;((W^{1,\sigma}(\Omega))^*\big)$ with $q>1$, the time derivative
$k'_\eps$ can be estimated only for $q=1$, because of the source
term $\frac{k_\eps}{\eps{+}\omega_\eps + \eps k_\eps} |\bs D(\bs
u_\eps)|^2$, for which the only $\eps$-independent a priori estimate is in
$L^1(Q)=L^1(0,T;L^1(\Omega))$. This problem will result in the
occurrence of the defect measure $\mu$. The estimates for $\bs
u'_\eps$ and $\omega'_\eps$ will work for arbitrary $r\geq 3$,
however, for the estimate of $k'_\eps$ we need to restrict $r$ to the
small interval ${[3,11/3[}$. Here the upper bound $r<11/3$ seems to be
critical for $N=3$, while $2<r<3$ might still be considered.


\begin{proposition}\label{pr:4.5} 
Let $\mathfrak D$ be fixed. \\
(A) For all $r\geq 3$ (implying $r'=r/(r{-}1)\leq 3/2$) and $\sigma>
r$ there exists a constant $C_1$ such that for all $0<\eps\le 1$ the solutions
$\{ \bs u_\eps, \omega_\eps,k_\eps\}$ of Proposition \ref{p4.1} satisfy
the estimates 
\begin{equation}\label{4.28}
\|\bs{u}'_\eps\|_{L^{r'}(0,T;(\bs{W}_{\mathrm{per},\diver}^{1,\sigma}(\Omega))^*)}
+ \|\omega'_\eps\|_{L^{r'}(0,T;(W_{\mathrm{per}}^{1,\sigma}(\Omega))^*)}
\leq C_1. 
\end{equation}
(B) For all $r\in {[3,11/3[}$ and $\sigma > 8r/(11{-}3r)$ there exists
a constant $C_2$  such that for all $0<\eps\le 1$ the solutions
$\{ \bs u_\eps, \omega_\eps,k_\eps\}$ of Proposition \ref{p4.1} satisfy
\begin{equation}\label{4.29}
\|k'_\eps\|_{L^1(0,T;(W_{\mathrm{per}}^{1,\sigma})^*)}\leq C_2. 
\end{equation}
\end{proposition}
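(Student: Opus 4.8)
The plan is to estimate the time derivatives directly from the weak formulations \eqref{4.15}, \eqref{4.16}, \eqref{4.17} by isolating $\langle \bs u'_\eps(t),\bs w\rangle$, $\langle\omega'_\eps(t),\psi\rangle$ and $\langle k'_\eps(t),z\rangle$ and bounding every remaining term against the norm of the test function in $W^{1,\sigma}$. For part (A), I would test \eqref{4.15} with an arbitrary $\bs w\in\bs W^{1,\sigma}_{\mathrm{per},\diver}(\Omega)$ and control: the convective term $(\bs u_\eps\cdot\nabla\bs u_\eps)\cdot\bs w$ by writing it (after integration by parts, using $\diver\bs u_\eps=0$) as $-(\bs u_\eps\otimes\bs u_\eps):\nabla\bs w$, so that it is bounded by $\|\bs u_\eps\|_{\bs L^4}^2\|\nabla\bs w\|_{\bs L^2}\le C\|\bs u_\eps\|_{\bs L^4}^2\|\bs w\|_{W^{1,\sigma}}$; the viscous term $\tfrac{k_\eps}{\eps+\omega_\eps}\bs D(\bs u_\eps):\bs D(\bs w)$ by the Cauchy--Schwarz bound $\big(\int\tfrac{k_\eps}{\eps+\omega_\eps}|\bs D(\bs u_\eps)|^2\big)^{1/2}\big(\int\tfrac{k_\eps}{\eps+\omega_\eps}|\bs D(\bs w)|^2\big)^{1/2}$, the second factor being controlled by $\overline\omega(0)\|\bs w\|_{W^{1,2}}^2$ since $k_\eps/(\eps+\omega_\eps)\le \overline\omega(T)$ is false in that direction --- instead one uses $k_\eps/(\eps+\omega_\eps)=k_\eps^{1/2}\cdot k_\eps^{1/2}/(\eps+\omega_\eps)$ and H\"older, bounding $k_\eps^{1/2}\bs D(\bs u_\eps)\in L^2(Q)$ from \eqref{4.22} together with $k_\eps^{1/2}\in L^q(Q)$ for $q<16/3$ (Gagliardo--Nirenberg applied to $k_\eps$ via \eqref{4.24}) to get $\tfrac{k_\eps}{\eps+\omega_\eps}\bs D(\bs u_\eps)\in L^{\overline p}(Q)$ for some $\overline p\in{[1,16/11[}$, paired against $\bs D(\bs w)\in L^\sigma$ with $\sigma$ large; and the $r$-Laplacian terms $\eps|\bs D(\bs u_\eps)|^{r-2}\bs D(\bs u_\eps):\bs D(\bs w)+\eps|\bs u_\eps|^{r-2}\bs u_\eps\cdot\bs w$ by $\eps^{1/r}$ times a bounded quantity (using \eqref{4.22}) times $\|\bs w\|_{W^{1,r}}\le C\|\bs w\|_{W^{1,\sigma}}$; and $\bs f\cdot\bs w$ trivially. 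Collecting exponents in time, each term lies in $L^{q_j}(0,T)$ for some $q_j\ge r'=r/(r-1)$ (here $\bs u\in L^{8/3}(0,T;\bs L^4)$ and $r'\le 3/2<8/3\cdot\tfrac12$ makes the convective term the borderline one, which still gives $L^{4/3}\subset L^{r'}$), so $\bs u'_\eps\in L^{r'}(0,T;(\bs W^{1,\sigma}_{\mathrm{per},\diver})^*)$ uniformly in $\eps$. The estimate for $\omega'_\eps$ from \eqref{4.16} is analogous: the transport term $\psi\bs u_\eps\cdot\nabla\omega_\eps$ is bounded after integration by parts by $\|\omega_\eps\bs u_\eps\|_{L^{\overline p}}\|\nabla\psi\|_{L^\sigma}$ using $\omega_\eps\in L^\infty(L^2)\cap L^2(W^{1,2})\hookrightarrow L^{10/3}(Q)$ and $\bs u_\eps\in L^{10/3}(Q)$; the diffusion term $\tfrac{k_\eps}{\eps+\omega_\eps}\nabla\omega_\eps\cdot\nabla\psi$ is handled exactly as the viscous term above using $k_\eps^{1/2}\nabla\omega_\eps\in L^2(Q)$ from \eqref{4.23}; the reaction term $\omega_\eps^+\omega_\eps\psi$ is in $L^\infty(L^{?})$ by the pointwise bound $\omega_\eps\le\overline\omega(0)$; and the $\eps$-terms as before.

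For part (B) the same procedure is applied to \eqref{4.17}, but now the exponents are tighter and the source term $\tfrac{k_\eps}{\eps+\omega_\eps+\eps k_\eps}|\bs D(\bs u_\eps)|^2$ is the decisive obstacle. I would bound it pointwise by $\tfrac{k_\eps}{\eps+\omega_\eps}|\bs D(\bs u_\eps)|^2$ (dropping the harmless $\eps k_\eps$ in the denominator), which from \eqref{4.22} is bounded in $L^1(Q)$ uniformly in $\eps$, and this is the \emph{only} $\eps$-uniform bound available --- hence the time-integrability can be no better than $q=1$, forcing the statement to be in $L^1(0,T;(W^{1,\sigma})^*)$ and eventually producing the defect measure $\mu$ in the limit. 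Against a test function $z\in W^{1,\sigma}(\Omega)$, the $L^1(\Omega)$-bound pairs with $\|z\|_{L^\infty(\Omega)}\le C\|z\|_{W^{1,\sigma}(\Omega)}$, which requires the Sobolev embedding $W^{1,\sigma}(\Omega)\hookrightarrow C(\overline\Omega)$, i.e.\ $\sigma>N=3$; combined with $\sigma>r\ge 3$ this is automatic, but the genuine constraint $\sigma>8r/(11-3r)$ comes from the \emph{other} terms. Specifically: the transport term $z\bs u_\eps\cdot\nabla k_\eps$, integrated by parts to $-k_\eps\bs u_\eps\cdot\nabla z$, needs $k_\eps\bs u_\eps\in L^1(0,T;L^{\sigma'}(\Omega))$; using $k_\eps\in L^{4p/3}(Q)$ for $p<2$ (so up to $L^{8/3}(Q)$) and $\bs u_\eps\in L^{10/3}(Q)$ gives $k_\eps\bs u_\eps\in L^{q}(Q)$ for $q<8/7$, with a spatial exponent that H\"older-couples to $\sigma'$ only if $\sigma$ is large enough --- tracking the exponents through Gagliardo--Nirenberg (using $p$ arbitrarily close to $2$, which is where the interval $r<11/3$ enters, since the improved $k_\eps$-estimate \eqref{4.24} only gives $\nabla k_\eps\in L^p$ for $p<2$) yields precisely the threshold $8r/(11-3r)$; the diffusion term $\tfrac{k_\eps}{\eps+\omega_\eps}\nabla k_\eps\cdot\nabla z$ is controlled by $\|\tfrac{k_\eps}{\eps+\omega_\eps}\nabla k_\eps\|_{L^1(0,T;L^{\sigma'})}$, and here one writes $\tfrac{k_\eps}{\eps+\omega_\eps}\nabla k_\eps=\tfrac{1}{\eps+\omega_\eps}\cdot k_\eps^{1/2}\cdot(1+k_\eps)^{\delta/2}\cdot\tfrac{k_\eps^{1/2}\nabla k_\eps}{(1+k_\eps)^{\delta/2}}$ — no, more simply one uses that $\tfrac{k_\eps}{\eps+\omega_\eps}\le \tfrac{k_\eps}{\omega_*/(1+T\omega_*)}=C k_\eps$, so the term is $\le C k_\eps\nabla k_\eps\in L^q(Q)$ for $q<8/7$ by the same bookkeeping as in Remark~\ref{r3.1}; and the reaction term $\alpha_2 k_\eps\omega_\eps^+ z\le C k_\eps z$ is in $L^\infty(L^1)$ and pairs with $\|z\|_{L^\infty}$; the $\eps$-Laplacian terms $\eps|\nabla k_\eps|^{r-2}\nabla k_\eps\cdot\nabla z+\eps|k_\eps|^{r-2}k_\eps z$ are bounded by $\eps^{1/r}\cdot C\cdot\|z\|_{W^{1,r}}$ since $\eps\int_Q k_\eps^{r-1}$ and (after a small argument, or by accepting a slightly weaker but sufficient bound) $\eps\int_Q|\nabla k_\eps|^r$ are controlled --- actually only $\eps\int_Q |\nabla k_\eps|^r/(1+k_\eps)^{1+\delta}$ is uniformly bounded by \eqref{4.24}, so here one inserts a factor $(1+k_\eps)^{(1+\delta)/r}$ and uses H\"older with the $k_\eps\in L^{4p/3}$ bound, which again consumes a bit of $\sigma$; and the right-hand side $\eps(\kappa(t))^{r-1}\int_\Omega z$ is trivially bounded.

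The main obstacle is the careful exponent bookkeeping in part (B): one must simultaneously (i) exploit the improved estimate \eqref{4.24} on $k_\eps$ — which only gives $\nabla k_\eps\in L^p(Q)$ and $k_\eps\in L^{4p/3}(Q)$ for $p<2$, never $p=2$ — and (ii) show that for the transport and diffusion products $k_\eps\bs u_\eps$ and $k_\eps\nabla k_\eps$ the resulting spatial Lebesgue exponent, paired against $W^{1,\sigma}(\Omega)$ via H\"older and Sobolev embedding, closes exactly when $\sigma>8r/(11-3r)$; the endpoint arithmetic is delicate because $8r/(11-3r)\to\infty$ as $r\uparrow 11/3$, which is the source of the restriction $r<11/3$. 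I expect to organize the proof as: Step 1, rewrite each weak equation so that $\langle(\cdot)'_\eps(t),\xi\rangle$ is expressed as a sum of integrals over $\Omega$ against $\xi$ or $\nabla\xi$; Step 2, for (A), bound each such integral in $L^{r'}(0,T)$ using Lemmas~\ref{l4.3} and the Gagliardo--Nirenberg Lemma~\ref{lem:GN}, taking sup over $\|\xi\|_{W^{1,\sigma}}\le 1$; Step 3, for (B), repeat but settle for $L^1(0,T)$, identifying the $L^1(Q)$-bound on the dissipation source as the bottleneck and computing the constraint on $\sigma$ from the remaining terms; Step 4, combine with Proposition~\ref{pr:4.4} and conclude. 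Everything else is routine once the interpolation exponents are pinned down.
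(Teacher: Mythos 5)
Your overall architecture coincides with the paper's (dualize \eqref{4.15}--\eqref{4.17} term by term against a fixed test function in $W^{1,\sigma}$, then invoke Lemma~\ref{l4.3}, Proposition~\ref{pr:4.4} and the Gagliardo--Nirenberg Lemma~\ref{lem:GN}), but in part (A) your exponent bookkeeping does not deliver the claimed $L^{r'}$ bound, and the slip is not cosmetic. You bound the convective term by $\|\bs u_\eps\|_{\bs L^4}^2\|\nabla\bs w\|_{\bs L^2}$, which from $\bs u_\eps\in L^{8/3}(0,T;\bs L^4)$ gives a majorant only in $L^{4/3}(0,T)$; your claim ``$r'\le 3/2<8/3\cdot\tfrac12$'' is false ($3/2>4/3$), and $L^{4/3}(0,T)\not\subset L^{r'}(0,T)$ for every $r<4$ --- in particular for the whole range $r\in[3,11/3[$ that is actually used later. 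The same defect occurs for the viscous and $\omega$-diffusion terms: the isotropic bound $\tfrac{k_\eps}{\eps+\omega_\eps}\bs D(\bs u_\eps)\in L^{\overline p}(Q)$ with $\overline p<16/11$ only yields time-integrability below $16/11<3/2=r'|_{r=3}$. The missing idea is precisely why one estimates in $(W^{1,\sigma})^*$ with $\sigma>r$ large: since $\nabla\bs w\in \bs L^\sigma(\Omega)$, only a \emph{weak} spatial norm ($L^{\sigma'}$, close to $L^1$) of the nonlinear products is needed, and the anisotropic Lemma~\ref{lem:GN}(B) then trades spatial for temporal integrability, e.g.\ $\bs u_\eps\otimes\bs u_\eps$ bounded in $L^{2\sigma/3}\big(0,T;L^{\sigma'}(\Omega)\big)$ and $k_\eps\bs D(\bs u_\eps)$ in $L^{\overline q_2}\big(0,T;L^{\sigma'}(\Omega)\big)$ with $\overline q_2$ up to $10\sigma/(5\sigma+6)$, which does exceed $r'$. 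Without this anisotropic step your argument proves a weaker statement (some $L^q$ with $q<r'$), not \eqref{4.28}.

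In part (B) your treatment of the individual terms is essentially the paper's, including the correct devices (integration by parts using $\diver\bs u_\eps=0$, the embedding $W^{1,\sigma}\hookrightarrow C(\overline\Omega)$ for the $L^1(Q)$-dissipation, and reinserting the weight $(1{+}k_\eps)^{(1+\delta)/r}$ for the term $\eps|\nabla k_\eps|^{r-2}\nabla k_\eps$). However, you misattribute the hypotheses: the transport and diffusion terms $k_\eps\bs u_\eps$ and $k_\eps\nabla k_\eps$ only require $\sigma$ larger than fixed numbers (roughly $40/13$ and $8$), independent of $r$, and impose no restriction $r<11/3$. The conditions $r\in[3,11/3[$ and $\sigma>8r/(11{-}3r)$ arise solely from the $\eps$-regularization term: after inserting the weight one needs, in the paper's notation, $p=r'/\sigma'>11/8$ so that a $\rho$ with $p\rho\in{]1,2]}$ and $p'\rho<8/3$ exists, which is exactly $\sigma>8r/(11{-}3r)$ and forces $r<11/3$. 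You gesture at this computation (``consumes a bit of $\sigma$'') but do not carry it out, so the decisive arithmetic that justifies the stated hypotheses is missing; once the part-(A) bookkeeping is repaired anisotropically and this Hölder computation is done explicitly, your proof matches the paper's.
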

\begin{proof} \underline{Step 1. Estimate for $\bs u'_\eps$:} For
  $\bs{w}\in\bs{W}_{\mathrm{per},\diver}^{1,\sigma}(\Omega)$, we write
  \eqref{4.15} in the form
\begin{align}
&\big\langle \bs{u}'_\eps(t) , \bs{w} 
   \big\rangle_{\bs{W}_{\mathrm{per},\diver}^{1,\sigma}}  
 =\big\langle
 \bs{u}'_\eps(t),\bs{w}\big\rangle_{\bs{W}_{\mathrm{per},\diver}^{1,r}}
 \nonumber\\
\label{4.30}
& =\int_\Omega\!\big(\bs{u}_\eps(t) {\otimes}
  \bs{u}_\eps(t)\big) {:} \nabla\bs{w}\dd
  x-\nu_0\int_\Omega\!\frac{k_\eps(t)}{\eps\!+\!\omega_\eps(t)}\, 
 \bs{D}\big(\bs{u}_\eps(t)\big) {:} \bs{D}(\bs{w})\dd x
\\ 
& \quad-
\eps\int_\Omega\! \Big(\big|\bs{D}\big(\bs{u}_\eps(t)\big)\big|^{r-2} 
 \bs{D}\big(\bs{u}_\eps(t)\big) {:} \bs{D}(\bs{w})+ 
  \big|\bs{u}_\eps(t)\big|^{r-2}\bs{u}_\eps(t) {\cdot} \bs{w}\Big)\dd x
+\int_\Omega\bs{f}(t)\cdot\bs{w}\dd x \nonumber\\
& = \sum\limits_{m=1}^4 I_{\eps,m}(t) \quad \text{ for a.a.\ } t\in[0,T].
 \nonumber
\end{align}
The aim is to show $|I_{\eps,m}(t)|\leq f_{\eps,m}(t)\|\bs
w\|_{\bs W^{1,\sigma}(\Omega)} $ with $f_{\eps,m}$ bounded in
$L^{\overline q_m}(0,T)$ for some $\overline q_m \geq r/(r{-}1)$. For
this, we proceed as in Remark \ref{r3.1}, but use now that $\bs w\in
\bs W^{1,\sigma}_\text{per,div}(\Omega)$ is fixed.

For $I_{\eps,1}$ we use $\nabla \bs
w\in \bs L^\sigma(\Omega)$ and need to bound $|\bs u_\eps {\otimes} \bs u_\eps|
\leq |\bs u_\eps|^2$ in $L^{\sigma'}(\Omega)$, which means $\bs u_\eps\in \bs
L^p(\Omega)$ with $p=2\sigma/(\sigma{-}1)$. For this we use the bounds
\eqref{4.22} for $\bs u_\eps$, which allow us to apply Lemma
\ref{lem:GN}(B)  with $(s_1,p_1)=(\infty,2)$, $(s_2,p_2)=(2,2)$, $N=3$,
and $\theta=3/(2\sigma)<1/2$. 
This provides the desired $p=2\sigma/(\sigma{-}1)$ and $\overline q_1 
= s=4\sigma/3$. 

To estimate $I_{\eps,2}$ we use $\eps+\omega_\eps(x,t)\geq \underline
\omega(T)>0$ and need to bound 
\[
|k_\eps \bs D(\bs u_\eps)| = k_\eps^{1/2}\; |k_\eps^{1/2}\bs D(\bs
u_\eps)| \quad \text{ in } L^{\overline q_2}(0,T;L^{\sigma'}(\Omega)).
\]
By \eqref{4.22} we have a uniform bound for $|k_\eps^{1/2}\bs D(\bs
u_\eps)|$ in $L^2(Q)=L^2(0,T;L^2(\Omega))$. Moreover, \eqref{4.24}
provides uniform bounds for $\|k_\eps\|_{ L^\infty(0,T;L^1(\Omega) ) }$
and for $\|\nabla k_\eps\|_{\bs L^p(Q)}$ with $p\in {[1,2[}$. Hence,
restricting to $\overline q_2\in [1,2]$ we proceed as follows:
\begin{align*}
&\|k_\eps \bs D(\bs u_\eps)\|_{L^{\overline
    q_2}(0,T;L^{\sigma'}(\Omega))}^{\overline q_2} \leq \int_0^T \big(
\| k^{1/2}_\eps\|_{L^{2\sigma/(\sigma-2)}} \|k_\eps^{1/2}\bs D(\bs
u_\eps)\|_{L^2}\big)^{\overline q_2} \dd t 
\\ &
\leq \int_0^T \!\!\| k_\eps\|_{L^{\sigma/(\sigma-2)}}^{\overline q_2/2} 
 \|k_\eps^{1/2}\bs D(\bs u_\eps)\|_{L^2}^{\overline q_2} \dd t  
\overset{\text{H\"older}}\leq \Big( \int_0^T \!\! \|
k_\eps\|_{L^{\sigma/(\sigma-2)}}^{ \overline q_2/(2-\overline q_2)}
\dd t \Big)^{(2-\overline q_2)/2} \Big( \int_Q \!k_\eps|\bs D(\bs
u_\eps)|^2\dd t   \Big)^{\overline q_2/2}  .
\end{align*}
The second term in the last product is already uniformly bounded. To
estimate the first term we apply Lemma \ref{lem:GN}(B)
with $(s_1,p_1)=(\infty,1)$, $s_2=p_2 \in {[1,2[}$, $N=3$, and
$\theta=6 p_2/((4p_2{-}3)\sigma) \in {]0,1[}$, where we use
$\sigma>r\geq 3$ such that $p_2$ can be chosen close to $2$. From the
interpolation condition \eqref{eq:Interp.s} we obtain the range of
possible $\overline q_2$ via
\[
\frac2{\overline q_2}-1= \frac{2{-}\overline q_2}{\overline q_2} =
\frac1s = (1{-}\theta)\frac1{s_1} + \theta \frac1{s_2}= 0 +
\theta\frac1{p_2} = \frac{6}{(4p_2{-}3)\sigma}. 
\] 
Thus, we are able to choose all $\overline q_2 \in
{[1,10\sigma/(5\sigma{+}6)[}$ by adjusting $p_2$ suitably. As
$\sigma>r\geq 3$ we see that $\overline q_2=3/2$ is always
admissible.

Using $\sigma\geq r\geq 3$ and H\"older's inequality, we obtain
\[
\big|I_{\eps,3}(t)\big|\le f_{\eps,3}(t)
\|\bs{w}\|_{\bs{W}^{1,\sigma}}\quad\text{ with }
f_{\eps,3}(t)=C\eps\big\|\bs{u}_\eps(t)\big\|_{\bs{W}^{1,r}}^{r-1}. 
\]
By the uniform bound \eqref{4.22} we obtain
$\|f_{\eps,3}\|_{L^{r'}(0,T)} \leq C_* \eps^{1/(r{-}1)}$ with a constant
$C_*$ independent of $\eps$. Thus, we can choose $\overline
q_3= r'=r/(r{-}1)\leq 3/2$.

With $|I_{\eps,4}(t)| \leq \|\bs f(t)\|_{L^2} \|\bs w(t)\|_{\bs
  L^2}\leq C \|\bs f(t)\|_{L^2}\| \bs w\|_{\bs W^{1,\sigma}}$ and $\bs
  f \in \bs L^2(Q)=L^2(0,T;\bs L^2(\Omega))$ we obtain $\overline
  q_4=2$, and conclude that in all cases we have $\overline
  q_m\geq r'=r/(r{-}1)$ and the first part of \eqref{4.28} is
  established. 
\medskip

\underline{Step 2. Estimate for $\omega'_\eps$:} We proceed as in Step
1 by writing \eqref{4.16} in the form 
\[
\big\langle \omega'_\eps(t),\psi \big\rangle_{W^{1,\sigma}} =
\sum_{m=1}^5 J_{\eps,m}(t) \quad \text{ with } |J_{\eps,m}(t)| \leq
g_{\eps,m}(t)\| \psi\|_{W^{1,\sigma}},
\]
where $g_{\eps,m}$ has to be bounded in $L^{\widetilde q_m}(0,T)$ for
suitable $\widetilde q_m\geq r'=r/(r{-}1)$. Exploiting Lemma
\ref{l4.2}, namely $0< \underline \omega(T)\leq \omega_\eps(x,t)\leq
\overline\omega(0)=\omega^*$ and \eqref{4.23} and proceeding as in
Step 1 we easily find $\widetilde q_1=\widetilde q_3 = \widetilde
q_5=\infty$, $\widetilde q_2 = 10\sigma/(5\sigma{+}6) \geq 3/2$, and
$\widetilde q_4=r'\leq 3/2$. Thus, the second part of \eqref{4.28},
and hence all of \eqref{4.28}, is established.  \medskip

\underline{Step 3. Estimate for $k'_\eps$:} We again write 
\begin{align}
 \big\langle k'_\eps(t),z\big\rangle
\label{4.31}
& =-\int_\Omega z\bs{u}_\eps(t)\cdot\nabla k_\eps(t)\dd x-\int_\Omega\frac{k_\eps(t)}{\eps+\omega_\eps(t)}\,\nabla k_\eps(t)\cdot\nabla z\dd x\\
& \quad +\nu_0\!\int_\Omega\frac{k_\eps(t)}{\eps+\omega_\eps(t)+\eps k_\eps(t)}\,\big|\bs{D}\big(\bs{u}_\eps(t)\big)\big|^2z\dd x-\alpha_2\int_\Omega k_\eps(t)\omega_\eps(t)z\dd x\nonumber\\
& \quad- \eps\!\int_\Omega\big(\big|\nabla k_\eps(t)\big|^{r-2}\nabla
k_\eps(t)\cdot\nabla z+\big|k_\eps(t)\big|^{r-2}k_\eps(t)z\big)\dd x
 +\,\eps\big(\kappa(t)\big)^{r-1}\int_\Omega z\dd x\nonumber\\
& =:\sum\limits_{m=1}^7 K_{\eps,m}(t)\nonumber
\end{align}
and have to show that $K_{\eps,m}(t)\leq
h_{\eps,m}(t)\|z\|_{W^{1,\sigma}}$, where $h_{\eps,m}$ is bounded in
$L^1(0,T)$ independently of $\eps\in {]0,1[}$ and $m=1,\ldots,7$. 

Before starting the estimates we note that the condition $r\in
{[3,11/3[}$ and $\sigma> 8r /(11{-}3r)$ implies $\sigma >12$, which
will be useful below.  

For $m=1$ we integrate by parts using $\diver \bs u_\eps=0$  and obtain
\[
|K_{\eps,1}(t)| = \Big|\int_\Omega k_\eps \bs u_\eps {\cdot} \nabla
z\dd x \Big| \leq h_{\eps,1}(t) \|z\|_{W^{1,\sigma}} \quad \text{with
} h_{\eps,1}(t)=\|k_\eps \bs u_\eps\|_{L^{\sigma'}}.
\]
Using \eqref{4.22} for $\bs u_\eps$ and applying Lemma \ref{lem:GN}
with $(s_1,p_1)=(\infty,2)$, $(s_2,p_2)=(2,2)$, $N=3$, and
$\theta=3/5$ we find $(s,p)=(10/3,10/3)$ which means that $\bs u_\eps$
is uniformly bounded in $L^{10/3}(Q)$. Using the uniform bound
\eqref{4.24} for $k_\eps$ in $L^q(Q)$ for all $q \in {[1,8/3[}$ 
we can choose $q$ such that $\frac1q+\frac3{10}\leq 1/{\sigma'}<1$ 
as $\sigma>40/13$ and obtain 
\[ 
\int_0^T h_{\eps,1}(t) \dd t \leq \int_0^T C\| k_\eps(t)\|_{L^{q}(\Omega)} 
\| \bs u_\eps(t) \|_{L^{10/3}(\Omega)} \dd t \leq C_T \|
k_\eps\|_{L^q(Q)} \| \bs u_\eps\|_{L^{10/3}(Q)} \leq C_{T,1}. 
\]

For $m=2$ we again use \eqref{4.24} and $\sigma>8$. Choosing $p\in
{[1,2[}$ with $3/(4p) + 1/p +1/\sigma \leq 1$ H\"older's inequality
gives 
\[
\int_0^T|K_{\eps,2}(t)|\dd t \leq \int_0^T \|k_\eps\|_{L^{4p/3}} 
 \|\nabla k_\eps\|_{L^p} \|\nabla z\|_{L^\sigma}\dd t \leq C_{T,2}\|
k_\eps\|_{L^{4p/3}(Q)} \|\nabla k_\eps\|_{L^p(Q)} \|
z\|_{W^{1,\sigma}}. 
\] 

The case $m=3$ follows easily as $\|z\|_{L^\infty(\Omega)} \leq C \|
z\|_{W^{1,\sigma} }$ because $\sigma>N$. Together with the simple
energy estimate \eqref{4.22} (uniform boundedness of the dissipation)
we obtain 
\[
\int_0^T |K_{\eps,3}(t)|\dd t \leq C \int_Q
\frac{k_\eps}{\eps{+}\omega_\eps} |\bs D(\bs u_\eps)|^2 \dd x \dd t
\|z\|_{L^\infty}  \leq C_{3}\| z\|_{W^{1,\sigma}}.
\]
The case $m=4$ is also trivial, since $|K_{\eps,4}(t)| \leq C\|
k_\eps(t)\| \omega^* \|z\|_{L^\infty}$. 

The most difficult term is $K_{\eps,5}$ because we do not have an a
priori bound on $\eps|\nabla k_\eps|^r$. We adapt the method developed
in Step 2 of the proof of Proposition \ref{pr:4.4}. Using 
\[
|K_{\eps,5}(t)| \leq h_{\eps,5}(t)\|z\|_{W^{1,\sigma}} \quad 
\text{ with } h_{\eps,5}(t) = \eps \big\| |\nabla k_\eps(t)|^{r-1} 
\big\|_{L^{\sigma'}}
\]
we proceed as follows:
\begin{align*}
\int_0^T h_{\eps,5}\dd t & = \eps \int_0^T \|\nabla
k_\eps(t)\|_{L^(r-1)\sigma'}^{r-1} \dd t \leq \eps  T^{1/\sigma}
\|\nabla k_\eps\|_{L^{(r-1)\sigma'}(Q)}^{r-1}\\
& \leq \eps T^{1/\sigma} \Big( \int_Q \frac{|\nabla
  k_\eps|^{(r-1)\sigma'}}{(1{+}k_\eps)^\rho}  (1{+}k_\eps)^\rho \dd x
\dd t  \Big)^{1/\sigma'}\\
\intertext{for a $\rho>0$ to be chosen appropriately. Applying
  H\"older's inequality with $p= r'/\sigma'>1$ and using $\eps =
  \eps^{1/r} \eps^{1/(p\sigma')}$ we continue}
&\leq \eps^{1/r} T^{1/\sigma} \Big( \int_Q \frac{\eps |\nabla k_\eps|^r} 
  {(1{+} k_\eps)^{p\rho}} \dd x \dd t \Big)^{1/(p\sigma')} 
  \Big( \int_Q (1{+} k_\eps)^{p'\rho} \dd x \dd t \Big)^{1/(p'\sigma')}.
\end{align*}
According to \eqref{4.24} both integral terms are uniformly bounded if
we can choose $\rho$ such that $p\rho \in {]1,2]}$ and $p'\rho < 8/3$.
Writing $\kappa=1/p$ this means $\kappa < \rho < \min\{ 2\kappa,
8(1{-}\kappa)/3 \}$, which has solutions $\rho$ if and only if $\kappa
\in {]0,8/11[}$, i.e.\ we need $p=r'/\sigma'>11/8$ which in term can
only be possible if $r'>11/8$ or $r<11/3$. Then, $p=r'/\sigma'>11/8$
is equivalent to $\sigma> 8r/(11{-}3r)$. This
explains the restriction for $r$ and $\sigma$ in \eqref{4.29} and
provides the $L^1$ bound $ \int_0^T |K_{\eps,5}(t)| \dd t \leq
\eps^{1/r} C_{r,\sigma} \|z\|_{W^{1,\sigma}} $. 

The estimate of $K_{\eps,6}$ follows easily from \eqref{4.24} using
$r{-}1\in {[2,8/3[}$, which implies $\|k_\eps\|_{L^{r-1}(Q)}
\leq C$ and thus 
\[
\int_0^T |K_{\eps,6}(t)|\dd t \leq \int_0^T \eps
\|k_\eps\|_{L^{r-1}}^{r-1} \dd t\: \| z\|_{L^\infty} \leq  \eps C
\|z\|_{W^{1,\sigma}}. 
\]

The case of $K_{\eps,7}$ is trivial. 

For later use in the limit passage $\eps\to 0$ we note that   
\begin{equation}
  \label{eq:K567}
   \int_0^T \!\! \big( |K_{\eps,5}(t)|+|K_{\eps,6}(t)|+|K_{\eps,7}(t)|\big)
   \dd t \ \leq \ \eps^{1/r} C_{r,\sigma} \|z\|_{W^{1,\sigma}}.
\end{equation}
Hence, the a priori estimate \eqref{4.29}
for $k'_\eps$ is established. 
\end{proof}

\subsection{Convergent subsequences}
\label{s5.5}

After having derived a series of a priori estimates we are now
able to choose weakly converging subsequences for $\eps\to 0$. Of
course the major step is to identify the limits of the nonlinear
terms. For simplicity we now choose one fixed $r_*\in {[3,11/3[}$ and
a $\sigma_*>12$, which implies that Part (A) and (B) of Proposition
\ref{pr:4.5} can be applied. From \eqref{4.18},
\eqref{eq:AppSolEE}, \eqref{4.24}, \eqref{4.28}, and \eqref{4.29}
we obtain a limit triple $ \{\bs u, \omega, k\}$ with the
properties 
\begin{equation}
  \label{eq:LimitTriple}
\left.\begin{aligned}
&\underline \omega \leq \omega \leq \overline \omega \text{ a.e.\ on }Q,  
\\
&\bs u \in L^2(0,T;\bs
  W^{1,2}(\Omega))\cap L^\infty(0,T;\bs L^2(\Omega)) \cap
  W^{1,r'_*}\big(0,T;(\bs W^{1,\sigma_*}_\text{per,div}(\Omega))^*\big), 
\\
& \omega \in L^\infty(Q)\cap L^2(0,T;W^{1,2}(\Omega)) \cap
  W^{1,r'_*}\big(0,T;(W^{1,\sigma_*}_\text{per}(\Omega))^*\big),
\\ 
& k\in  L^\infty(0,T;L^1(\Omega)) \cap L^{4p/3}(Q) \cap
  L^p(0,T;W^{1,p}_\text{per}(\Omega)) \cap
  \text{BV}(0,T;(W^{1,\sigma_*}_\text{per}(\Omega))^*\big)
\end{aligned}
\right\} 
\end{equation}
for all $p\in {[1,2[}$, such that along a suitable subsequence (not
relabeled) we have 
\begin{subequations}
\label{eq:LinCvg}
\begin{align}
& \label{eq:LinCvg.a}
   \bs{u}_\eps \weak \bs{u}\ \text{ in } L^2\big(0,T; 
  \bs{W}_{\mathrm{per},\diver}^{1,2}(\Omega)\big) \  \text{ and 
 weakly$^*$ in } L^\infty\big(0,T;\bs{L}^2(\Omega)\big),
\\
& \label{eq:LinCvg.b}
 \bs{u}'_\eps \weak \bs{u}'\ \text{ in } 
 L^{r'_*}\big(0,T;(W_{\mathrm{per},\diver}^{1,\sigma_*}(\Omega))^*\big),  
\\[2mm] 
& \label{eq:LinCvg.c}
 \omega_\eps \weak \omega \ \text{ in }  L^2\big(0,T; 
  W_{\mathrm{per}}^{1,2}(\Omega)\big) \ \text{ and weakly$^*$ in } 
 L^\infty(Q),
 \\ 
& \label{eq:LinCvg.d}
\omega'_\eps \weak \omega' \ \text{  in } L^{r'_*}\big(0,T; 
                  (W_{\mathrm{per}}^{1,\sigma_*}(\Omega))^*\big),
\\[2mm] 
& \label{eq:LinCvg.e}
        k_\eps \weak  k\ \text{  in } L^p\big(0,T;W^{1,p}_\text{per}(\Omega)\big)
        \text{  and in }
         L^{4p/3}(Q) \ \text{ for all } p \in {[1,2[}. 
\end{align}
\end{subequations}
These weak convergences imply the corresponding properties of the
limits $ \bs u$ and $ \omega$ in \eqref{eq:LimitTriple}. 
Moreover, $\|k\|_{L^\infty(0,T;L^1(\Omega))}\leq C< \infty$
follows from \eqref{4.24} and \eqref{eq:LinCvg.e} by a routine
argument. As in \cite[Sec.\,1.3.2]{BarPre12} the space $BV(0,T;X)$,
where $X$ is a Banach space, denotes all functions $g:[0,T]\to X$ such
that $\text{Var}_X(g,[a,b]):=\sup \sum_{i=1}^N \|g(t_i){-}g(t_{i-1})\|_X
< \infty$ where the supremum is taken over all finite partitions $a\leq
t_0<t_1<\cdots <t_N\leq b$. Clearly, \eqref{4.29} implies 
$\text{Var}_{(W^{1,\sigma}_\text{per})^*}(k_\eps,[0,T])=\|k'_\eps\|_{L^1(0,T;
  (W^{1,\sigma}_\text{per})^*)}\leq C_2$. Since for all
partitions we have 
\[
\sum_{i=1}^N\|k(t_i){-}k(t_{i-1})\|_{(W^{1,\sigma}_\text{per})^*} \leq
\liminf_{\eps\to 0}
\sum_{i=1}^N\|k_\eps(t_i){-}k_\eps(t_{i-1})\|_{(W^{1,\sigma}_\text{per})^*}
\leq C_2,
\]
which provides $\|k\|_{\text{BV}(0,T;(W^{1,\sigma_*}_\text{per}(\Omega))^*)}
\leq C_2 < \infty$  as stated at the end of \eqref{eq:LimitTriple}.    

We next apply the Aubin-Lions-Simon lemma (see 
\cite[Cor.\,4, p.\,85]{Ref24}, 
\cite[Th.\,5.1, p.\,58]{Ref18}, or \cite[Lem.\,7.7]{Roub13NPDE}) to
obtain strong convergence. By taking a further subsequence (not
relabeled) Vitali's theorem implies the pointwise convergence almost
everywhere. 
\begin{subequations} 
\label{eq:AubinLions}
\begin{align}
\label{eq:AubinLions.u}
&\bs{u}_\eps\to \bs{u}\ \text{  in } \bs{L}^s(Q) \ \text{  for all $s
  \in {[1,10/3[} $ \ and  a.e.\ in } Q,
\\
\label{eq:AubinLions.o}
&\omega_\eps\to \omega\,\ \text{  in } {L}^p(Q) \text{ for all $p>1$
  \ and  a.e.\ in }Q,
\\
\label{eq:AubinLions.k}
&k_\eps\, \to \, k \ \text{  in } {L}^q(Q) \text{ for all } q \in {[1,8/3[}
  \ \text{ and  a.e.\ in }Q,
\end{align}
\end{subequations}
To obtain the results in \eqref{eq:AubinLions.o} and
\eqref{eq:AubinLions.k} we first derive strong convergence
for $s=p=q=2$ and then use the boundedness of the sequence for higher
$s$, $p$, and $q$ to obtain strong convergence for intermediate values
by Riesz interpolation (use \eqref{eq:bs.u.10/3} for $\bs u_\eps$).

We are now ready to consider also the limits of the nonlinear
terms. We first treat the diffusive terms.

\begin{lemma}\label{lem:DiffNonlin} Along the chosen subsequences for
  $\eps \to 0$  we
  have the convergences
\begin{subequations} 
\label{eq:DiffCvg}
\begin{align}
\label{eq:DiffCvg.a}
 \frac{k_\eps}{\eps{+}\omega_\eps}\bs{D}(\bs{u}_\eps)
\weak  \frac k\omega \bs D(\bs u)   \text{ and } 
 \frac{k_\eps}{\eps{+}\omega_\eps}\nabla \omega_\eps
\weak  \frac k\omega \nabla \omega \ &\text{in }\bs L^s(Q) \text{ for
  all } s \in {[1,16/11[}, 
\\
\label{eq:DiffCvg.b}
 \frac{k_\eps}{\eps{+}\omega_\eps}\nabla k_\eps
\weak  \frac k\omega \nabla k \hspace{6em} &\text{in } \bs L^\sigma(Q)\text{ for
  all } \sigma \in {[1,8/7[}.
\end{align} 
\end{subequations}
\end{lemma}
\begin{proof} We first recall the weak convergences of the gradients
  $\bs D(\bs u_\eps)$, $\nabla \omega_\eps$, and $\nabla k_\eps$ in
$L^p(Q)$ for all $p\in {[1,2[}$, see \eqref{eq:LinCvg}. 
Next we establish the strong convergence
\begin{equation}
  \label{eq:sqrt.Cvg}
  \Big(\frac{k_\eps}{\eps{+}\omega_\eps}\Big)^{1/2} \ \to \ \Big(\frac k\omega
  \Big)^{1/2} \quad \text{ in }L^q(Q) \text{ for all } q\in {[1,16/3[}.
\end{equation}
To see this we use the explicit estimate  
\begin{align*}
\Big\|\big(\frac{k_\eps}{\eps{+}\omega_\eps}\big)^{1/2}- 
   \big(\frac k\omega\big)^{1/2}\Big\|_{L^q(Q)} \!
 &\leq 
  \Big\|\big(\frac{k_\eps}{\eps{+}\omega_\eps}\big)^{1/2} \! -
     \big(\frac{k}{\eps{+}\omega_\eps}\big)^{1/2}\Big\|_{L^q(Q)} 
  \! +\Big\|\big( \frac{k}{\eps{+}\omega_\eps} \big)^{1/2} 
        -  \big(\frac k\omega\big)^{1/2}\Big\|_{L^q(Q)} \\
&\leq \frac{\| k_\eps {-} k\|_{L^{q/2}(Q)}^{1/2} }
           {(1{+}\underline\omega(T))^{1/2}} 
   +
 \frac{\big\|(\eps{+}\omega_\eps -\omega)\,k^{1/2}
 \big\|_{L^q(Q)}} {2(1{+}\underline\omega(T))^{3/2}}  .
\end{align*}
Clearly, the first term on the right-hand side tends to $0$ using
\eqref{eq:AubinLions.k} and $q/2<8/3$. For the second term we can
still choose $\widetilde q\in {]q,16/3[}$ and $\widetilde p \gg 1$
such that $1/q=1/\widetilde q + 1/\widetilde p$. Then, H\"older's
inequality, $k^{1/2}\in L^{\widetilde q}(Q)$, and
\eqref{eq:AubinLions.o} for $p=\widetilde p$ yield the convergence to
$0$. Hence, the convergence \eqref{eq:sqrt.Cvg} is established.

Now using the weak convergences $\bs D(\bs u_\eps) \weak \bs D(\bs
u)$ and  $\nabla \omega_\eps \weak \nabla \omega$, and $\nabla k_\eps
\weak \nabla k$ in $L^p(Q)$ for $p\in {[1,2[}$ and \eqref{eq:sqrt.Cvg}
we obtain the weak convergences 
\[
 \big(\tfrac{k_\eps}{\eps{+}\omega_\eps}\big)^{1/2}\bs{D}(\bs{u}_\eps)
\weak  \big(\tfrac k\omega\big)^{1/2} \bs D(\bs u), \quad 
 \big(\tfrac{k_\eps}{\eps{+}\omega_\eps}\big)^{1/2}\nabla \omega_\eps
\weak  \big(\tfrac k\omega\big)^{1/2} \nabla \omega,\quad
 \big(\tfrac{k_\eps}{\eps{+}\omega_\eps}\big)^{1/2}\nabla k_\eps
\weak  \big(\tfrac k\omega\big)^{1/2} \nabla k 
\] 
in $L^q(Q)$ for all $q \in {[1,16/11[}$. 

However, by the standard a priori estimates \eqref{eq:AppSolEE} we see
that the first two sequences are bounded in $L^2(Q)$ and hence 
converge weakly in $L^2(Q)$ as well. The convergence of the third term cannot be
improved, because we don't have appropriate a priori bounds. 

Multiplying once again by
$\big(k_\eps/(\eps{+}\omega_\eps)\big)^{1/2}$, which converges
strongly according to \eqref{eq:sqrt.Cvg}, we obtain the results in
\eqref{eq:DiffCvg}. 
\end{proof}

\subsection{Limit passage $\eps\to 0$ and appearance of
  the defect measure}\label{s5.6}

In this subsection we finalize the proof of Theorem
\ref{th:MainExist}.

Using the convergences derived above it is now straight forward
to perform  the limit passage $\eps\to 0$ in the equation for $\bs u_\eps$
and $\omega_\eps$. In the energy equation for $k_\eps$ we have to be a
little more careful to show the occurrence of the defect measure
$\mu$. 

In the Steps 1 to 3 the limit $\eps\to 0$ will be done with
test functions with high integrability $\rr$ in $t\in 
[0,T]$ taking values in the Sobolev $W^{1,\sss}(\Omega)$ with large
$\sss$. This choice will be independent of the chosen $r_*$
in the regularization terms. After the artificial $r_*$ has
disappeared in the limit, in Step 4 we discuss which minimal $\rr$ and
$\sss$ can be chosen in the weak form. \medskip 

\noindent
\underline{Step 1. Limit in the momentum balance for $\bs u_\eps$, from
  \eqref{4.11} to \eqref{3.7}:}  
We consider a fixed test function $\bs v \in
L^\rr\big(0,T;\bs W^{1,\sss}_\text{per,div}(\Omega))^*\big)$ with
$\rr=4$ and $\sss\geq s_*>12$ and discuss the convergence of the five terms
on the left-hand side of \eqref{4.11} individually. 

The first term is linear in $\bs u'_\eps$ and converges because of
\eqref{eq:LinCvg.b}.  The second term can be rewritten as 
$\int_\Omega \big(\bs u_\eps {\otimes} \bs u_\eps) : \nabla \bs v \dd x
\dd t $ and converges by \eqref{eq:AubinLions.u}. 

For the third term we use the nonlinear convergences from Lemma
\ref{lem:DiffNonlin}, cf.\ the first in \eqref{eq:DiffCvg.a}. 
The fourth and fifth terms converge to $0$ by the estimate 
$\int_0^T|I_{\eps,3}(t)|\dd t \leq C_*\eps^{1/(r_*{-}1)} \|\bs D(\bs
v)\|_{L^{r_*}(\bs L^{\sigma_*})} {\leq} C\eps^{1/(r_*{-}1)} \| \bs
v\|_{L^\rr(\bs W^{1,\sss})}$, see Step\,1 of the proof of Proposition\,\ref{pr:4.5}. 

Thus, \eqref{3.7} is established for test functions $v\in
L^\rr\big(0,T;\bs W^{1,\sss}_\text{per,div}(\Omega))^*\big)$. 
\medskip

\noindent
\underline{Step 2. Limit for $\omega_\eps$, from \eqref{4.12} to \eqref{3.8}:}  
This case works similar as Step 1. \medskip

\noindent
\underline{Step 3. Limit in the energy equation for $k_\eps$, from
  \eqref{4.13} to \eqref{2.6}:} 
For this limit passage we choose a test function $z\in
C_{\mathrm{per},T}^1(\overline{Q})$,\vspace{0.1em} because we want to take the limit
of the dissipation which is bounded only in $L^1(Q)$. 

The first term of the left-hand side in\eqref{4.13} is integrated by
parts in time to obtain
\begin{align*}
 \int_0^T \!\!\big\langle
 k'_\eps(t),z(t)\big\rangle_{W_{\mathrm{per} }^{1,r}}\dd t =
 \int_\Omega k_{0,\eps} z(\cdot,0)\dd x - \int_Q k_\eps z'\dd x \dd t \
 \to \  \int_\Omega k_0 z(\cdot,0)\dd x - \int_Q k z'\dd x \dd t 
\end{align*}
by \eqref{4.8} and and \eqref{eq:LinCvg.e}. 
For the second term we use \eqref{eq:AubinLions} and conclude
\begin{align*}
 \int_Q z \bs u_\eps {\cdot} \nabla k_\eps \dd x \dd t = - \int_Q
 k_\eps \nabla \bs u_\eps{\cdot} \nabla z\dd x \dd t 
\ \to \ - \int_Q k\bs u{\cdot} \nabla z\dd x \dd t  .
\end{align*}

For the third term Lemma \ref{lem:DiffNonlin} can be exploited (cf.\
\eqref{eq:DiffCvg.a}) to find
\[
\int_Q\frac{k_\eps}{\eps+\omega_\eps}\,\nabla k_\eps\cdot\nabla z\dd
x\dd t \ \to \ \int_Q\frac k\omega\,\nabla k\cdot\nabla z\dd x\dd t. 
\]

We return to the fourth term at the end and continue with the fifth
term. Using \eqref{eq:AubinLions} and $\omega_\eps^+ = \omega_\eps
\geq \underline \omega(\cdot) >0$ we easily find $\int_Q k_\eps
\omega_\eps^+ z \dd x \dd t \to \int_Q k\omega z \dd x \dd t $.

The sixth and seventh term on the left-hand side and the single term
on the right-hand side converge to $0$, which was establish in
Step 3 of the proof of Proposition \ref{pr:4.5}, see \eqref{eq:K567}.

For the fourth term, it 
remains to prove the \emph{appearance of the non-negative defect measure}
$\mu\in \mathcal{M}_\geq(\overline{Q})$ such that
\begin{align}
 \label{4.41}
\int_Q\frac{\nu_0 k_\eps}{\eps{+}\omega_\eps{+}\eps
  k_\eps}\,\big|\bs{D}(\bs{u}_\eps)\big|^2\phi\dd x\dd t \
\longrightarrow \ \int_Q\frac{\nu_0 k}\omega \, 
 \big|\bs{D}(\bs{u})\big|^2\phi\dd x \dd t  +
   \int_{\overline{Q}}\phi\dd \mu \ 
\text{ for all } \phi\in C(\overline{Q}) .       
\end{align}
Indeed, by the positivity of the integrand and the a priori
estimate \eqref{4.22} we can apply Riesz' Representation Theorem
for linear continuous functionals on $C(\overline{Q})$. Hence, there
exist  $\widehat{\mu} \in \mathcal{M}_\geq (\overline{Q}) 
$  such that
\[
\int_Q\frac{\nu_0 k_\eps}{\eps{+}\omega_\eps{+}\eps k_\eps} 
\,\big|\bs{D}(\bs{u}_\eps)\big|^2\phi\dd x\dd t \ \to \ 
 \int_{\overline{Q}}\phi\dd\widehat{\mu} \ 
\text{  for all } \phi\in C(\overline{Q}).
\]

As in Lemma \ref{lem:DiffNonlin} we can show that
$\big(\frac{k_\eps}{\eps{+}\omega_\eps{+}\eps k_\eps} \big)^{1/2} \bs
D(\bs u_\eps)$ converges weakly to $(k/\omega)^{1/2}\bs D(\bs u)$ in
$\bs L^2(Q)$. Of course, this weak convergence remains true if we
multiply by a continuous function $\psi\in C(\overline Q)$. 
Thus, the lower semi-continuity of the $L^2$ norm yields 
\begin{align*}
 \int_Q\psi^2\dd\widehat{\mu}= \lim\limits_{\eps\to0}
 \int_Q\frac{\nu_0 k_\eps}{\eps+\omega_\eps+\eps  k_\eps}\,
  \big|\bs{D}(\bs{u}_\eps)\big|^2\psi^2\dd x\dd t 
 \geq  \int_Q\frac{\nu_0k}\omega\,\big|\bs{D}(\bs{u})\big|^2\psi^2\dd x\dd t
\end{align*}
for all $\psi\in C(\overline{Q})$. Thus, the linear
functional
$\phi  \mapsto  \int_Q \phi \dd \widehat\mu  - \int_Q\frac{\nu_0k}
\omega\,\big|\bs{D}(\bs{u})\big|^2\phi\dd x\dd t$
is non-negative and defines the desired defect measure $\mu \in
\mathcal M_\geq (\overline Q)$, and 
\[
 \int_{\overline{Q}}\phi \dd\widehat{\mu}=\int_Q\frac
{\nu_0k}\omega\,\big|\bs{D}(\bs{u})\big|^2\phi\dd x\dd t + 
 \int_{\overline{Q}}\phi \dd{\mu} 
\quad\text{for all }  \phi\in C(\overline{Q}),
\]
 which gives the desired convergence \eqref{4.41}.
\medskip

\noindent \underline{Step 4. More test functions:} 
After having passed to the limit $\eps\to 0$ the regularization terms
involving the exponent $r$ have disappeared. From the a priori
estimates \eqref{eq:LimitTriple} for $\{\bs u,\omega,k\}$ we know that 
$\bs u{\otimes} \bs u\in L^{5/3}(Q)$ and $\frac k\omega\bs D(\bs u)\in
L^q(Q)$ for all $q\in {[1,16/11[}$. Thus, by density we 
can extend the set of test function $\bs v$ in
\eqref{3.6} can be chosen in
$L^\rr\big(0,T;W^{1,\sss}_\text{per,div}(\Omega)\big)$ for any
$\rr>16/5$ and $\sss>16/5$. This proves \eqref{3.7} and \eqref{3.8} for
the full set of test functions. 

Moreover, we find $\bs u' \in
L^q\big((W^{1,q'}_\text{per,div}(\Omega))^*\big)$ for all $q\in
{[1,16/11[}$, which proves \eqref{3.6}. 
\medskip

\noindent \underline{Step 5. Several further statements:} 
To derive \eqref{3.5} we define the functional 
$ 
\mathcal J:( k,\bs u,\omega) \mapsto \int_Q k\,(|\bs D(\bs u)|^2{+}|\nabla
\omega|^2)\dd x \dd t  
$ 
and use the a priori estimate $\mathcal J(k_\eps,\bs
u_\eps,\omega_\eps) \leq C$, which follows from \eqref{eq:AppSolEE}
since $\omega_\eps\geq \underline\omega(T)>0$. The functional is
convex in $\bs u$ and $\omega$, hence it is lower semicontinuous with
respect to strong convergence in $k$ (see \eqref{eq:AubinLions.k}) and
weak convergence for $(\bs u,\omega)$ (see \eqref{eq:LinCvg.a} and
\eqref{eq:LinCvg.c}), so that 
\[
\mathcal J(k,\bs u,\omega) \leq \liminf_{\eps \to 0} \mathcal
J(k_\eps,\bs
u_\eps,\omega_\eps) \leq C,
\]
which is the desired estimate \eqref{3.5}. The limit passage $\eps\to 0$ in
the pointwise a priori estimates \eqref{4.18} leads immediately to the
pointwise estimates \eqref{3.2} for $\omega$ and $k$.

By \eqref{eq:LinCvg.b} and \eqref{eq:LinCvg.d} the functions $\bs
u_\eps(\cdot) $ and $\omega_\eps$ are uniformly bounded with respect
to $\eps \in {]0,1]}$ in
$ W^{1,r_*}\big(0,T;(W^{1,\sigma_*}(\Omega))^*\big) \subset C^{1/r_*}\big([0,T]; (W^{1,\sigma_*}(\Omega))^*\big)$. Thus, we have
uniform convergence and obtain $(\bs u,\omega) 
\in C^{1/r_*}\big([0,T]; (\bs
W^{1,\sigma_*}(\Omega))^*{\times}(W^{1,\sigma_*}(\Omega))^*
\big)$. Together with the essential boundedness of $(\bs u,\omega)$ in
$L^2(\Omega){\times} L^2(\Omega)$ this implies 
\[
(\bs u,\omega)\in C_{\mathrm{w}}  ([0,T];\bs L^2(\Omega){\times} L^2(\Omega)).
\]
Hence \eqref{3.3} is established. 
Moreover, with \eqref{4.8} and the uniform convergence  we deduce
the initial conditions \eqref{3.10}, i.e.\ $\bs u(\cdot,0)=\bs u_0$ and
$\omega(\cdot,0)= \omega_0$.

\noindent\underline{Step 6. Energy estimates:} 
To obtain the energy-dissipation inequality \eqref{eq:NS.Ener} for the
Navier-Stokes equation, we insert $\bs{w}=\bs{u}_\eps(t)$ into \eqref{4.15},
integrate over the interval $[0,t]$, drop the non-negative term
$\int_0^t\!\int_\Omega \eps |\bs D(\bs u_\eps)|^r \dd x \dd t$, and take the
limit $\eps\to 0$.

Finally, we insert $z\equiv 1$ into \eqref{4.17}, integrate over $[0,t]$ and
add this identity to the one just obtained for $\bs{u}_\eps$. Using
$ \frac{k_\eps}{\eps+\omega_\eps} - \frac{k_\eps}{\eps+\omega_\eps+\eps k_\eps}
\geq 0$ we can drop the two dissipation terms involving
$|\bs D(\bs u_\eps)|^2$. Moreover, the regularization term
$\int_\Omega \eps |\nabla k_\eps|^{r-2} \nabla k_\eps \cdot \nabla z \dd x $
with $z\equiv 1$ gives $0$.  Hence, taking the limit $\eps\to 0$ yields
inequality \eqref{eq:EstimTotEner} for the total energy.

With this, the proof of our main existence result in Theorem
\ref{th:MainExist} is complete.

\appendix

\section{Appendix.\ Existence of approximate solutions}
\label{s.A}

We now provide the proof of Proposition \ref{p4.1}, which will be
obtained as an application of a general existence result of evolutionary
equations of pseudo-monotone type. 

We consider a separable reflexive Banach space $\bfV$ that is
continuously and densely embedded in a Hilbert space $\bfH$ such that
$\bfV \subset \bfH \approx \bfH^* \subset \bfV^*$. For $U\in \bfV$ and
$\Xi \in \bfV^*$ we denote the dual pairing by $\langle
\Xi, U\rangle$.  Our operator
$A:\bfV\to \bfV^*$ is assumed to satisfy the following conditions
depending on $p>1$: 
\begin{subequations}
  \label{eq:A.Assum}
\begin{align}
&  \label{eq:A.Assum.a}
\text{$p$-boundedness:}&& \exists\, C_1>0: \ \|A(U)\|_{\bfV^*}\leq
C_1\big(1{+}\|U\|_\bfV^{p-1}\big) \ \text{ for all }U\in \bfV;
\\ 
& \label{eq:A.Assum.b}
\text{$p$-coercivity:} && \exists \,C_2>0: \ \langle A(U),U\rangle \geq
\frac1{C_2} \|U\|_\bfV^{p}-C_2  \quad \;  \text{ for all } U\in \bfV;
\\[0.4em]
&  \label{eq:A.Assum.c}
\text{pseudo-monotonicity:}\hspace*{-0.6em} &&\left\{
\begin{aligned} &\text{if }U_m\weak U \text{ in }\bfV
\text{ and } \limsup_{m\to\infty} \langle A(U_m),U_m{-}U\rangle \leq
0, \text{ then}\\
& \  \langle A(U),U{-}V\rangle \leq \liminf_{m\to\infty}
\langle A(U_m),U_m{-}V\rangle    \text{ for all }V\in \bfV.
\end{aligned}\right\}
\end{align}
\end{subequations}

Under these conditions the following existence result is available.

\begin{theorem}[see e.g.\ {\cite[Thm.\,8.9]{Roub13NPDE}}]
\label{thm:A} Let $\bfV$ and $\bfH$
  be as above and let the operator $A:\bfV\to \bfV^*$ satisfy the
  assumptions \eqref{eq:A.Assum} with $p>1$. Then, for all $T>0$, all $u_0\in
  \bfH$, and all $f \in L^{p'}([0,T];\bfV^*)$ there exists a solution
  $u \in L^p(0,T;\bfV)\cap C([0,T];\bfH)\cap W^{1,p'}(0,T;\bfV^*)$ of
  the Cauchy problem 
\begin{equation}
\label{eq:AbstrCP}
u'(t) + A(u(t)) = f(t) \ \text{ in }\bfV^* \text{ for a.a.\ } 
t\in [0,T] \qquad \text{and} \qquad u(0)=u_0.
\end{equation}
\end{theorem}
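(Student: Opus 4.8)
The plan is to prove Theorem~\ref{thm:A} by the classical Galerkin method, with the a priori estimates coming from the $p$-coercivity \eqref{eq:A.Assum.b} and the $p$-boundedness \eqref{eq:A.Assum.a}, and the identification of the nonlinear limit coming from the pseudo-monotonicity \eqref{eq:A.Assum.c}. Since $\bfV$ is separable, I would fix a Galerkin basis $\{w_j\}_{j\in\mathbb N}\subset\bfV$ whose linear span is dense in $\bfV$ and which is orthonormal in $\bfH$, chosen moreover so that the $\bfH$-orthogonal projections $P_n$ onto $\mathrm{span}\{w_1,\dots,w_n\}$ are bounded on $\bfV$ uniformly in $n$ (hence, by duality, also as maps $\bfV^*\to\bfV^*$); such a basis exists, e.g.\ via eigenfunctions of a suitable elliptic operator, cf.\ \cite[Chap.\,8]{Roub13NPDE}. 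For each $n$ one seeks $u_n(t)=\sum_{j=1}^n c_{nj}(t)w_j$ solving the finite-dimensional Cauchy problem $\langle u_n'(t){+}A(u_n(t)),w_j\rangle=\langle f(t),w_j\rangle$ for $j=1,\dots,n$ together with $u_n(0)=P_nu_0$; Carath\'eodory's theorem gives a local absolutely continuous solution, which the estimate below extends to all of $[0,T]$.

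\textbf{A priori estimates.} Testing the Galerkin system with $u_n(t)$ and invoking \eqref{eq:A.Assum.b} yields
\[
\tfrac12\,\frac{\mathrm d}{\mathrm dt}\|u_n(t)\|_\bfH^2+\frac1{C_2}\|u_n(t)\|_\bfV^p\le C_2+\langle f(t),u_n(t)\rangle .
\]
Young's inequality absorbs $\langle f(t),u_n(t)\rangle$ into $\tfrac1{2C_2}\|u_n(t)\|_\bfV^p+c\,\|f(t)\|_{\bfV^*}^{p'}$; integrating in $t$, using $\|u_n(0)\|_\bfH\le\|u_0\|_\bfH$, and applying Gronwall's lemma gives bounds for $u_n$ in $L^\infty(0,T;\bfH)\cap L^p(0,T;\bfV)$ uniform in $n$. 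Then \eqref{eq:A.Assum.a} bounds $A(u_n)$ in $L^{p'}(0,T;\bfV^*)$, and since $u_n'=P_n\big(f-A(u_n)\big)$ with $P_n$ uniformly bounded on $\bfV^*$, also $u_n'$ is bounded in $L^{p'}(0,T;\bfV^*)$.

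\textbf{Limit passage.} Passing to a subsequence (not relabeled) one obtains $u_n\weak u$ in $L^p(0,T;\bfV)$, $u_n\weak^* u$ in $L^\infty(0,T;\bfH)$, $u_n'\weak u'$ in $L^{p'}(0,T;\bfV^*)$, and $A(u_n)\weak\xi$ in $L^{p'}(0,T;\bfV^*)$. Letting $n\to\infty$ in the Galerkin equations (first with test functions $w_j$, then by density with general elements of $\bfV$, then with time dependence) gives $u'+\xi=f$ in $L^{p'}(0,T;\bfV^*)$. Because $u\in L^p(0,T;\bfV)$ and $u'\in L^{p'}(0,T;\bfV^*)$, the standard Gelfand-triple embedding lemma yields a representative $u\in C([0,T];\bfH)$ together with the chain rule $\frac{\mathrm d}{\mathrm dt}\|u(t)\|_\bfH^2=2\langle u'(t),u(t)\rangle$; testing the limit equation and the Galerkin equations with $w_j\varphi(t)$ where $\varphi(0)=1$, $\varphi(T)=0$ and comparing the boundary terms recovers $u(0)=u_0$. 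To identify $\xi=A(u(\cdot))$, one tests the Galerkin system with $u_n$ and integrates over $[0,T]$, obtaining $\int_0^T\langle A(u_n),u_n\rangle\dd t=\int_0^T\langle f,u_n\rangle\dd t-\tfrac12\|u_n(T)\|_\bfH^2+\tfrac12\|u_n(0)\|_\bfH^2$; taking $\limsup$, using $u_n(0)=P_nu_0\to u_0$ in $\bfH$, a standard argument giving $u_n(T)\weak u(T)$ in $\bfH$ together with weak lower semicontinuity of $\|\cdot\|_\bfH$, and the energy equality for the limit $u$, one finds $\limsup_n\int_0^T\langle A(u_n),u_n-u\rangle\dd t\le\int_0^T\langle\xi,u\rangle\dd t-\int_0^T\langle\xi,u\rangle\dd t=0$. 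Assumption \eqref{eq:A.Assum.c}, which (for bounded coercive $A$) lifts to pseudo-monotonicity of the associated Nemytskii operator on $L^p(0,T;\bfV)$ (see \cite{Roub13NPDE}), then forces $A(u_n)\weak A(u(\cdot))$ and $\int_0^T\langle A(u_n),u_n\rangle\dd t\to\int_0^T\langle A(u(\cdot)),u\rangle\dd t$, hence $\xi=A(u(\cdot))$, completing the proof.

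\textbf{Main obstacle.} The delicate step is the identification $\xi=A(u)$. It rests on two points that must be set up carefully: the chain rule $\frac{\mathrm d}{\mathrm dt}\|u\|_\bfH^2=2\langle u',u\rangle$, valid precisely because of the Gelfand triple $\bfV\subset\bfH\subset\bfV^*$ and requiring the Lions--Magenes continuity lemma; and the inequality $\liminf_n\|u_n(T)\|_\bfH^2\ge\|u(T)\|_\bfH^2$, which needs $u_n(T)\weak u(T)$ in $\bfH$ and is obtained from the uniform bounds $u_n\in L^\infty(0,T;\bfH)\cap W^{1,p'}(0,T;\bfV^*)$ by an Arzel\`a--Ascoli argument in the weak topology of $\bfH$. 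The remaining steps — solvability of the Galerkin ODEs, the Gronwall estimate, and the passage to the limit in the linear terms — are routine.
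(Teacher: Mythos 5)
The paper does not prove Theorem~\ref{thm:A} at all: it is quoted from \cite[Thm.\,8.9]{Roub13NPDE} and used as a black box to obtain Proposition~\ref{p4.1} in Appendix~\ref{s.A}, so what you have written is a reconstruction of the standard textbook proof of the cited result rather than of any argument in the paper itself. As a sketch it is correct and follows the classical route (Galerkin approximation, a priori bounds from the coercivity \eqref{eq:A.Assum.b}, boundedness of $A(u_n)$ from \eqref{eq:A.Assum.a}, Lions--Magenes chain rule and the Minty-type $\limsup$ argument to identify the nonlinear limit), but two points deserve flagging. First, your assumption of an $\bfH$-orthonormal basis whose spans are dense in $\bfV$ and whose $\bfH$-orthogonal projections are uniformly bounded on $\bfV$ is not automatic for a general separable reflexive $\bfV$ in a Gelfand triple; it does hold in the paper's concrete setting (periodic Sobolev spaces, trigonometric basis), and in the abstract setting it can be dispensed with, since the uniform bound on $u_n'$ is only a convenience: $u_n(T)\weak u(T)$ in $\bfH$ and $u(0)=u_0$ can be obtained by testing the Galerkin and limit equations with $\varphi(t)w_j$ for scalar $\varphi$ and integrating by parts, using only the weak convergences of $u_n$ and $A(u_n)$ and recovering $u'=f-\xi$ from the limit equation. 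Second, the step where the pointwise pseudo-monotonicity \eqref{eq:A.Assum.c} is ``lifted'' to the induced operator on $L^p(0,T;\bfV)$ is not a formal consequence of \eqref{eq:A.Assum.c} alone; it requires the growth/coercivity and the $L^\infty(0,T;\bfH)$ bound via a Fatou-type argument a.e.\ in $t$, and it is precisely the technical core of the quoted theorem, so deferring it to \cite{Roub13NPDE} means the crux of your proof is still taken from the very source the statement is cited from. With these caveats understood, your outline is the standard and correct argument.
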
 

To apply this result we choose $p=r>3$, $U=(\bs u,\omega,k)$, 
\[
\bfH = \bs L^2_{\diver}(\Omega) \times L^2(\Omega) \times
L^2(\Omega), \quad \text{and} \quad \bfV = 
\bs W^{1,r}_\text{per,div}(\Omega) \times W^{1,r}_\text{per}(\Omega)
\times W^{1,r}_\text{per}(\Omega).
\]
The operator $A$ is defined to make the approximate
system  \eqref{eq:ApproxSys} equivalent to the abstract Cauchy problem
\eqref{eq:AbstrCP}. We recall that $\eps>0$ is fixed in Proposition
\ref{p4.1}, so we do not keep track of the dependence on $\eps$. With
$V=(\bs v,\varphi,w)$ we define $A:V\to V^* $ by 
\begin{align}
 \label{A1.4}
 & \langle A(U),V\rangle= I(U,V)\nonumber\\
 & :=\int_\Omega \bs u{\cdot} \nabla \bs{u}\cdot\bs{v}
 +\int_\Omega\frac{k^+}{\eps+\omega^+}\,\bs{D}(\bs{u}):\bs{D}(\bs{v})
\\
& \quad
 +\int_\Omega\varphi\bs{u}\cdot\nabla\omega 
 +\int_\Omega\frac{k^+}{\eps+\omega^+}\,\nabla\omega\cdot\nabla\varphi
 +\int_\Omega\omega^+\omega\varphi\nonumber
\\
& \quad
 +\int_\Omega w\bs{u}\cdot\nabla k
 +\int_\Omega\frac{k^+}{\eps{+}\omega^+}\,\nabla k\cdot\nabla w
 -\int_\Omega\frac{k^+}{\eps{+}\omega^+ {+} \eps k^+}\, 
     \big|\bs{D}(\bs{u})\big|^2 w
 +\int_\Omega k^+\omega^+w
\nonumber
\\
& \quad
+\eps\int_\Omega\Big(\big|\bs{D}(\bs{u})\big|^{r-2}\bs{D}(\bs{u}):\bs{D}(\bs{v})+|\bs{u}|^{r-2}\bs{u}\cdot\bs{v}\nonumber\\
&  \quad\qquad\qquad 
+|\nabla\omega|^{r-2}\nabla\omega\cdot\nabla\varphi+|\omega|^{r-2}\omega\varphi+|\nabla k|^{r-2}\nabla k\cdot\nabla w+|k|^{r-2}kw\Big).\nonumber
\end{align}
For the rest of this appendix we continue to omit the measure symbol
``$\dd x$'' for integration over $\Omega$. Moreover we have set
$\alpha_2=\nu_0=1$ for notational simplicity, because these numerical
constant have no influence on the analysis. 

\begin{proof}[Proof of Proposition  \ref{p4.1}] 
It remains to establish the conditions \eqref{eq:A.Assum} on the
operator $A$. 
\medskip

\underline{Step 1. $r$-boundedness \eqref{eq:A.Assum.a}:} Using $r>3$ and
H\"older's inequality, it is easily seen that all integrals in the
definition of $I(U,V)$ are well defined. In particular, we find a
constant $c_1>0$ such  that 
\begin{equation}
 \label{A1.5}
 \big|I(U,V)\big| \leq c_1\big( \|U\|_\bfV^2+\|U\|_\bfV^{r-1}\big) 
   \|V\|_\bfV \ \text{ for all }U,V\in \bfV. 
\end{equation}
But this implies \eqref{eq:A.Assum.a} because of $r\geq 3$.  
\medskip

\underline{Step 2. $r$-coercivity \eqref{eq:A.Assum.b}:} For estimating
$\langle A(U),U\rangle =I(U,U)$ from below we see that all convective
terms disappear because of $\diver \bs u=0$. After dropping the three
non-negative terms arising from the dissipation terms involving
$k^+/(\eps{+}\omega^+)$ we find
\begin{align}
\label{A1.12}
&  \langle A(U),U\rangle=I(U,U) \geq 
\eps\big\|(\bs{D}(\bs{u}),\bs{u},\nabla\omega,\omega, \nabla k,
k)\big\|_{L^r(\Omega)}^r  -\int_\Omega\frac{k^+}{\eps{+}\omega^+ {+} \eps k^+} 
\, \big|\bs{D}(\bs{u})\big|^2 k
\end{align}
for all $U \in \bfV$. 
We now use $k^+/(\eps{+}\omega^+{+}\eps k^+)\leq 1/\eps$  and $r\gneqq
3$. By H\"older's and Young's inequality we find
$c_2>0$ such that 
\[
 \int_\Omega \frac{k^+}{\eps{+}\omega^+{+}\eps
   k^+}\,\big|\bs{D}(\bs{u})\big|^2k 
 \leq 
 \frac1\eps \int_\Omega \big|\bs{D}(\bs{u})\big|^2k 
\leq 
\frac\eps 2\int_\Omega\big|\bs{D}(\bs{u})\big|^r+\frac\eps
2\int_\Omega |k|^r+c_2,
\]
where the constant $c_2$ depends on $\eps>0$, $r>3$, and
$\operatorname{vol}(\Omega)$. Inserting this into \eqref{A1.12} and
using Korn's inequality in $\bs W^{1,r}(\Omega)$ we have established
\eqref{eq:A.Assum.b} for $p=r$.  
\medskip

\underline{Step 3. Strong convergence:} In the remaining two 
steps we consider a sequence $U_m=(\bs u_m,\omega_m,k_m)$ satisfying
the assumptions in condition \eqref{eq:A.Assum.c}, namely
\begin{align}
\label{eq:PsMo.Ass}
\text{(a) } \ U_m \weak U \ \text{ in } \bfV \qquad \qquad 
\text{(b) } \limsup_{m\to \infty} \:\langle A(U_m),U_m{-}U\rangle \leq 0.
\end{align}
In this step we first show that this implies the strong convergence
$U_m\to U$ in $\bfV$, and in Step 4 we deduce the liminf estimate for
\eqref{eq:A.Assum.c}. 

Combining parts (a) and (b) of \eqref{eq:PsMo.Ass} we immediately
obtain 
\begin{align}
\label{eq:Duality}
 \limsup_{m\to \infty} \big\langle A(U_m)-A(U)\,, \, U_m-U\big\rangle
\leq 0 . 
\end{align}
We decompose these duality products into ten separate integrals,
namely  
\begin{align}
 \label{A1.16}
&\big\langle A(U_m)-A(U) , U_m-U\big\rangle
=\sum_{j=1}^{10} K_{j,m} \\
\nonumber &
:=\int_\Omega\big[\bs u_{m}{\cdot}\nabla \bs u_m {-} \bs u{\cdot}\nabla
          \bs{u}\big]\cdot(\bs{u}_m{-}\bs{u})
+\!\int_\Omega\Big[\frac{k_m^+}{\eps{+}\omega_m^+}\,\bs{D}(\bs{u}_m)- 
  \frac{k^+}{\eps{+}\omega^+}\,\bs{D}(\bs{u})\Big] {:}
   \bs{D}(\bs{u}_m{-}\bs{u})\nonumber\\ 
 &\quad
 +\int_\Omega(\bs{u}_m{\cdot}\nabla\omega_m-\bs{u}{\cdot}\nabla\omega)
 \,(\omega_m{-}\omega)
+\int_\Omega\Big[\frac{k_m^+} {\eps{+}\omega_m^+} \, 
 \nabla\omega_m-\frac{k^+}{\eps{+}\omega^+}\, \nabla\omega\Big]
 \cdot\nabla(\omega_m{-}\omega)\nonumber\\ 
&\quad 
  +\int_\Omega(\omega_m^+\omega_m-\omega^+\omega)(\omega_m{-}\omega)
  +\int_\Omega(\bs{u}_m\cdot\nabla k_m-\bs{u}\cdot\nabla k)(k_m{-}k)\nonumber\\
&\quad
  +\int_\Omega\Big[\frac{k_m^+}{\eps{+}\omega_m^+}\,\nabla
    k_m-\frac{k^+}{\eps{+}\omega^+}\,\nabla k\Big]\cdot\nabla(k_m{-}k)
  +\int_\Omega(k_m\omega_m^+-k\omega^+)(k_m{-}k)\nonumber
\\
 &\quad 
  -\int_\Omega\Big(\frac{k_m^+}{\eps{+}\omega_m^+{+}\eps k_m^+}\, 
   \big|\bs{D}(\bs{u}_m)\big|^2-\frac{k^+}{\eps{+}\omega^+{+}\eps
     k^+}\,\big|\bs{D}(\bs{u})\big|^2\Big)(k_m-k)\nonumber\\
 &\quad +\int_\Omega\eps\bigg[\big( \Phi_r(\bs{D}(\bs{u}_m))
 - \Phi_r(\bs D(\bs u)) \big){:}\bs D(u_m{-}\bs u) +
  \big(\Phi_r(\bs u_m) - \Phi_r(\bs u)\big) {\cdot}(\bs u_m{-}\bs u)\nonumber
 \\
&\qquad\qquad\quad  
+\big(\Phi_r(\nabla\omega_m)-\Phi_r(\nabla\omega)\big) \cdot
  \nabla(\omega_m{-}\omega)+
  \big(\Phi_r(\omega_m)-\Phi_r(\omega)\big)(\omega_m{-}\omega)\nonumber
\\
&\qquad\qquad\quad  
+\big(\Phi_r(\nabla k_m)-\Phi_r(\nabla k)\big)\cdot
  \nabla(k_m{-}k)+\,
  \big(\Phi_r(k_m)-\Phi_r(k)\big)\,(k_m{-}k) \; \bigg],\nonumber
\end{align}
where $\Phi_r(\bs \xi):=|\bs \xi|^{r-2}\bs \xi$. The last term $K_{10,m}$
can be used to control $U_m-U$ in the norm of $\bfV$ by 
using the estimate 
\[
 \big(\Phi_r(\bs\xi)-\Phi_r(\bs\eta)\big)\cdot(\bs\xi{-}\bs\eta)\geq 
2^{2-r}\big|\bs\xi-\bs\eta\big|^r\quad\text{for all }\bs\xi,\bs\eta 
\in\mathbb{R}^N, 
\]
see \cite{Ref17} for the derivation of the exact constant. 
In particular, we find 
\begin{align}
\label{eq:K10}
 K_{10,m} \geq \eps 2^{2-r} \big\| U_m- U\big\|_{\bfV}^r ,
\end{align}
and the strong convergence $U_m\to U$ follows if we show $\limsup_{m\to \infty}
K_{10,m}\leq 0$.  

By \eqref{eq:Duality} we control the limsup of $\sum_1^{10} K_{j,m}$
and hence obtain 
\begin{align*}
\limsup_{m\to \infty} K_{10,m}& =
\limsup_{m\to \infty} \Big(\sum_{j=1}^{10}K_{j,m} -
\sum_{l=1}^9K_{l,m} \Big) \\
& \leq \limsup_{m\to
  \infty}\sum_{j=1}^{10}K_{j,m} - \liminf_{m\to \infty}
\sum_{l=1}^9K_{l,m} 
\overset{\text{\eqref{eq:Duality}}}\leq 0 \ -\ \sum_{l=1}^9  \liminf_{m\to
  \infty} K_{l,m} .
\end{align*}
Thus, it suffices to show $\liminf_{m\to \infty} K_{l,m}\geq 0 $ for
$l\in \{1,...,9\}$. To do so, we use $U_m \weak U$ (i.e.\
(\ref{eq:PsMo.Ass}a)), which by $r>3$ and the compact embedding
$W^{1,r}(\Omega) \Subset C^0(\overline\Omega)$ implies
\begin{align}
\label{eq:UniformCvg}
\bs u_m \to \bs u,\quad \omega_m\to \omega, \quad k_m \to k\quad 
\text{uniformly in } \overline\Omega.
\end{align}

For treating $K_{1,m}$ we use integration by parts and $\diver \bs
u_m=\diver \bs u=0$ to find 
\[
K_{1,m}=\int_\Omega \big( \diver(\bs u_m{\otimes} \bs u_m) : \nabla
\bs u - \bs u{\cdot}\nabla \bs u \cdot \bs u_m\big) \ \to \
\int_\Omega \big( \diver(\bs u{\otimes} \bs u) : \nabla \bs u - \bs
u{\cdot}\nabla \bs u \cdot \bs u\big) \ = \ 0,
\]
because of the uniform convergence $\bs u_m \to \bs u$.

Similarly, the other convective terms $K_{3,m}$ and $K_{6.m}$
converge to $0$, since $\omega_m\to \omega$ and $k_m \to k $ converge
uniformly.

For the second term $K_{2,m}$ we again use the uniform convergence in
the decomposition
\[ 
K_{2,m}=
\int_\Omega
\big(\frac{k_m^+}{\eps{+}\omega_m^+}-\frac{k^+}{\eps{+}\omega^+}\big)    
\,\bs{D}(\bs{u}_m) :\bs{D}(\bs{u}_m{-}\bs{u})
+
\int_\Omega\frac{k^+}{\eps{+}\omega^+}\, \bs{D}(\bs{u}_m{-}\bs{u}) 
   :\bs{D}(\bs{u}_m{-}\bs{u}).
\]
The first integral converges to $0$ as the two terms involving $\bs D$
are bounded in $\bs L^r(\Omega)\subset \bs L^2(\Omega)$ while the
prefactor converges to $0$ uniformly. The second integral is
non-negative, hence $\liminf\limits_{m\to \infty} K_{2,m}\geq 0 $ follows. 
Analogously, the \,$\liminf\limits_{m\to\infty}$ \,of $K_{4,m}$ and
$K_{7,m}$ is non-negative. 

By uniform convergence of the integrands we easily obtain $K_{5,m}\to
0$ and $K_{8,m}\to 0$. 

In $K_{9,m}$ the integrand is a product of a function bounded
uniformly in $L^{r/2}(\Omega)$ and $k_m{-}k$, which converges
uniformly to $0$; hence $K_{9,m}\to 0$ as well. 

This finishes the proof of Step 3 guaranteeing $U_m\to U$ in
$\bfV$.  \medskip

\underline{Step 4. $A$ is pseudo-monotone:} For the sequence $U_m$
satisfying \eqref{eq:PsMo.Ass}  we have to show
\begin{align}
  \label{eq:DualityLiminf}
 \langle A(U),U{-}V\rangle \leq \liminf_{m\to\infty}
\langle A(U_m),U_m{-}V\rangle  \text{ for all }V=(\bs v,
\varphi,w)\in \bfV 
\end{align}
By Step 3 we are now able to use the strong
convergence $U_m\to U$. 
Again we split the duality-product term into ten parts and treat the
parts separately:  \vspace{-0.4em}
\begin{align}
 \label{A1.18}
 & \big\langle {A}(U_m),U_m-V\big\rangle = \sum_{j=1}^{10}
 G_{j,m}\\[-0.4em] \nonumber
& =:\int_\Omega \bs u_m{\cdot} \nabla \bs{u}_m\cdot(\bs{u}_m{-}\bs{v})
 +\int_\Omega\frac{k_m^+}{\eps{+}\omega_m^+}\,\bs{D}(\bs{u}_m): 
       \bs{D}(\bs{u}_m{-}\bs{v})
\\ \nonumber
& \quad 
 +\int_\Omega\bs{u}_m{\cdot}\nabla\omega_m \, (\omega_m{-}\varphi)
 +\int_\Omega\frac{k_m^+}{\eps{+}\omega_m^+}\,\nabla\omega_m\cdot 
     \nabla(\omega_m{-}\varphi)\nonumber\\
 & \quad
 +\int_\Omega\omega_m^+\omega_m(\omega_m{-}\varphi)
 +\int_\Omega \bs{u}_m{\cdot}\nabla k_m\,(k_m{-}w)
 +\int_\Omega\frac{k_m^+}{\eps{+}\omega_m^+}\,\nabla k_m\cdot 
   \nabla(k_m{-}w) 
\nonumber\\ & \quad
  +\int_\Omega k_m\omega_m^+(k_m{-}w) 
 -\int_\Omega\frac{k_m^+}{\eps{+}\omega_m^+{+}\eps k_m^+}\,\big|\bs{D}(\bs{u}_m)\big|^2(k_m{-}w)\nonumber\\
 & \quad
 +\int_\Omega
 \eps\Big(\Phi_r(\bs{D}(\bs{u}_m)):\bs{D}(\bs{u}_m{-}\bs{v}) 
  +\Phi_r(\bs{u}_m)\cdot(\bs{u}_m{-}\bs{v})
  +\Phi_r(\nabla\omega_m)\cdot\nabla(\omega_m{-}\varphi) \nonumber
\\ & \qquad\qquad \qquad 
  +\Phi_r(\omega_m)(\omega_m{-}\varphi)
  +\Phi_r(\nabla k_m)\cdot\nabla(k_m{-}w)+\Phi_r(k_m)(k_m{-}w)\Big).\nonumber
\end{align}
Using the uniform convergence of $U_m$ (see \eqref{eq:UniformCvg}) and
the strong convergence in $L^r(\Omega)$ of the derivatives $\nabla
U_m$ it is straight forward to see that the integrals $G_{j,m}$ for
$j\in \{1,...,9\}$ converge to their respective limits. For $G_{10,m}$
we can use the estimate
\[
 \big|\Phi_r(\bs\xi)-\Phi_r(\bs\eta) \big| \leq 3r
 \big(|\bs\xi|+|\bs\eta|\big)^{r-2}\big|\bs\xi -\bs\eta \big| \quad
\text{for all }\bs\xi,\bs\eta \in \mathbb R^N, 
 \] 
see \cite[exerc.\,10.a, p.\,257]{Ref4}. Thus, we conclude 
that \eqref{eq:DualityLiminf} holds, even with equality.

Hence, all the assumptions in \eqref{eq:A.Assum} are established,
Theorem \ref{thm:A} is applicable, and the proof of
Proposition  \ref{p4.1} is complete. 
\end{proof}

\begin{remark}\label{rem:OldProof}
An alternative proof for Proposition \ref{p4.1} is given in the first
draft \cite{MieNau18arXiv} of the present work. That proof is based on the
method of elliptic regularization of abstract evolution equations,
cf.\,\cite[Ch.\,3, Thm.\,1.2]{Ref18}.\vspace{-0.3em}
\end{remark}

\paragraph*{Acknowledgments.} 
The authors are indebted to H. Z. Baumert (IAMARIS) for
helpful discussions on physical aspects of Kolmogorov's two-equation model
of turbulence. A.M. was partially supported by Deutsche Forschungsgemeinschaft
(DFG) via the project A5 ``Pattern formation in coupled parabolic
systems'' of the Collaborative Research Center SFB\,910 \emph{Control of
Complex Nonlinear Systems}, project no.\,163436311. The authors are grateful to
anonymous referees for useful remarks which led to improvements upon
earlier versions of the paper.
\vspace{-0.3em}


\begin{thebibliography}{xx}\itemsep-0.1em
%
%
\bibitem[AlV02]{Ref1} Alexandre, R.; Villani, C.: On the Boltzmann equation
  for long-range interactions. Commun. Pure Appl. Math. 55, 30--70
  (2002).

\bibitem[BaP12]{BarPre12} Barbu, V.; Precupanu, B.: \emph{Convexity and
  Optimization in Banach Spaces}. Springer Verlag (4th ed), 2012.

\bibitem[Bar93]{Bare93SLFD} Barenblatt, G.: Scaling laws for fully
  developed turbulent shear flows. Part 1. Basic hypotheses and
  analysis. J. Fluid Mech. 248, 513-520 (1993).
  
\bibitem[Bau13]{Ref2} Baumert, H.Z.: Universal equations and constants of
  turbulent motion. Phys. Scr. T155, 014001 (12 pp.) (2013).

\bibitem[BLM11]{BuLeMa11} Bul\'\i\v cek, M.; Lewandowski, R.; M\'alek, J.: On
  evolutionary Navier--Stokes--Fourier type systems in three spatial
  dimensions. Comment. Math. Uni. Carolin. 52, 89--114 (2011). 

\bibitem[BoG89]{BocGal89NEPE}
Boccardo, L.;  Gallou\"et, Th.: 
Nonlinear elliptic and parabolic equations involving measure data.
J. Funct. Analysis 87:1, 149--169 (1989).

\bibitem[BD*97]{BoDaOr97}
 Boccardo, L.; Dall'Aglio, A.; Gallou\"et, Th.; Orsina, L.: 
 Nonlinear parabolic equations with measure data.
 J. Funct. Analysis 147:1, 237--258 (1997).

%
\bibitem[Bou65]{Ref4} Bourbaki, N.: \emph{\'El\'ements de math\'ematique. Livre
  VI.} Int\'egration 1--4, Hermann, Paris 1965.

\bibitem[Bre73]{Ref5} Br\'ezis, H.: \emph{Op\'erateurs maximaux monotones et
  semi-groupes de contractions dans les espaces de
  Hilbert.} North-Holland Publ. Comp., Amsterdam 1973.

\bibitem[BFM09]{BuFeMa09} Bul\'{\i}\v{c}ek, M.; Feireisl, E.; M\'alek, J.: A
  Navier-Stokes-Fourier system for incompressible fluids with temperature
  dependent material coefficients.  Nonlinear Anal. Real World Appl. 10:2, 992--1015
  (2009).

\bibitem[{BuM19}]{Ref6} Bul\'{\i}\v{c}ek, M.; M\'alek, J.: Large data analysis for
  Kolmogorov's two-equation model of turbulence. Nonlinear Analysis Real World
  Appl. 50, 104--143 (2019).

\bibitem[ChI94]{ChoIoo94CTP} Chossat, P.; Iooss, G.: \emph{The {C}ouette-{T}aylor
  problem.} New York: Springer-Verlag 1994.

\bibitem[ChL14]{ChaLew14MNFT} Chac\'on Rebello, T.; Lewandowski, R:
  \emph{Mathematical and Numerical Foundations of Turbulence Models and
  Applications.} Birkh\"auser's Modeling and Simulation in Science,
  Engineering and Technology series, Springer, New-York 2014.

\bibitem[Dav04]{Ref8} Davidson, P.A.: \emph{Turbulence. An introduction for
  scientists and engineers.} Oxford Univ. Press, Oxford 2004.

\bibitem[Dro01]{Ref9} Droniou, J:. Int\'egration et espaces de Sobolev
  \`a valeurs vectorielles. Preprint
  2001. \texttt{https://hal.archives-ouvertes.fr/hal-01382368/}

\bibitem[FeM06]{FeiMal06} Feireisl, E.; M\'alek, J.: On the Navier-Stokes
equations with temperature-dependent transport coefficients,
Differ. Equ. Nonlinear Mech. 2006, Art. ID 90616, 14 pp., (2006).

\bibitem[FMRT01]{FMRT01NSET}  
Foias, C.; Manley, O.; Rosa, R.; Temam, R.: Navier-Stokes equations and
turbulence. Cambridge University Press, 2001.

\bibitem[Fri04]{Ref10} Frisch, U.: \emph{Turbulence. The legacy of
  A.N. Kolmogorov} (2nd ed.), Cambridge Univ. Press, Cambridge 2004.

\bibitem[Har06]{Ref11} Harpes, P.: Bubbling of approximations for the 2-D
  Landau-Lifschitz flow. Comm. Partial Differential Equations 31,
  1--20 (2006).

\bibitem[KaW97]{KagWah97ECCR} Kagei, Y.; von Wahl, W.: The {E}ckhaus
  criterion for convection roll solutions of the
  {O}berbeck-{B}oussinesq equations. Int. J. Non-Linear Mech., 32,
  563--620 (1997).

\bibitem[KLP20]{KlObPl20STM}
Klingenberg, D.; Oberlack, M.;  Pluemacher, D.: 
Symmetries and turbulence modeling.
Physics of Fluids 32, 025108/18\:pp. (2020). 

\bibitem[Kol42]{Ref14} Kolmogorov, A.N.: The equations of turbulent motion of an
  incompressible viscous fluid (Russian). Izv. Akad. Nauk SSSR,
  Ser. Fiz. 6, 56--58 (1942) (English translations in:
 \cite[pp.\,214--216]{Ref25}, \cite[pp.\,328-330]{Ref27}).

\bibitem[LaL91]{Ref15} Landau, L.D.; Lifschitz, E.M.: \emph{Lehrbuch der theoretischen
  Physik. Band VI: Hydromechanik.} Akademie-Verlag, Berlin 1991.


\bibitem[LaL03]{LayLew03SSSS} 
Layton, W.; Lewandowski, R.: A simple and stable scale-similarity model for LES: 
energy balance and existence of weak Solutions. 
Applied Math. letters 16, 1205--1209 (2003). 

\bibitem[LeL07]{LedLew07RANS} 
Lederer, J.; Lewandowski, R.: A RANS 3D model with unbounded eddy
viscosities. Ann. Inst. H. Poincar\'e
Anal. non lin\'eaire 24, 413--441 (2007).

\bibitem[Lew97]{Lewa97MACT}  Lewandowski, R.: The mathematical
  analysis of the coupling of a turbulent kinetic energy equation to
  the Navier–Stokes equation with an eddy viscosity, 
Nonlinear Anal. 28, 393--417 (1997).


\bibitem[Lew06]{Lewa06VLES}  Lewandowski, R.: Vorticities in a LES model for 3D
  periodic turbulent flows. J. Math. Fluid. Mech. 8, 398–422, (2006).

\bibitem[Lin06]{Ref17} Lindqvist, P.: Notes on the $p$-Laplace
  equation. Report at Univ. Jyv\"askyl\"a, Department Math. Statistics,
  102, Jyv\"askyl\"a, 80 pp. (2006). Available in pdf-format:  
  \texttt{https://folk.ntnu.no/lqvist/p-laplace.pdf}.

\bibitem[Lio69]{Ref18} Lions, J.L.: \emph{Quelques m\'ethodes de r\'esolution des
  probl\`emes aux limites non lin\'eaires.} Dunod, Gauthier-Villars,
  Paris 1969.

\bibitem[LLZ95]{Ref16} Lin, F.-H.; Liu, C.; Zhang, P.: On hydro-dynamics of
  viscoelastic fluids. Commun. Pure Appl. Math. 58, 1437--1471 (2005).

\bibitem[Mie21]{Miel21?TCDP}
Mielke, A.: On two coupled degenerate parabolic equations 
            motivated by thermodynamics, Preprint 2021, arXiv:2112.08049.

\bibitem[MiN15]{Ref20} Mielke, A.; Naumann, J.: Global-in-time existence of
  weak solutions of Kolmogorov's two-equation model of
  turbulence. C. R. Acad. Sci. Paris, Ser. I 353, 321--326 (2015).

\bibitem[MiN18]{MieNau18arXiv} Mielke, A.; Naumann, J.: On the
  existence of global-in-time weak solutions and scaling laws for
  Kolmogorov’s two-equation model of turbulence. Preprint January 6,
  2018, arXiv:1801.02039.

\bibitem[MoY07]{Ref21} Monin, A.S.; Yaglom, A.M.: \emph{Statistical fluid
  mechanics.} 2 vols. Transl. of the 1965 Russian original, ed.\ by
  J.L. Lumley. Dover Publ. Mineola, New York 2007.

\bibitem[Nau08]{Ref22} Naumann, J.: On weak solutions to the equations of
  non-stationary motion of heat-conducting incompressible viscous
  fluids: defect measure and energy inequality. In: Parabolic and
  Navier-Stokes Equs. Banach Center Publ., vol. 81, Warsaw, 287--296
  (2008).

\bibitem[ObB02]{ObeBus02TT} Oberlack, M. and Busse, F. H.: \emph{Theories of
  Turbulence}.  CISM Courses and Lectures No.\ 442, Springer 2002.

\bibitem[Obe02a]{Ober02DEIT} Oberlack, M.: On the decay exponent of
  isotropic turbulence. Proc. Applied Math. Mech. 1, 294--297, (2002). 

\bibitem[Obe02b]{Ober02SIST} Oberlack, M.: Symmetries and invariant
  solutions of turbulent flows and their implications for turbulence
  modelling. In \cite{ObeBus02TT}, pages 301--366 (2002).

\bibitem[Rak91]{Rako91} Rakotoson, J.M.: Some quasilinear parabolic equations.
  Nonl. Analysis, TMA 17:12, 1163--1175 (1991).

\bibitem[Rak92]{Ref23} Rakotoson, J.: A compactness lemma for quasilinear
  problems: applications to parabolic equations. J. Funct. Anal. 106,
  358--374 (1992).

\bibitem[Rou13]{Roub13NPDE} Roub{\'\i}{\v{c}}ek, T.: \emph{Nonlinear Partial
    Differential Equations with Applications} (2nd ed.), Birkh\"auser Verlag
  2013.

\bibitem[Sim87]{Ref24} Simon, J.: Compact sets in the spaces
  $L^p(0,T;B)$. Annali Mat. Pura Appl. 146, 65--96 (1987).


\bibitem[Spa91]{Ref25} Spalding, D.B.: Kolmogorov's two equation model of
  turbulence. Proc. R. Soc. Lond., Ser. A, 434, 211--216 (1991).



\bibitem[Tik91]{Ref27} Tikhomirov, V.M. (ed.): \emph{Selected works of
  A.N. Kolmogorov.} Vol. I, Kluwer Acad. Publ., Dordrecht 1991.

\bibitem[Vig10]{Ref28} Vigneron, F.: Free turbulence on $\Bbb{R}^3$ and
  $\mathbb{T}^3$. Dynamics of PDE 7, 107--160 (2010).

\bibitem[Wil06]{Ref29} Wilcox, D.C.: \emph{Turbulence modeling for CFD.} DCW
  Industries, La Ca\={n}ada 2006.

\bibitem[Yag94]{Ref30} Yaglom, A.M.: A.N. Kolmogorov as a fluid mechanician
  and founder of a school in turbulence. Annu. Rev. Fluid Mech. 26,
  1--22 (1994).

\end{thebibliography}




\allowdisplaybreaks[4]

\end{document}